\numberwithin{equation}{section}
\theoremstyle{plain}
\newtheorem{theorem}{Theorem}[section]
\newtheorem{corollary}[theorem]{Corollary}
\newtheorem{lemma}[theorem]{Lemma}
\newtheorem{proposition}[theorem]{Proposition}
\theoremstyle{definition}
\newtheorem{definition}[theorem]{Definition}
\newtheorem{remark}[theorem]{Remark}
\newtheorem{example}[theorem]{Example}
\newcommand{\Z}{\mathbb{Z}}
\newcommand{\Q}{\mathbb{Q}}
\newcommand{\R}{\mathbb{R}}
\newcommand{\C}{\mathbb{C}}
\newcommand{\F}{\mathbb{F}}
\newcommand{\sL}{\mathcal{L}}
\newcommand{\sO}{\mathcal{O}}
\newcommand{\sM}{\mathcal{M}}
\newcommand{\sN}{\mathcal{N}}
\newcommand{\sX}{\mathfrak{X}}
\newcommand{\ord}{\operatorname{ord}}
\newcommand{\Pic}{\operatorname{Pic}}
\newcommand{\Jac}{\operatorname{Jac}}
\newcommand{\Hom}{\operatorname{Hom}}
\newcommand{\Spec}{\operatorname{Spec}}
\newcommand{\coker}{\operatorname{coker}}
\newcommand{\id}{\operatorname{id}}
\newcommand{\Gr}{\operatorname{Gr}}
\newcommand{\alg}{\operatorname{alg}}
\newcommand{\an}{\operatorname{an}}
\newcommand{\red}{\operatorname{red}}
\newcommand{\Supp}{\operatorname{Supp}}
\renewcommand{\int}{\operatorname{int}}
\renewcommand{\tt}{\frac{1}{T}}
\newcommand{\Maya}{\underline{\operatorname{Maya}}}
\newcommand{\M}{\underline{M}}
\newcommand{\Par}{\underline{\operatorname{Par}}}
\begin{document}

%%%%%%%%%%%%%%%%%%%%%%%%%%%%%%%%%
%%%%%%%%%%%%%%%%%%%%%%%%%%%%%%%%%

\title{Torsion points on Jacobian varieties
\\
via Anderson's $p$-adic soliton theory}

%\dedicatory{To Professor Tatsuo Kimura on the
%occasion of his sixtieth birthday}

%%%%%%%%%%
%%%%%%%%%% 

\author{Shinichi Kobayashi \and Takao Yamazaki}
\date{\today}
\address{Mathematical Institute, Tohoku University,
  Aoba, Sendai 980-8578, Japan}
\email{shinichi@math.tohoku.ac.jp}
\email{ytakao@math.tohoku.ac.jp}

\begin{abstract}
Anderson introduced a $p$-adic version of soliton theory.
He then applied it to the Jacobian variety of 
a cyclic quotient of a Fermat curve
and showed that torsion points of certain prime order
lay outside of the theta divisor. 
In this paper, we evolve his theory further.
As an application,
we get a stronger result on the intersection of the 
theta divisor and torsion points on the Jacobian variety
for more general curves. 
New examples are discussed as well.
A key new ingredient is a map connecting
the $p$-adic loop group and the formal group.
\end{abstract}

\keywords{Sato Grassmannian, $p$-adic tau function,
$p$-adic loop group, formal group}
%\subjclass[2010]{}
\thanks{The authors are supported by Inamori Foundation.
The second author is supported 
by Grant-in-Aid for Challenging Exploratory Research (24654001) and
Grant-in-Aid for Young Scientists (A) (22684001)}
\maketitle

\section{Introduction}

In \cite{Anderson}, G. Anderson
introduced a striking new machinery
to investigate the Jacobian variety 
and theta divisor of an algebraic curve,
which he called the {\it $p$-adic soliton theory}.
He then applied his theory 
to the Jacobian variety of 
a cyclic quotient of a Fermat curve and
showed that torsion points of certain prime order
lay outside of the theta divisor
(see Thm. \ref{thm:anderson} and Rem. \ref{rem:main1+} below for details). 
His proof can be divided into two parts:
\begin{enumerate}
\item
A description of 
the Jacobian variety and theta divisor
in terms of the ($p$-adic) Sato Grassmannian, tau function
and loop group.
This part is applicable for a more general class of curves.
\item
Explicit construction of
good elements of the Sato Grassmannian and loop group.
This part involves heavy computation 
depending on the defining equation of the curve.
No attempt has been made
(as far as the authors know)
to carry out this part
for other classes of curves, except \cite{MY}. 
\end{enumerate}

The main purpose of the present paper is
to develop a more general and refined theory for (2).
In \S \ref{sect:cp-anderson} below,
we state our result for a cyclic quotient of Fermat curves,
which explains how our theory refines Anderson's result.
Then in \S \ref{subsect:mainresult},
after introducing more notations,
we will state our main result, 
which explains how general our theory is.
Results in \S \ref{sect:cp-anderson} will be deduced
from results in \S \ref{subsect:mainresult} as a special case.
We will see our main result can be applied to 
many new examples in \S \ref{sect:other-ex}.

A key step in our theory is a map 
\eqref{eq:formal-loop} connecting
the $p$-adic loop group and the formal group,
which is constructed using the theory of 
the Hasse-Witt matrix and Artin-Hasse exponential.
We expect this new map could be useful
for other purpose.

\subsection{Cyclic quotient of Fermat curves}\label{sect:cp-anderson}
Let $p$ be a prime and $K$ a finite extension of $\Q_p$.
Let $d$ be an odd prime such that $p \equiv 1 \mod d$
and let $a \in \Z$ be such that $1<a<l$.
We consider the smooth projective model $X$ 
of an affine curve over $K$
defined by 
\begin{equation}\label{eq:curve}
 y^d=x^a(x-1)^{d+1-a}.
\end{equation}
The genus of $X$ is $g=(d-1)/2$.
Let $\infty$ be the unique point on $X$ 
which does not lie above 
the affine curve \eqref{eq:curve}.
The Jacobian variety $\Jac(X)$ and theta divisor $\Theta$ of $X$ 
are
defined as the group of isomorphism classes
of invertible sheaves on $X$ of degree zero
and
\begin{equation}\label{eq:def-of-theta} 
\Theta := \{ \sL \in \Jac(X) ~|~ H^0(X, \sL((g-1)\infty)) \not= 0 \}
\subset \Jac(X)
\end{equation}
respectively.
For any $n \in \Z_{>0}$, 
we denote by $\Jac(X)[p^n]$
the subgroup of $\Jac(X)$ consisting of the elements
of order divisible by $p^n$.
Fix a primitive $d$-th root of unity $\zeta_d \in K^*$
which actually belongs to $\Z_p \subset K$
by the assumption $p \equiv 1 \mod d$.
We define
\begin{equation}\label{eq:def-of-jac-p-primary-torsion}
\Jac(X)[p^n]_1 := \{ \sL \in \Jac(X)[p^n] 
~|~ \delta^*(\sL)=\zeta_d \sL \},
\end{equation}
where $\zeta_d \sL$ makes sense 
since $\Jac(X)[p^n]$ is a $\Z_p$-module.
We have
$\Jac(X)[p^n]_1 \cong \Z/p^n\Z$
if $K$ is sufficiently large
(see Prop. \ref{lem:jactorsion}).
A part of Anderson's result is the following:

\begin{theorem}[Anderson \cite{Anderson}]\label{thm:anderson}
We have 
$\Jac(X)[p]_1 \cap \Theta = \{ 0 \}.$
\end{theorem}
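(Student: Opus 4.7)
The plan is to translate the statement into a non-vanishing question for Anderson's $p$-adic tau function. Via the Sato Grassmannian description (part (1) of Anderson's framework), the Jacobian $\Jac(X)$ is identified, on $p$-primary torsion, with classes in a $p$-adic loop group modulo a suitable subgroup, and the theta divisor $\Theta$ corresponds to the zero locus of a $p$-adic tau function $\tau$. The condition on line bundles defining $\Theta$ via $H^0(X,\sL((g-1)\infty))\neq 0$ becomes, in Grassmannian language, a Plücker-type vanishing condition. Thus for a non-trivial $\sL \in \Jac(X)[p]_1$, proving $\sL \notin \Theta$ reduces to exhibiting a loop group representative $g_\sL$ of $\sL$ with $\tau(g_\sL)\neq 0$.

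Second, I would exploit the automorphism $\delta\colon (x,y)\mapsto(x,\zeta_d y)$ of $X$. The eigenspace condition $\delta^*\sL=\zeta_d\sL$ forces $g_\sL$ to lie in the $\zeta_d$-isotypic component under the induced action of $\delta^*$ on the loop group, which is a much smaller and more explicit subgroup. Because $p\equiv 1\pmod d$, the character theory of $\mu_d$ is available over $\Z_p$ and the eigenspace decomposition is genuinely $p$-adic, so one does not lose information when restricting to $\Jac(X)[p]_1$.

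Third, for a non-trivial $\sL \in \Jac(X)[p]_1$, I would construct an explicit loop group representative $g_\sL$ by lifting $\sL$ through the formal group of $\Jac(X)$ along the cuspidal point $\infty$. Concretely one writes down a local parameter at $\infty$, reads off a local section of a chosen degree-$(g-1)$ theta-bundle twisted by $\sL$, and expands it in the loop parameter. This produces $g_\sL$ as an explicit power series whose coefficients are polynomial expressions in the coordinates of $\sL$ viewed as a point of the formal group, with integrality controlled by the defining equation $y^d=x^a(x-1)^{d+1-a}$.

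Finally, the main obstacle is to show $\tau(g_\sL)\neq 0$. Since $\tau$ is essentially an infinite $p$-adic determinant (or Fredholm determinant) on the Grassmannian, one must identify its leading $p$-adic term. The approach is to choose a basis of the ambient $p$-adic vector space adapted to the $\delta^*$-eigenspace decomposition, reduce the infinite determinant modulo $p$ to a finite combinatorial sum involving binomial-type coefficients that depend on $a$ and $d$, and verify that this sum is a unit in $\Z_p$. The arithmetic hypotheses — $d$ an odd prime, $p\equiv 1\pmod d$, and $1<a<d$ — are precisely what ensures this non-vanishing mod $p$. This explicit leading-term computation, rather than the Grassmannian formalism, is the hard core of the argument; it is this step that Anderson's paper carries out in detail and that the present paper aims to generalize via the new formal-group/loop-group map.
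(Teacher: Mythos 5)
Your outline correctly names the overall architecture of Anderson's argument (Krichever correspondence, restriction to the $\zeta_d$-eigenspace, non-vanishing of the $p$-adic tau function), but the two steps that actually carry the proof are either misdescribed or absent, so this remains a plan rather than a proof. The first gap is the construction of the loop-group representative. You propose to obtain $g_\sL$ by expanding a local section of a degree-$(g-1)$ theta-bundle twisted by $\sL$ at $\infty$. What is actually needed is an element $h(T)\in\Gamma_+$, i.e.\ a power series with constant term $1$ whose coefficients satisfy $|h_i|\le\rho^i$ for some $\rho<1$, such that $h(T)A$ represents a nonzero point of $\Jac(X)[p]_1$ and $h(T)^p$ becomes trivial in $\Gamma_A=\Gamma/((A^{\an})^*\cap\Gamma)\Gamma_0$. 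Nothing in ``expand a section of a theta-bundle'' yields this overconvergence, and overconvergence is the whole point: it is what places $h$ in the domain where the Sato expansion of $\tau$ is valid. The actual mechanism is the Dwork exponential $\exp(\gamma(uT-(uT)^p))$ with $\gamma^{p-1}=-p$ (equivalently, the Artin--Hasse exponential $h(T;\pi)=\exp(\sum_k\pi^{p^k}T^{p^k}/p^k)$ at a root $\pi$ of $l_1(X)=\sum_k(e_{11}^{(k)}/p^k)X^{p^k}$). The $p$-torsion and eigenspace properties come from the congruence $T^{p}\equiv e_{11}^{(1)}T$ modulo $A\oplus\tt O_K[[\tt]]$ together with $l_1(\pi)=0$, and the existence of $p-1$ roots with $|\pi|=|p|^{1/(p-1)}$ (which, by a counting argument against $|\Jac(X)[p]_1|=p$, exhaust the nonzero points of the eigenspace) rests on $e_{11}^{(1)}=\binom{mb}{m}$ being a $p$-adic unit, where $m=(p-1)/d$ and $b=d+1-a$. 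This is where your ``binomial coefficients depending on $a$ and $d$'' actually belong: in producing and sizing the torsion point, not in evaluating $\tau$.

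The second gap is the non-vanishing of $\tau$ itself, which is not a reduction of a determinant mod $p$ to a finite sum but an ultrametric dominance argument, and it requires two inputs you do not supply. One must first know that $A$ is \emph{strictly integral} (every standard basis vector has sup-norm $\le1$; for the Fermat quotient this is verified via $WG_\infty(X)=WG_{\overline\infty}(Y)$, using ordinarity of the reduction), which gives $|P_\lambda(A)|\le1$ for all Pl\"ucker coordinates, $P_\kappa(A)=1$ for $\kappa=\kappa(A)$, and $P_\lambda(A)=0$ unless $\lambda\ge\kappa$. Then in the Sato expansion $\tau=\sum_{\lambda}P_\lambda S_\lambda(h)$ the term at $\lambda=\kappa$ has absolute value exactly $\rho^{|\kappa|}$ by the hook-length formula, the hook lengths being $p$-adic units because $\kappa_1+l(\kappa)\le2g<d<p$ (this is where $p\equiv1\bmod d$ with $d$ an odd prime $\ge 2g+1$ enters), while every other term is strictly smaller; the ultrametric inequality then forces $\tau\neq0$. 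Without identifying strict integrality and the Schur-function/hook-length computation, the ``leading $p$-adic term'' you invoke is not under control, and the claim that the hypotheses on $a$, $d$, $p$ make it a unit has no content.
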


We will generalize this result by showing the following:

\begin{theorem}\label{thm:main1}
For any  $n \in \Z_{>0}$,
we have 
$\Jac(X)[p^n]_1 \cap \Theta = \{ 0 \}.$
\end{theorem}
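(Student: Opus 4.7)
The plan is to carry out the two-step strategy outlined in the introduction: first translate the condition $\sL \in \Theta$ into a $p$-adic vanishing condition via Anderson's Sato Grassmannian/tau function dictionary (item (1) in the introduction), and then exploit the new map \eqref{eq:formal-loop} from the formal group to the $p$-adic loop group to parametrize the torsion points in $\Jac(X)[p^n]_1$ explicitly and verify non-vanishing (item (2)).

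Concretely, I would fix a local parameter at $\infty$ and the corresponding base point $W_\infty$ of the Sato Grassmannian attached to $X$. Then $\Jac(X)$ is realized as an orbit under the $p$-adic loop group, and the defining property \eqref{eq:def-of-theta} translates to $\sL \in \Theta$ if and only if $\tau(g_\sL \cdot W_\infty) = 0$ for any loop group element $g_\sL$ whose action on $W_\infty$ represents $\sL$. Using the cyclic isomorphism $\Jac(X)[p^n]_1 \cong \Z/p^n\Z$ from Prop.\ \ref{lem:jactorsion}, I would parametrize the nontrivial elements of $\Jac(X)[p^n]_1$ by $p^n$-torsion points $t$ in the formal group of $\Jac(X)$, and then use \eqref{eq:formal-loop} to produce an explicit one-parameter family $g_t$ of loop group elements realizing this identification. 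The function $t \mapsto \tau(g_t \cdot W_\infty)$ thus becomes a $p$-adic analytic function on the formal group, and the claim reduces to showing that it does not vanish at any nontrivial $p^n$-torsion $t$.

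The main obstacle is this last non-vanishing assertion, which is substantially stronger than Anderson's $n=1$ case. For $n=1$, Anderson's proof of Thm.\ \ref{thm:anderson} reduces to checking non-vanishing at the unique nontrivial $\zeta_d$-eigen-orbit via an explicit Jacobi sum computation. For arbitrary $n$, one needs instead a sharp control on the $p$-adic valuation of $\tau(g_t \cdot W_\infty)$ as $t$ varies over torsion of order $p^n$. The Artin-Hasse exponential and the Hasse-Witt matrix entering the construction of \eqref{eq:formal-loop} should supply exactly the structural information required to extract the leading term of $\tau(g_t\cdot W_\infty)$ as a $p$-adic power series in $t$; a Newton polygon argument will then yield the desired non-vanishing, provided that the Hasse-Witt-type invariant appearing as the leading coefficient is a $p$-adic unit, a property that in the Fermat-quotient setting reduces to a classical congruence for Jacobi sums and so already underlies Anderson's original result. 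In this way, Thm.\ \ref{thm:main1} should follow as a clean strengthening of Thm.\ \ref{thm:anderson} powered by the new map between the formal group and the loop group.
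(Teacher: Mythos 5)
Your overall architecture is the same as the paper's: realize the elements of $\Jac(X)[p^n]_1$ as classes $[h(T;\pi)A]_A$ in the loop group, with $h(T;\pi)=\exp(\sum_k \pi^{p^k}T^{p^k}/p^k)$ the Artin--Hasse exponential and $\pi$ running over roots of the series $l_1$ built from the matrices $\mathbf{e}^{(k)}$, and then prove non-vanishing of the tau function at these classes. But the step you yourself flag as ``the main obstacle'' is exactly where the proposal has a genuine gap, and the mechanism you sketch for it is not the one that works. You propose to treat $\tau(g_t\cdot W_\infty)$ as a $p$-adic power series in $t$, extract its leading term, and run a Newton polygon argument whose success hinges on a Hasse--Witt-type leading coefficient being a unit. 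In the actual proof the Newton polygon and the Hasse--Witt unit enter elsewhere: they are applied to the logarithm $l_1(X)=\sum_k (e^{(k)}_{11}/p^k)X^{p^k}$, first to show that $l_1$ has exactly $p^n$ roots with $|\pi|\le|p|^{1/(p^n-p^{n-1})}$ --- which, against the upper bound $|\widehat{\Jac}(X)[p^n]_{1}|\le p^n$ coming from Tate's theorem and Lewittes' computation of the eigenvalues of $\delta$ on $H^0(X,\Omega^1_{X/K})$, forces the parametrization to be surjective --- and second to pin down the exact absolute value of each root. The non-vanishing of $\tau$ is then obtained by a completely different device, the Sato expansion $\tau_{A^{\an}}(h)=\sum_{\lambda}P_\lambda(A)S_\lambda(h)$ in Pl\"ucker coordinates and Schur functions. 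That argument needs (i) strict integrality of $A$, i.e.\ $WG_\infty(X)=WG_{\overline\infty}(Y)$ (verified for the Fermat quotient by an explicit fractional-part computation), which gives $|P_\lambda(A)|\le 1$, $P_{\kappa}(A)=1$, and $P_\lambda(A)=0$ unless $\lambda\ge\kappa$; and (ii) the hook-length evaluation of $S_\lambda(h(T;\pi))$, legitimate because $\kappa_1+l(\kappa)\le 2g<p$ by Clifford's theorem, which makes the single term $P_\kappa S_\kappa$ of absolute value exactly $|\pi|^{|\kappa|}$ dominate all others. None of these ingredients appears in your sketch, and the Hasse--Witt unit you want to use as ``leading coefficient'' in fact plays no role in the non-vanishing itself.

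Two further points are left unaddressed. You do not explain why $[h(T;\pi)A]_A$ is killed by $p^n$: this is not automatic from $t$ being a torsion point of the formal group, but is proved by expanding $h(T;\pi)^{p^n}$ via the decomposition $T^{p^k}=b_1^{(k)}(T)+e^{(k)}_{11}T$, using $l_1(\pi)=0$ to cancel the $T$-component, and checking that the remainder converges into $(A^{\an})^*\Gamma_0$ precisely because $|\pi|\le|p|^{1/(p^n-p^{n-1})}$. And the injectivity of $\pi\mapsto[h(T;\pi)]$ (so that distinct roots give distinct torsion points, which the counting argument requires) needs its own argument comparing graded pieces of the loop group with $(A^{\an})^*$. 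These are fixable, but together with the missing non-vanishing mechanism they mean the proposal does not yet constitute a proof.
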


\begin{remark}\label{rem:grant}
\begin{enumerate}
\item
In Thm. \ref{thm:anderson}, Anderson also proved
a similar statement for certain translates of $\Theta$
(see Rem. \ref{rem:main1+}).
We will generalize this result as well
(see Thm. \ref{thm:main1+}).
A similar remark applies to Thm. \ref{thm:main3} below too.
(see Thm. \ref{thm:stronger-main}).
\item
When $X$ is a hyperelliptic curve
(that happens if and only if $a=2, (d+1)/2, d-1$),
Thm. \ref{thm:main1}
(and the same statement for $p \equiv -1 \mod d$)
was proved by Grant \cite{Grant}.
\end{enumerate}
\end{remark}

\subsection{Main result}\label{subsect:mainresult}
In order to state our main result,
we introduce some notations and definitions.
Let $p$ be a prime and $K$ a finite extension of $\Q_p$.
Let $X$ be a smooth projective geometrically connected curve over $K$
of genus $g \geq 2$.
Suppose that $X$ admits a smooth projective model $\sX$
over the ring of integers $O_K$ in $K$.
We write $Y := \sX \otimes_{O_K} \F$ for the special fiber of $\sX$,
where $\F$ is the residue field of $K$.
Suppose also that we are given 
a distinguished $K$-rational point $\infty \in X(K)$
and  write $\overline{\infty} \in Y(\F)$ for the reduction of $\infty$.
Recall that the 
{\it Weierstrass gap sequence} of $X$ at $\infty$ 
is defined by
\begin{equation}\label{eq:nongap}
\begin{split}
 WG_{\infty}(X) &:= 
\{ n \in \Z_{\geq 0} ~|~ 
H^0(X, \sO_X(n \infty)) =
H^0(X, \sO_X((n-1) \infty)) \},
\end{split}
\end{equation}
which is a subset of $\{ 1, 2, \ldots, 2g-1 \}$
with cardinality $g$.
This definition applies to $Y$ and $\overline{\infty}$ as well.
We define $\Jac(X), \Jac(X)[p^n]$ and $\Theta$ 
as in \S \ref{sect:cp-anderson}
(see \eqref{eq:def-of-theta} and around).
The main results of this paper
are the following two theorems.
\begin{theorem}\label{thm:main3}
Let $\delta : X \to X$ be an automorphism of $X$ defined over $K$ 
such that $\delta(\infty)=\infty$.
We suppose the following conditions:
\begin{enumerate}
\item The order $d$ of $\delta$ satisfies
$d \geq 2g+1$ and $p \equiv 1 \mod d$;
\item $Y$ is ordinary;
\item $WG_{\infty}(X) = WG_{\overline{\infty}}(Y)$.
\end{enumerate}
It then turns out that, for any uniformizer $t$ at $\infty$,
the value $\zeta_d  = (t/\delta^*(t))(\infty)$
of the rational function $t/\delta^*(t)$ at $\infty$
is a primitive $d$-th root of unity
and is independent of the choice of $t$
(see \S \ref{sect:good-triv-with-auto}).
Let $n \in \Z_{>0}$ and define
$\Jac(X)[p^n]_1$ by the formula \eqref{eq:def-of-jac-p-primary-torsion}.
(We have
$\Jac(X)[p^n]_1 \cong \Z/p^n\Z$
if $K$ is sufficiently large, see see Prop. \ref{lem:jactorsion}).
Then we have
\[ \Jac(X)[p^n]_1  \cap \Theta = \{ 0\}. 
\]
\end{theorem}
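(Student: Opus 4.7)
\medskip

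\noindent\textbf{Proof proposal.}
The plan is to follow the two-part architecture of Anderson's $p$-adic soliton theory. By part (1) of that theory (which the authors say applies to a fairly general class of curves, and hence should be available under hypotheses (1)--(3)), one has a translation dictionary: a $p$-adically analytic neighborhood of $\Jac(X)$ is parametrized by a quotient involving the Sato Grassmannian and the $p$-adic loop group based at $\infty$, and the theta divisor $\Theta$ is cut out by the zero locus of a $p$-adic tau function $\tau$. Under this dictionary, Theorem \ref{thm:main3} reduces to the statement that, for every nonzero $\sL \in \Jac(X)[p^n]_1$, one has $\tau(\sL) \neq 0$. So the whole game is to evaluate $\tau$ on a controlled family of representatives for the elements of $\Jac(X)[p^n]_1$.

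First I would exploit the automorphism $\delta$ to set things up equivariantly. Hypothesis (1) forces $\delta$ to act on the cotangent space at $\infty$ by a primitive $d$-th root of unity, which explains the claim that $\zeta_d = (t/\delta^*t)(\infty)$ is a well-defined primitive $d$-th root of unity, and the same eigenvalue distribution governs the action of $\delta^*$ on $H^0(X, \Omega^1)$ via the (matching) gap sequences in hypothesis (3). This in turn cuts the whole Sato Grassmannian / loop group picture into $\delta^*$-eigenspaces, and the $\zeta_d$-eigenspace is one-dimensional in the appropriate sense — which is why $\Jac(X)[p^n]_1 \cong \Z/p^n\Z$. The next step, and the key new input of the paper, is the construction of a map
\[
 \phi \colon \widehat{G} \longrightarrow \text{(}p\text{-adic loop group)}
\]
from a suitable formal group $\widehat{G}$ into the loop group, built out of the Artin--Hasse exponential and the Hasse--Witt matrix of $Y$. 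Composing with the formal exponential of the Jacobian restricted to the $\zeta_d$-isotypical component, every element of $\Jac(X)[p^n]_1$ can be realized as $\phi(u)$ for an explicit $p^n$-torsion point $u$ of $\widehat{G}$.

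The heart of the proof is then to pull back $\tau$ along $\phi$ and study the resulting formal power series $\tau \circ \phi$ in the parameter of $\widehat{G}$. Two features will do the work. Ordinarity of $Y$ (hypothesis (2)) forces the Hasse--Witt matrix, and hence the linearization of $\phi$, to be invertible mod $p$; the Artin--Hasse exponential is designed so that $\phi$ is $p$-integral and compatible with the Frobenius, which pins down the reduction mod $p$ of $\tau\circ\phi$ to something controlled by Hasse--Witt data. Combined with the matching of gap sequences, this should identify the leading term of $\tau\circ\phi$ modulo $p$ with a nonzero polynomial on $\widehat{G}[p^n]$, thereby showing $\tau(\phi(u))\neq 0$ for $u$ a nonzero $p^n$-torsion point. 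I expect the main obstacle to be precisely this last step: one must show that the restriction of $\tau\circ\phi$ to $\widehat{G}[p^n]$ does not vanish, and the subtlety is that one is not just evaluating at a formal generic point but at actual torsion points, so the non-vanishing claim is arithmetic rather than generic. This is exactly where the Artin--Hasse exponential and the ordinarity hypothesis cooperate: they reduce the arithmetic non-vanishing to a mod-$p$ Hasse--Witt computation that the hypotheses were chosen to make clean.
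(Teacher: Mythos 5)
Your high-level architecture is indeed the paper's: Krichever correspondence, tau function criterion for $\Theta$, Artin--Hasse exponential plus Hasse--Witt data to produce loop-group representatives of torsion, ordinarity and matching gap sequences for integrality. But two steps, including the decisive one, are not carried out, and the way you sketch them would not work as written. On the realization of torsion points: you propose to compose a map $\phi$ from a formal group into the loop group with ``the formal exponential of the Jacobian restricted to the $\zeta_d$-isotypical component.'' The paper explicitly remarks that its map $\sM_K^{\oplus g} \to \Gamma_A$ of \eqref{eq:formal-loop} is \emph{not} a group homomorphism for the formal group structure, so torsion cannot be transported through it formally. Instead, $p^n$-torsion is proved directly (Prop.~\ref{prop:const}): if $l_1(\pi)=0$ and $|\pi| \le |p|^{1/(p^n-p^{n-1})}$, then $h(T;\pi)^{p^n} = \exp\bigl(p^n\sum_k \frac{\pi^{p^k}}{p^k}\, b_1^{(k)}(T)\bigr)$ with $b_1^{(k)} \in O(A)\oplus \tt O_K[[\tt]]$, and the estimate $|p^n \pi^{p^k}/p^k| < |p|^{1/(p-1)}$ places this in $(A^{\an})^*\cap\Gamma$ times $\Gamma_0$, hence trivial in $\Gamma_A$. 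Surjectivity onto $\Jac(X)[p^n]_1$ is then a counting argument: the Newton polygon of $l_1$ yields exactly $p^n$ roots of the right size (this is where $e_{11}^{(1)}\in O_K^*$, i.e.\ ordinarity, enters), while $|\Jac(X)[p^n]_1|\le p^n$ by the eigenvalue computation on $H^0(X,\Omega^1_{X/K})$ and Tate's theorem.

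The more serious gap is the non-vanishing step, which you yourself flag as the main obstacle: ``identify the leading term of $\tau\circ\phi$ modulo $p$ with a nonzero polynomial on $\widehat{G}[p^n]$.'' As stated this cannot succeed, because the relevant parameters satisfy $|\pi| = |p|^{1/(p^s-p^{s-1})} < 1$, so any mod-$p$ leading-term analysis of $\tau\circ\phi$ is blind at exactly the points you need. The paper's mechanism is instead an exact ultrametric domination inside the Sato expansion $\tau_{A^{\an}}(h) = \sum_{\lambda} P_{\lambda}(V) S_{\lambda}(h)$: strict integrality of $V$ gives $|P_{\lambda}|\le 1$, with $P_{\kappa}=1$ and $P_{\lambda}=0$ unless $\lambda\ge\kappa$ (Lemma \ref{lem:plucker}); homogeneity gives $|S_{\lambda}(h)| \le \rho^{|\lambda|}$; and the hook-length formula evaluates $|S_{\kappa}(h(T;\pi))| = \rho^{|\kappa|}$ exactly, because $\kappa_1+l(\kappa)\le 2g \le d-1 < p$ (Clifford's theorem combined with hypothesis (1)) makes every hook length a $p$-adic unit. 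The single term at $\lambda=\kappa$ then strictly dominates all others, so $\tau\neq 0$ and Theorem \ref{thm:krichever} (2d) concludes. Note that this is precisely where the inequality $d\ge 2g+1$ is used; your sketch never invokes it in the non-vanishing argument, which is a sign that the argument as proposed is missing the actual input.
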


Let
$\widehat{\Jac}(X)[p]$
be the kernel of the 
specialization map
$\Jac(X)[p] \to\Jac(Y)[p]$
(see \eqref{eq:pic-reduction}).
This is isomorphic to the group of $p$-torsion points
on the formal group $\hat{J}_X/O_K$ attached to
the Jacobian variety of $X$.
If $Y$ is ordinary and if $K$ is sufficiently large, we have 
$\widehat{\Jac}(X)[p] \cong(\Z/p\Z)^g$.

\begin{theorem}\label{thm:main2}
We suppose the following conditions:
\begin{enumerate}
\item $p \geq 2g$;
\item $Y$ is ordinary;
\item $WG_{\infty}(X)=WG_{\overline{\infty}}(Y)=\{1, 2, \ldots, g \}$.
(In other words, both $\infty$ and $\overline{\infty}$ are 
{\it non-Weierstrass points}).
\end{enumerate}
Then we have
\[  |\, \widehat{\Jac}(X)[p] \,\cap\, \Theta \,| 
    \leq p^{g-1}. 
\]
\end{theorem}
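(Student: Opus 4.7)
The plan is to invoke the $p$-adic soliton formalism of Anderson, reinterpreted through the new map \eqref{eq:formal-loop} from the $p$-adic loop group to the formal group $\hat{J}_X$, and to detect membership in $\Theta$ via the vanishing of a $p$-adic tau function. Following the framework to be developed in the body of the paper, membership $\sL \in \Theta$ is encoded by the vanishing of a tau function attached to the Sato Grassmannian point $W_\sL$. The non-Weierstrass hypothesis $WG_\infty(X) = WG_{\overline{\infty}}(Y) = \{1, \ldots, g\}$ places $W_X$ in generic position with respect to the standard flag of $K((t))$, so that the tau function admits a clean power-series expansion in any local coordinate system on $\hat{J}_X$ near the origin. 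Note that $0 \in \Theta$ in this setting, since $H^0(X, \sO_X((g-1)\infty))$ always contains the constants, so the bound is non-trivial.

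Next, using \eqref{eq:formal-loop}, I would lift elements of $\widehat{\Jac}(X)[p]$ to explicit elements of the $p$-adic loop group. The ordinariness of $Y$ makes the Hasse--Witt matrix invertible, which is the input behind the Artin--Hasse construction underlying \eqref{eq:formal-loop}; the assumption $p \geq 2g$ keeps the Artin--Hasse series well-behaved in the relevant range. Working over sufficiently large $K$, with the identification $\widehat{\Jac}(X)[p] \cong (\Z/p\Z)^g$, the tau function then pulls back to a well-defined function $F : \F_p^g \to \F_p$.

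The core of the proof is to show that $F$ is a non-zero polynomial whose zero set in $\F_p^g$ has cardinality at most $p^{g-1}$. By the Schwartz--Zippel bound (or direct counting on $\F_p^g$), it suffices to exhibit $F$, up to a nowhere-vanishing factor, as a polynomial of total degree one. I expect such a linear factor to arise from the first-order Taylor coefficients of the tau function at the origin, which are controlled by the Hasse--Witt data entering \eqref{eq:formal-loop}; its zero locus in $\F_p^g$ is a hyperplane of size exactly $p^{g-1}$, yielding the desired inequality.

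The main obstacle is precisely this last structural step: pinning down the linear factor and verifying that it survives reduction mod $p$. A priori, $F$ could pick up higher-degree factors enlarging its zero set well beyond $p^{g-1}$, and ruling this out requires tracking how the soliton formulae for the tau function interact with the Hasse--Witt/Artin--Hasse coordinates after reduction. The three hypotheses work in concert here, with non-Weierstrass-ness fixing the shape of $W_X$, ordinariness trivialising the formal-group coordinates, and $p \geq 2g$ preventing accidental $p$-adic coincidences among the relevant tau-coefficients.
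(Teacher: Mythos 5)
Your plan follows the same overall route as the paper's proof---parametrize $\widehat{\Jac}(X)[p]$ by Artin--Hasse loops and rule out membership in $\Theta$ by a non-vanishing statement for the tau function---but the step you yourself flag as ``the main obstacle'' is exactly the content of the proof, and it is left open. Two things are missing. First, the parametrization must be exhaustive: the map $T_1\to\widehat{\Jac}(X)[p]$, $\vec{\pi}\mapsto[h_{{\boldsymbol \mu}}(T;\vec{\pi})A]_A$, has to be a bijection onto the full group of order $p^g$. This is Proposition \ref{cor:torsion}, and it requires the injectivity of the loop-group map (Proposition \ref{prop:inj}, proved via the filtration argument of Lemma \ref{lem:fil}) together with the Hasse--Witt solution count \eqref{eq:number-of-sol-hasse-witt} and Hensel's lemma to produce $p^g$ zeros of $\vec{l}$. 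Your proposal takes the identification $\widehat{\Jac}(X)[p]\cong(\Z/p\Z)^{g}$ ``via \eqref{eq:formal-loop}'' for granted without securing surjectivity.

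Second, the ``linear factor.'' The paper does not exhibit a degree-one polynomial $F$ on $\F_p^{\oplus g}$ and count its zeros; it argues one-sidedly. Writing $\vec{\pi}=(\varpi u_1,\dots,\varpi u_g)$ with $\varpi^{p-1}=p$, hypothesis (3) gives $\kappa(A)=(g,0,0,\dots)$, so in the Sato expansion (Theorem \ref{thm:tau} (2)) the term indexed by $\kappa(A)$ is $S_{(g,0,\dots)}(h)=h_g$, the coefficient of $T^g$ in $h_{{\boldsymbol \mu}}(T;\vec{\pi})$, whose absolute value equals $|\pi_g|$ whenever $|\pi_i|\le|\pi_g|$ for all $i$ (Proposition \ref{prop:nonvanishing} (2)); strict integrality of $A$ bounds all Pl\"ucker coordinates by $1$, and every other surviving Schur term is strictly smaller. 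Hence $\tau\ne 0$, i.e.\ the point lies outside $\Theta$, whenever $|\pi_g|=|p|^{1/(p-1)}$, equivalently $u_g\ne 0$; and \eqref{eq:number-of-sol-in-t1} guarantees at least $p^g-p^{g-1}$ such points, which yields the bound. So the linear form you are looking for is the last Artin--Hasse coordinate $u_g$, not a first-order Taylor coefficient of $\tau$ at the origin: the dominant Schur term $h_g$ has weight $g$, and only becomes linear in $\vec{u}$ after the substitution $\pi_j=\varpi u_j$ and reduction modulo $\varpi^2$. Until this dominance is established, your own worry that ``$F$ could pick up higher-degree factors'' is unresolved, and the argument is not complete.
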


In some cases, we expect that 
Theorem \ref{thm:main2} combined with other methods could be used to 
determine the set
$\Jac(X)[p] \cap \Theta$ completely.
(A similar strategy is taken in \cite{C} and \cite{Tamagawa}
for the Manin-Mumford conjecture.)
In this paper, however,
we do not pursuit this direction and 
leave it as a further problem.

\begin{remark}\label{rem:intro-rem-1}
\begin{enumerate}
\item
Theorem \ref{thm:main1} will be
deduced from Theorem \ref{thm:main3}
as a special case.
See Theorem \ref{thm:main1+} for details.
\item
The assumption in Theorem \ref{thm:main3} is quite restrictive.
If $X$ admits an automorphism of {\it prime} order $d >g+1$,
then $X$ must be isomorphic to a cyclic quotient of
a Fermat curve over an algebraic closure of $K$ (see \cite{Homma}).
There are, however, many curves that admits an automorphism
of non-prime order $d \geq 2g+1$.
We will discuss such examples in \S \ref{sect:other-ex}.
\item
On the other hand, the assumption in Theorem \ref{thm:main2} is quite mild.
Indeed, the assumptions (2) and (3) are satisfied in ``generic'' situation.
\end{enumerate}
\end{remark}

\subsection{The $p$-adic soliton theory}
We explain the outline of the proof.
We use the notations 
introduced in \S \ref{subsect:mainresult}.

The completion $\widehat{K(X)}_{\infty}$
of the function field $K(X)$ of $X$ at $\infty$
is isomorphic to 
the field of Laurent power series $K((\tt))$.
We fix such an isomorphism,
and let $A$ be the image of
the composition 
$H^0(X \setminus \{ \infty \}, \sO_X )
\hookrightarrow K(X) \hookrightarrow
\widehat{K(X)}_{\infty} \cong K((\tt))$.
Then $A$ is an element of
the {\it $A$-part of the Sato Grassmannian} 
$\Gr_A^{\alg}(K)$, which is by definition
the set of all $A$-submodules $V$ of $K((\tt))$
such that both 
$V \cap K[[\tt]]$ and $K((\tt))/(V+K[[\tt]])$ 
are finite dimensional over $K$.
A pair $(\sL, \sigma)$ of an invertible sheaf $\sL$ on $X$
and a trivialization $\sigma$ of $\sL$ 
on the infinitesimal neighborhood at $\infty$
is called a {\it Krichever pair}.
For a Krichever pair $(\sL, \sigma)$,
we write $V(\sL, \sigma)$ for the image of the composition
$H^0(X \setminus \{ \infty \}, \sL )
\hookrightarrow K(X) \hookrightarrow
\widehat{K(X)}_{\infty} \cong K((\tt))$,
where the first arrow is induced by $\sigma$.
Then $V(\sL, \sigma) \in \Gr^{\alg}_A(K)$, and
the correspondence $(\sL, \sigma) \mapsto V(\sL, \sigma)$
defines a bijection between
the set of isomorphism classes of Krichever pairs
and $\Gr_A^{\alg}(K)$.
Consequently, we obtain a surjective map 
\[ [ \cdot ]_A : \Gr_A^{\alg}(K) \to \Pic(X), 
\quad
V=V(\sL, \sigma) \mapsto [V]_A :=\sL
\]
where $\Pic(X)$ is the Picard group of $X$.
(So far the base field $K$ can be arbitrary.)

Now let us assume that $A$ has
a nice $p$-adic property 
which we call {\it strictly integral}
(see Definition \ref{def:bounded};
roughly speaking, 
this corresponds to the assumption (3)
in Theorem \ref{thm:main3}).
Then we have
the {\it Sato tau function} $\tau_{A^{\an}} : \Gamma \to K$
which enjoys the `key property' explained below.
Here, $\Gamma$ is the {\it $p$-adic loop group}
which contains a subgroup $\Gamma_+$ 
of all formal power series 
$h(T) \in 1 + T O_K[[T]]$ 
whose radius of convergence is strictly larger than $1$.
There exists a non-trivial action of $\Gamma$
on a subset of $\Gr_A^{\alg}(K)$ (which contains $A$).
The `key property' of the 
Sato tau function is that, for $h(T) \in \Gamma_+$, 
one has $\tau_{A^{\an}}(h(T))=0$ 
if and only if $[h(T) A]_A \in \Theta$. 
Therefore,
given an invertible sheaf $\mathcal{P}$ on $X$,
one can prove $\mathcal{P} \not\in \Theta$
by the following strategy:
\begin{enumerate}
\item
Find a good isomorphism $\widehat{K(X)}_{\infty} \cong K((\tt))$
for which $A$ is strictly integral.
\item
Construct $h(T) \in \Gamma_+$ such that
$[h(T)A]_A = \mathcal{P}$.
\item
Show that $\tau_{A^{\an}}(h(T)) \not= 0$.
\end{enumerate}

When $X$ is given by \eqref{eq:curve},
Anderson worked out the constructions (1) and (2) 
by explicit calculation using the defining equation.
We will develop a general theory for (1) and (2)
by using the Hasse-Witt matrix of $Y$
and Artin-Hasse exponential.
Let us explain a bit more about (2),
under the situation of Theorem \ref{thm:main3}.
In order to construct $p$-torsion points, 
Anderson used the {\it Dwork exponential} \cite[\S 3.5]{Anderson}
\[ h(T) = \exp( (-p)^{1/(p-1)}((uT)-(uT)^p) )  \]
for suitable $u \in O_K^*$.
The key property is that
the radius of convergence of $h(T)$ is $>1$
and hence $h(T)$ belongs to $\Gamma_+$.
In order to construct $p^n$-torsion points for $n \geq 1$, 
we will use the {\it Artin-Hasse exponential}
\[ e^{AH}(T) := 
\exp(\sum_{k=0}^{\infty} \frac{1}{p^k} T^{p^k}).
\]
The key property is that
$e^{AH}(T)$ belongs to $\Z_{(p)}[[T]]$,
and hence
$h(T; \pi) := e^{AH}(\pi T)$ belongs to $\Gamma_+$
for any element $\pi$ of the maximal ideal of $O_K$.
We will construct 
a certain formal power series 
$l_1(X)$
(see \eqref{eq:log-fgl-diagonal2})
which has the following property:
if $l_1(\pi)=0$ and the absolute value of 
$\pi$ is $p^{-1/(p^n-p^{n-1})}$,
then $[h(T; \pi)A]_A$ belongs to $\Jac(X)[p^n]_1$.
The coefficients of $l_1(X)$ are
closely related to 
the {\it Hasse-Witt matrix} of $Y$,
which is invertible if and only if $Y$ is ordinary.
This last fact enables us 
to have a good control of the absolute value of $\pi$
such that $l_1(\pi)=0$.
(Actually, $l_1(X)$ gives rise to the logarithm function of
a direct summand of 
the formal group $\hat{J}_X/O_K$ arising from the Jacobian variety
of $X$, at least when $K$ is absolutely unramified. 
See Theorem \ref{thm:fgl} and Remark \ref{rem:log-of-fgl-diag}.)

For (3), we follow Anderson's method \cite{Anderson}.
Namely, we use an  expression of
$\tau_{A^{\an}}(h(T))$ as an infinite sum of the product of
the {\it Schur functions} and {\it Pl\"ucker coordinates}
(see Theorem \ref{thm:tau} (2)).
This fact was also critical
in the classical (complex analytic) soliton theory
(compare \cite[Proposition 8.3]{SW}).

\subsection{Notations}\label{sect:notations}
For an abelian group $A$ and $n \in \Z$, 
we write $A[n] := \{ a \in A ~|~ na=0 \}$.
For any scheme $S$, 
we denote by $\Pic(S)$ the Picard group of $S$.
If $X$ is a smooth projective irreducible
curve over a field,
we write $\Jac(X) = \{ \sL \in \Pic(X) ~|~ \deg \sL=0 \}$.

%%%%%%%%%%%%%%%%%%%%%%%%%%%
\section{Sato Grassmannian and Krichever correspondence}
%%%%%%%%%%%%%%%%%%%%%%%%%%%

\subsection{Partitions and Maya diagrams}\label{sect:par-maya}
A sequence $\kappa = (\kappa_i)_{i=1}^{\infty}$ of 
non-negative integers is called a {\it partition} 
if $\kappa_i \geq \kappa_{i+1}$ for all $i \in \Z_{>0}$
and if $\kappa_i=0$ for all sufficiently large $i$.
For a partition $\kappa$,
we define the {\it length} $l(\kappa)$ 
and {\it weight} $|\kappa|$ of $\kappa$ by 
\[ l(\kappa) := 
\min \{ i \in \Z_{>0} ~|~ \kappa_i = 0 \} - 1 \in \Z_{\geq 0}
\qquad
\text{and}
\qquad
|\kappa| := \sum_{i=1}^{\infty} \kappa_i \in \Z_{\geq 0}.
\]
We write $\Par$ for the set of all partitions.
The set $\Par$ is equipped with a partial ordering
defined by $\lambda \leq \kappa$
if and only if $\lambda_i \leq \kappa_i$ for all $i \in \Z_{>0}$.

A subset $M$ of $\Z$ is called a {\it Maya diagram} 
if both $M \cap \Z_{\leq 0}$ and $\Z_{>0} \setminus M$
are finite.
We write $\Maya$ for the set of all Maya diagrams.
For $M \in \Maya$,
we define the {\it index} $i(M)$ 
and {\it partition} $\kappa(M) = (\kappa_i(M))_{i=1}^{\infty}$ 
of $M$ by
$i(M) := |M \cap \Z_{\leq 0}| - |\Z_{>0} \setminus M| \in \Z$
and
$\kappa_i(M) := i-i(M) - s_i(M)$,
where $\{ s_i(M) \}_{i=1}^{\infty}$ 
is a (unique) increasing sequence of integers
such that $M = \{ s_i(M) ~|~ i \in \Z_{>0} \}$.
It is well-known (and easy to show) 
that $\kappa(M)$ is indeed a partition, and that
\begin{equation}\label{eq:maya-par}
   \Maya \to \Z \times \Par,
\qquad M \mapsto (i(M), \kappa(M))
\end{equation}
is a bijective map.

\subsection{Sato Grassmannian}\label{sect:satograss}
Let $F$ be a field.
We work with the field of Laurent power series 
\[ 
F((\tt)) := 
\{ v(T) = \sum_{i=-\infty}^n v_i T^i ~|~ n \in \Z, ~v_i \in F \}
\]
with coefficients in $F$.
The {\it degree} of 
$v(T) = \sum_{i={-\infty}}^{n} v_i T^i  \in F((\tt)) ~(v_i \in F)$ 
is defined to be $\deg v(T):=n$ if $v_n \not= 0$;
if further $v_n=1$ we call $v(T)$ {\it monic}.
For an $F$-linear subspace $V$ of $F((\tt))$,
we define a subset 
\[ \M(V) := \{ \deg v(T) ~|~ v(T) \in V \setminus \{ 0 \} \} \]
of $\Z$
and a natural map
\begin{equation*}
f_V : V\to F((\tt))/F[[\tt]], \qquad  v\mapsto v+ F[[\tt]].
\end{equation*}
Note that we have
\begin{equation}\label{eq:ker-coker}
 \dim \ker(f_V) = |\M(V) \cap \Z_{\leq 0}|,
\qquad
 \dim \coker(f_V) = |\M(V) \setminus  \Z_{> 0}|.
\end{equation}

The {\it Sato Grassmannian} $\Gr^{\alg}(F)$ is the set of 
all $F$-linear subspaces $V$ of $F((\tt))$ such that 
both $\ker(f_V)$ and $\coker(f_V)$ are finite dimensional.
By \eqref{eq:ker-coker},
an $F$-linear subspace $V \subset F((\tt))$
belongs to $\Gr^{\alg}(F)$
if and only if  $\M(V) \in \Maya$.
For $V \in \Gr^{\alg}(F)$,
we call $i(V) := i(\M(V))$ (resp. $\kappa(V) := \kappa(\M(V))$) 
the {\it index} (resp. {\it partition}) of $V$.
%Given $i \in \Z$,
%we set $\Gr^{\alg, i}(F) := \{ V \in \Gr^{\alg}(F) ~|~ i(V)=i \}$.

Let $V, ~V' \in \Gr^{\alg}(F)$.
We define their product $VV'$ to be 
the $F$-linear subspace of $F((\tt))$
spanned by 
$\{ vv' \in F((\tt)) ~|~ v \in V, v' \in V' \}$,
which belongs to $\Gr^{\alg}(F)$.
We call $V$ and $V'$ {\it homothety equivaliant}
if $V=u(T)V'$ for some $u \in F[[\tt]]^*$.
This equivalence relation is denoted by $\sim$.
If $V \sim V'$, then we have $\M(V)=\M(V')$,
and hence $i(V)=i(V'), ~\kappa(V)=\kappa(V')$.
The product $(V, V') \mapsto VV'$ descends to
the set $\Gr^{\alg}(F)/\sim$
of all homothety equivalence classes.

\subsection{Standard basis and Pl\"ucker coordinate}
\label{sect:nongapsetc}
Let $V \in \Gr^{\alg}(F)$.
Put $M=\M(V)$ 
and write $M = \{ s_i \}_{i=1}^{\infty}$ 
with a strictly increasing sequence
of integers $ \{ s_i \}$.
There exists a unique basis 
$\{ v_i(T) = \sum_j v_{ij}T^j \}_{i=1}^{\infty}$
of $V$
satisfying the following conditions:
\begin{enumerate}
\item $v_i(T)$ is monic of degree $s_i$ for all $i \geq 1$:
\item $v_{i, s_j}=0$ for all $i>j \geq 1$.
\end{enumerate}
We call such $\{ v_i(T) \}_{i=1}^{\infty}$
the {\it standard basis} of $V$.
For a fixed Maya diagram $M = \{ s_i \}_{i=1}^{\infty} \in \Maya$,
the set $\{ V \in \Gr^{\alg}(F) ~|~ \M(V)=M \}$
(called a {\it Schubert cell})
is in one-to-one correspondence with
the set of all
$\{ v_i(T) \in F((\tt)) \}_{i=1}^{\infty}$
satisfying (1) and (2) above.

Let $V \in \Gr^{\alg}(F)$ and
take the standard basis 
$\{ v_i(T) = \sum_j v_{ij}T^j \}_{i=1}^{\infty}$.
The Pl\"ucker coordinate $P_{\lambda}(V)$ 
of $V$ at $\lambda \in \Par$ is defined to be the
common value of
\begin{equation}\label{eq:plucker}
 P_{\lambda}(V) := \det(v_{i, j-\lambda_j-i(V)})_{i, j=1}^n 
\end{equation}
for all sufficiently large $n$.
The following lemma is an immediate consequence of definition
(see \cite[\S 2.6]{Anderson}).

\begin{lemma}\label{lem:plucker}
Let $V \in \Gr^{\alg}(F)$.
We have $P_{\kappa(V)}(V)=1$.
If $\lambda \in \Par$ satisfies $P_{\lambda}(V) \not= 0$,
then we have $\lambda \geq \kappa(V)$.
\end{lemma}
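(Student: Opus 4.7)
The plan is to read both claims directly off the determinantal definition \eqref{eq:plucker}, using the two conditions that characterize the standard basis $\{v_i(T)\}_{i=1}^\infty$.

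For $P_{\kappa(V)}(V)=1$, I would substitute $\lambda=\kappa(V)$: writing $\M(V)=\{s_i\}$ and recalling $\kappa_j(V)=j-i(V)-s_j$, the second index in \eqref{eq:plucker} collapses to $s_j$, so the $(i,j)$-entry becomes $v_{i,s_j}$. Monicity (condition (1)) gives $v_{i,s_i}=1$ and also $v_{i,s_j}=0$ when $j>i$, since then $s_j>\deg v_i(T)=s_i$; condition (2) gives $v_{i,s_j}=0$ when $j<i$. So the matrix is the $n\times n$ identity and its determinant is $1$.

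For the inequality $\lambda\geq\kappa(V)$, set $m_j:=j-\lambda_j-i(V)$, so that $P_\lambda(V)=\det(v_{i,m_j})_{i,j=1}^n$. The sequence $\{m_j\}$ is strictly increasing because $\lambda$ is a partition: $m_{j+1}-m_j=1+\lambda_j-\lambda_{j+1}\geq 1$; and of course $\{s_i\}$ is strictly increasing. If $P_\lambda(V)\neq 0$, the Leibniz expansion of the determinant contains a nonzero term, so there exists a permutation $\sigma$ of $\{1,\ldots,n\}$ with $v_{i,m_{\sigma(i)}}\neq 0$ for every $i$; since $v_i(T)$ has degree $s_i$, this forces $m_{\sigma(i)}\leq s_i$.

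From here I would deduce the pointwise inequality $m_j\leq s_j$ by a short rearrangement argument: for each fixed $j$, the set $\{\sigma(1),\ldots,\sigma(j)\}$ has cardinality $j$, so it contains some $\sigma(k)\geq j$ with $k\leq j$, and then
\[
s_j \ \geq\ s_k \ \geq\ m_{\sigma(k)} \ \geq\ m_j
\]
by monotonicity of the two sequences together with the bound at index $k$. Unwinding the definitions, $m_j\leq s_j$ is equivalent to $\lambda_j\geq \kappa_j(V)$, giving $\lambda\geq\kappa(V)$. Independence of the determinant from $n$ for $n$ large is automatic, since $\lambda_j=\kappa_j(V)=0$ eventually and the matrix stabilizes to the identity on the tail. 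There is no genuine obstacle: once the two defining properties of the standard basis and the strict monotonicity of $\{s_i\}$ and $\{m_j\}$ are in place, the rearrangement step above is essentially the entire content of the argument.
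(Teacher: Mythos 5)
Your proof is correct, and it takes the same route the paper intends: the paper states the lemma as an immediate consequence of the definition (citing Anderson, \S 2.6) and omits the details, which your direct computation of the determinant at $\lambda=\kappa(V)$ and the permutation/rearrangement argument for the vanishing statement supply accurately.
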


\subsection{$A$-part of the Sato Grassmannian}\label{sect:a-part}
Let $A$ be an $F$-subalgebra of $F((\tt))$
such that
\begin{equation}\label{eq:assumption-on-a}
 A \cap F[[\tt]] = F, \qquad 
g_A := \dim F((\tt))/(A+F[[\tt]]) < \infty. 
\end{equation}
(In the next subsection,
we will construct examples of $A$ arising from geometry.)
It follows that 
$\M(A) \subset \Z_{\geq 0}$
and there exist integers $\mu_1, \ldots, \mu_{g_A}$ such that
\begin{equation}
\label{eq:def-of-mu-sequence}
\Z_{\geq 0} \setminus \M(A) = \{ \mu_1, \ldots, \mu_{g_A} \},
\qquad
1 \leq \mu_1 < \cdots < \mu_{g_A}.
\end{equation}
Hence $A$ is an element of $\Gr^{\alg}(F)$ of index $1-g_A$.
There is a decomposition of $F$-vector spaces
\begin{equation}
\label{eq:decomp0}
   F((\tt)) = A \oplus \tt F[[\tt]] \oplus 
  (\bigoplus_{i=1}^{g_A} F T^{\mu_i}).
\end{equation}

We define the {\it $A$-part of the Sato Grassmannian}
$\Gr^{\alg}_A(F)$
by
\[ \Gr_A^{\alg}(F) := \{ V \in \Gr^{\alg}(F) ~|~ aV \subset V
~\text{for all}~a \in A \}.
\]
Note that $\Gr^{\alg}_A(F)$ is stable 
under the product and homothety equivalence.
The product structure makes $\Gr_A^{\alg}(F)$ 
(resp. $\Gr_A^{\alg}(F)/\sim$)
a commutative semi-group with the unit element $A$
(resp. the class of $A$).

\begin{remark}
The semi-groups $\Gr_A^{\alg}(F)$ and $\Gr_A^{\alg}(F)/\sim$
are not necessary groups in general,
but this is the case in the examples
we will construct in the next subsection.
(See Theorem \ref{thm:krichever} (1) and Remark \ref{rem:sing-cv}
for details.)
\end{remark}

\subsection{Coordinate ring of a curve minus one point}
Let $X$ be a smooth projective geometrically connected curve
of genus $g \geq 2$ over $F$
equipped with an $F$-rational point $\infty \in X(F)$.
Let $\hat{\sO}_{X, \infty}$ be the completion 
of the local ring ${\sO}_{X, \infty}$ of $X$ at $\infty$.
We fix an isomorphism 
$N_0 :\hat{\sO}_{X, \infty} \cong F[[\tt]]$  of $F$-algebras.
We denote
by the same letter $N_0$
the composition map
$\Spec F[[\tt]] \overset{N_0}{\to} 
\Spec \hat{\sO}_{X, \infty} \to X$.
We write $N$ for 
the map $\Spec F((\tt)) \to X$ induced by $N_0$.
We define 
\begin{equation}\label{eq:affinering}
 A =A(X, \infty, N) := \{ N^*(f) \in F((\tt)) ~|~
f \in H^0(X \setminus \{ \infty \}, \sO_X) \}.
\end{equation}
Then $A$ is an $F$-subalgebra of $F((\tt))$
such that $\Spec A \cong X \setminus \{ \infty \}$.
We write $\M(A) = \{ s_i ~|~ i \in \Z_{>0} \}$
with a strictly increasing sequence of integers
$\{ s_i \}_{i=1}^{\infty}$.
The Riemann-Roch theorem shows that
\begin{equation}\label{eq:seq}
s_1=0, \quad s_2 \geq 2 \quad \text{and} 
\quad s_i=i-1+g \quad \text{if}~i > g.
\end{equation}
It follows that $A$ satisfies
\eqref{eq:assumption-on-a}
with $g_A=g$.
The Maya diagram $\M(A)$ of $A$ 
is called the {\it Weierstrass semi-group}
of $X$ at $\infty$.
The complement $\Z_{\geq 0} \setminus \M(A)$
is nothing other than the Weierstrass gap sequence
$WG_{\infty}(X)$ of $X$ at $\infty$ (see \eqref{eq:nongap}).
From \eqref{eq:seq},
we get a constraint on the values
of $\mu_1, \cdots, \mu_g$
defined in \eqref{eq:def-of-mu-sequence}:
\begin{equation}\label{eq:vecmu}
WG_{\infty}(X)=\{ \mu_1, \cdots, \mu_g \},
\qquad 1=\mu_1 < \cdots < \mu_g \leq 2g-1.
\end{equation}

\subsection{Krichever pair}\label{ex:kripair}
An {\it $N$-trivialization} of an invertible sheaf $\sL$ on $X$
is an isomorphism $\sigma : N^*\sL\cong F((\tt))$
of $F((\tt))$-vector spaces
induced by an isomorphism
$\sigma_0 : N_0^* \sL \cong F[[\tt]]$ of $F[[\tt]]$-modules. 
A pair $(\sL, \sigma)$ of
an invertible sheaf $\sL$ on $X$ and an $N$-trivialization $\sigma$ of $\sL$
is called a {\it Krichever pair}.
Two Krichever pairs are said to be isomorphic
if there is an isomorphism of invertible sheaves
compatible with $N$-trivializations.
The set of all isomorphism classes of Krichever pairs
forms an abelian group under tensor product
with the unit element $(\sO_X, N)$.

Suppose  we are given a divisor $D = \sum_{P \in X} n_P P$ on $X$.
The associated invertible sheaf $\sO_X(D)$ admits an $N$-trivialization
$\sigma(D)$ induced by the composition
$\sO_X(D) \hookrightarrow F(X) \overset{N^*}{\to} F((\tt))
\overset{T^{-n_{\infty}}}{\to} F((\tt))$.
(Here $n_{\infty}$ is the coefficient of $\infty$ in $D$.)
Thus we obtain a Krichever pair $(\sO_X(D), \sigma(D))$.

\subsection{Krichever correspondence}
To a Krichever pair $(\sL,\sigma)$,
we associate an $F$-linear subspace $V(\sL, \sigma)$ of $F((\tt))$ by 
\begin{equation}\label{eqn:Krichever_pair}
V(\sL, \sigma) := 
\{\sigma N^* f \in F((\tt)) 
~|~ f \in H^0(X \setminus \{ \infty \}, \sL) \}.
\end{equation}
Note that $A = V(\sO_X, N)$,
and that $V(\sL, \sigma)$ belongs to
the $A$-part of the Sato Grassmannian $\Gr^{\alg}_A(F)$.
If $(\sL, \sigma)$ and $(\sL', \sigma')$ are
two Krichever pairs, one has
$V(\sL \otimes \sL', \sigma \otimes \sigma')
 = V(\sL, \sigma) V(\sL', \sigma')$
(the product introduced at the end of \S \ref{sect:satograss}).

We define the theta divisor $\Theta$ by
\begin{equation}\label{eq:def-of-theta2}
\Theta := \{ \sL \in \Jac(X) ~|~ 
H^0(X, \sL((g-1)\infty)) \not= 0 \}.
\end{equation}

\begin{theorem}[Krichever correspondence]\label{thm:krichever}
\begin{enumerate}
\item
The correspondence $(\sL, \sigma) \mapsto V(\sL, \sigma)$
defines a bijective map, 
compatible with the product structures,
between the set of all isomorphism classes of 
Krichever pairs and $\Gr_A^{\alg}(K)$.
In particular,
the semi-group $\Gr_A^{\alg}(K)$ is actually an abelian group.
Let us denote by 
\begin{equation*}
 [ \cdot ]_A : \Gr_A^{\alg}(F) \to \Pic(X)
\end{equation*}
the canonical surjective homomorphism
characterized by 
$[V(\sL, \sigma)]_A = \sL$
for any Krichever pair $(\sL, \sigma)$.
\item
We have the following:
\begin{enumerate}
\item $\ker [ \cdot ]_A = \{ u(T) A ~|~ u(T) \in F[[\tt]]^* \}$.
In particular, $[ \cdot ]_A$ induces an isomorphism
\begin{equation}\label{eq:krichever-isom}
  (\Gr_A^{\alg}(F)/\sim) ~\cong~ \Pic(X).
\end{equation}
\item $i(V) = \deg[V]_A - g + 1$ for all $V \in \Gr_A^{\alg}(F)$.
\item Let $n \in \Z$ and $V \in \Gr_A^{\alg}(F)$. 
Then, there is an isomorphism
\[ H^0(X, [V]_A (n \infty)) \cong V \cap T^{n} F[[\tt]]. \]
\item
For $V \in \Gr_A^{\alg}(F)$,
it holds
$[V]_A \in \Theta$ 
if and only if $i(V)=1-g$ and $V \cap T^{g-1} F[[\tt]] \not= 0$.
\end{enumerate}
\end{enumerate}
\end{theorem}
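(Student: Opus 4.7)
The plan is to construct an inverse to the map $(\sL,\sigma) \mapsto V(\sL,\sigma)$ and then deduce (2a)--(2d) by translating sheaf cohomology on $X$ into linear algebra inside $F((\tt))$.

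For the forward direction, $V(\sL,\sigma)$ is visibly an $A$-submodule of $F((\tt))$. Using the cover of $X$ by the affine open $U_0 := X \setminus \{\infty\}$ and the formal disk $\widehat{U}_\infty := \Spec \hat{\sO}_{X,\infty}$, the associated two-term \v{C}ech complex (justified by Beauville--Laszlo descent along the faithfully flat map $\Spec F[[\tt]] \to X \setminus U_0$) yields $V(\sL,\sigma) \cap F[[\tt]] \cong H^0(X,\sL)$ and $F((\tt))/(V(\sL,\sigma) + F[[\tt]]) \cong H^1(X,\sL)$, the first because the condition of having no pole at $\infty$ is precisely the integrality condition after $\sigma$-trivialization. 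Both are finite-dimensional by Riemann--Roch, placing $V(\sL,\sigma)$ in $\Gr_A^{\alg}(F)$.

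For the inverse, given $V \in \Gr_A^{\alg}(F)$, I would first show that $V$ is an invertible $A$-module: it is torsion-free as an $A$-submodule of $F((\tt))$ over the domain $A$, and the finite-dimensionality conditions \eqref{eq:assumption-on-a} together with the stability $AV \subset V$ force $V \otimes_A \operatorname{Frac}(A)$ to be one-dimensional over $\operatorname{Frac}(A)$, so $V$ is invertible because $A$ is the coordinate ring of a smooth affine curve and hence Dedekind. Gluing the sheaf $\widetilde{V}$ on $U_0$ with the trivial module $F[[\tt]]$ on $\widehat{U}_\infty$ along the natural identification $V \otimes_A F((\tt)) \cong F((\tt))$ (with both sides viewed inside $F((\tt))$) produces an invertible $\sO_X$-module $\sL$ together with a tautological $N$-trivialization $\sigma$ satisfying $V(\sL,\sigma) = V$. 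Compatibility with tensor products is immediate, so $(\sL, \sigma) \mapsto V(\sL, \sigma)$ is a group isomorphism; this gives (1).

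The same \v{C}ech computation applied to $\sL(n\infty)$, whose $N$-trivialization is $T^{-n}\sigma$, proves (2c): $H^0(X, [V]_A(n\infty)) \cong V \cap T^n F[[\tt]]$. Taking $n = 0$ and comparing dimensions via \eqref{eq:ker-coker} yields $h^0([V]_A) - h^1([V]_A) = i(V)$, which combined with Riemann--Roch gives (2b). Assertion (2d) is immediate from (2c) with $n = g - 1$ together with $\deg[V]_A = 0 \iff i(V) = 1 - g$. For (2a), if $u(T) \in F[[\tt]]^*$ then $u(T) A = V(\sO_X, u \cdot N)$, and conversely any two $N$-trivializations of $\sO_X$ differ by an element of $F[[\tt]]^*$. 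The main obstacle is the inverse construction: one needs a Beauville--Laszlo style formal-gluing result to reconstruct an invertible sheaf on $X$ from its restrictions to $U_0$ and $\widehat{U}_\infty$ together with a compatibility over the punctured formal disk. Once this descent is set up cleanly, every remaining assertion follows by unwinding the resulting two-term \v{C}ech complex and applying Riemann--Roch.
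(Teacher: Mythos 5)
Your outline is correct and follows essentially the same route as the proof the paper points to (it defers to Anderson \S 2.3--2.4, Mumford, and Segal--Wilson \S 6): establish that $V$ is an invertible $A$-module of generic rank one, reconstruct $(\sL,\sigma)$ by formal gluing of $\widetilde V$ on $X\setminus\{\infty\}$ with $F[[\tt]]$ on the formal disk, and read off (2a)--(2d) from the two-term \v{C}ech complex together with Riemann--Roch. The only points deserving more care in a full write-up are the finite generation of $V$ over $A$ and the rank-one claim (both follow from the Maya-diagram growth condition), but these are exactly the steps carried out in the cited references.
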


A proof can be found in \cite[\S 2.3-2.4]{Anderson}
(see also \cite{Mumford} and \cite[\S 6]{SW}).

Let $\sL \in \Pic(X)$.
It follows from (2c) that
$\M(V(\sL, \sigma)) \in \Maya$ 
does not depend on the choice of
an $N$-trivialization $\sigma$ of $\sL$.
Hence we may write
$\M(\sL) := \M(V(\sL, \sigma))$ and
$\kappa(\sL) := \kappa(V(\sL, \sigma))$.
Suppose now  $\sL \in \Jac(X)$.
Then we have $\M(\sL) \subset \Z_{\geq 0}$.
Recall that the {\it Weierstrass gap sequence}
$WG_{\infty}(\sL)$ of $\sL$ at $\infty$ is defined by
\begin{equation}\label{eq:nongap2}
\begin{split}
 WG_{\infty}(\sL) &:= 
\{ n \in \Z_{\geq 0} ~|~ 
H^0(X, \sL(n \infty)) =
H^0(X, \sL((n-1) \infty)) \}. 
\end{split}
\end{equation}
(We have $WG_{\infty}(X) = WG_{\infty}(\sO_X)$,
see \eqref{eq:nongap}.)
We have
$\M(\sL) = \Z_{\geq 0} \setminus WG_{\infty}(\sL)$.
It holds $\sL \not\in \Theta$ if and only if
$\kappa(\sL)=(0, 0, \cdots)$.
Many results on special divisors
can be stated in terms of $\kappa(\sL)$.
For instance,
Clifford's theorem \cite[Theorem 5.4]{Hartshorne}
can be stated as
an inequality $\kappa(\sL) \leq (g, g-1, \cdots, 1)$.
In particular, we have
\begin{equation}\label{eq:kappa-one-plus-length}
\kappa_1(\sL) + l(\kappa(\sL)) \leq 2g.
\end{equation}

\begin{remark}\label{rem:sing-cv}
The Krichever correspondence can be 
extended to any integral proper geometrically connected
curve $X$ over $F$ 
with a smooth $F$-rational point $\infty \in X(F)$,
upon replacing $\Pic(X)$ by
its compactification \cite{rego}
(which is no longer an abelian group).
See \cite{Mumford} and \cite[\S 6]{SW} for details.
\end{remark}

\subsection{Hermite basis}\label{sect:hermitbasis}
Let ${\boldsymbol \mu} = (\mu_1, \ldots, \mu_g)$ be as in \eqref{eq:vecmu}.
By Serre duality, we have
$WG_{\infty}(X) =
\{ \ord_{\infty}(\omega)+1 ~|~ \omega \in H^0(X, \Omega_{X/F}^1) \}$.
Hence
there exists a unique basis $\omega_1, \cdots, \omega_g$
of $H^0(X, \Omega_{X/F}^1)$ 
such that $N^*(\omega_i) \in \Omega_{F((\tt))/F}^1$
are expanded as
\begin{equation}\label{eq:hermitbasis}
N^*(\omega_i) = \sum_{j=\mu_i}^{\infty} c_{ij} (\tt)^{j-1} d(\tt)
\qquad (i=1, \cdots, g)
\end{equation}
with $c_{ij} \in F$ 
satisfying $c_{i \mu_j} = \delta_{ij}$ (Kronecker's delta)
for all $i, j \in \{ 1, \cdots, g \}$.
Such a basis is called the {\it Hermite basis}.

Let $m \in \Z_{> 0}$.
It follows from \eqref{eq:decomp0}
that there exist (unique)
\begin{equation}\label{eq:decofpower-00-0}
\begin{split}
&\vec{b}^{[m]}(T) =
(b_j^{[m]}(T))_{j=1}^g
\in (A \oplus \tt F[[\tt]])^{\oplus g}
~~\text{ and }~~ 
\mathbf{e}^{[m]} = (e_{i, j}^{[m]})_{i, j=1}^g \in M_g(F)
\\
&\text{such that}\quad
T^{\mu_j m} = 
  b_j^{[m]}(T) + \sum_{i =1}^g e_{i, j}^{[m]}T^{\mu_i}
~\text{ for all}~j \in \{1, \cdots, g \}.
\end{split}
\end{equation}

The following result is due to St\"ohr and Viana:

\begin{proposition}[St\"ohr-Viana \cite{Stohr-Viana}, Proposition 2.3]
\label{thm:hassewitt}
Let $m \in \Z_{> 0}$. 
We have an equality in $M_g(F)$:
\[ (e_{i, j}^{[m]})_{i, j=1}^g = (c_{i, m \mu_j})_{i, j=1}^g. \]
\end{proposition}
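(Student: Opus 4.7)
The plan is to compute, for each pair $(i,j) \in \{1, \ldots, g\}^2$, the residue
\[
r_{i,j} \;:=\; \Res_\infty\bigl(T^{m\mu_j}\,N^*(\omega_i)\bigr)
\]
in two independent ways: a direct expansion using \eqref{eq:hermitbasis} will identify $r_{i,j}$ with $c_{i, m\mu_j}$, and an application of the decomposition \eqref{eq:decofpower-00-0} together with the residue theorem on the complete curve $X$ will identify the same $r_{i,j}$ with $e_{i,j}^{[m]}$.

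First I would carry out the direct computation. Setting $s := \tt$, the expansion \eqref{eq:hermitbasis} reads $N^*(\omega_i) = \sum_{k \geq \mu_i} c_{ik}\, s^{k-1}\,ds$, so
\[
T^{m\mu_j}\,N^*(\omega_i) \;=\; \sum_{k \geq \mu_i} c_{ik}\,s^{\,k-1-m\mu_j}\,ds.
\]
The residue at $s=0$ is the coefficient of $s^{-1}\,ds$, namely $c_{i, m\mu_j}$. This handles one side.

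For the other side, I would use \eqref{eq:decofpower-00-0} to rewrite
\[
T^{m\mu_j}\,N^*(\omega_i) \;=\; N^*(b_j^{[m]})\cdot N^*(\omega_i) \;+\; \sum_{l=1}^g e_{l,j}^{[m]}\,T^{\mu_l}\,N^*(\omega_i),
\]
and then show that the first summand contributes $0$ to $\Res_\infty$. Decomposing $b_j^{[m]} = a + h$ according to $A \oplus \tt F[[\tt]]$: (i) the form $a\,\omega_i$ is a global meromorphic differential on $X$ whose only possible pole is at $\infty$, so by the residue theorem $\Res_\infty(a\,\omega_i)=0$; (ii) $h \cdot N^*(\omega_i)$ lies in $s F[[s]] \cdot s^{\mu_i-1} F[[s]]\,ds \subseteq s^{\mu_i} F[[s]]\,ds$, which is holomorphic at $s=0$ since $\mu_i \geq 1$, so its residue vanishes. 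For the surviving summand, the expansion $T^{\mu_l} N^*(\omega_i) = s^{-\mu_l}\sum_{k \geq \mu_i} c_{ik}\,s^{k-1}\,ds$ has residue equal to $c_{i,\mu_l}$ (read as $0$ when $\mu_l < \mu_i$), and the Hermite normalization $c_{i,\mu_l} = \delta_{il}$ gives
\[
r_{i,j} \;=\; \sum_{l=1}^g e_{l,j}^{[m]}\,\delta_{il} \;=\; e_{i,j}^{[m]}.
\]
Combined with the first calculation, this yields the proposition.

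I do not anticipate a genuine obstacle: the argument is essentially a residue-theorem bookkeeping exercise. The only point requiring care is verifying that every piece of the decomposition \eqref{eq:decofpower-00-0} other than the gap-monomial part is annihilated by $\Res_\infty(- \cdot N^*(\omega_i))$, and that the Hermite normalization cleanly diagonalizes the pairing between the gap-monomials $T^{\mu_l}$ and the basis $\omega_i$.
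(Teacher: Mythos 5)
Your argument is correct and is essentially the paper's own proof (which follows St\"ohr--Viana): both rest on splitting $b_j^{[m]}=a_j^{[m]}+g_j^{[m]}$ with $a_j^{[m]}\in A$, killing the $A$-part of the residue via the residue theorem applied to $a_j^{[m]}\omega_i$, killing the $\tt F[[\tt]]$-part by holomorphy, and evaluating the gap-monomial terms through the Hermite normalization $c_{i\mu_l}=\delta_{il}$. The paper compresses this into two lines; your write-up is the same computation spelled out.
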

\begin{proof}
For the completeness sake,
we recall the proof given in loc. cit. 
We write $b_j^{[m]}(T) = a_j^{[m]}(T) + g_j^{[m]}(T)$
with $a_j^{[m]}(T) \in A$ and $g_j^{[m]}(T) \in \tt F[[\tt]]$.
Then $a_j^{[m]}(T) \omega_i \in \Omega_{F(X)/F}^1$
is regular except at $\infty$,
and its residue at $\infty$ is
given by $c_{i, m \mu_j} - e_{ij}^{[m]}$.
(Here $F(X)$ is the function field of $X$.)
The proposition follows from the residue theorem.
\end{proof}

\subsection{Hasse-Witt invariant}\label{sect:hasse-witt1}
Suppose now that $F$ is a perfect field  of characteristic $p>0$.
We also assume $p \geq 2g$.
There exists an additive map
$C : \Omega_{F(X)/F}^1 \to \Omega_{F(X)/F}^1$
called the {\it Cartier operator}
(see, for example, \cite[A2]{Lang}).
It is characterized by the following properties:
(i) $C(x^p \omega) = x C(\omega)$ for any
$x \in F(X)$ and $\omega \in \Omega_{F(X)/F}^1$;
(ii) $C(x^{p-1}dx) = dx$ for any $x \in F(X)$.
It preserves the space $H^0(X, \Omega_{X/F}^1)$
of regular differentials.

Let $\omega_1, \cdots, \omega_g$
be a basis of $H^0(X, \Omega_{X/F}^1)$.
We take a matrix
$(\gamma_{ij})_{i,j=1}^g \in M_g(F)$ 
such that $C(\omega_i) = \sum_{j=1}^g \gamma_{ij} \omega_j$.
Then
the matrix $(\gamma_{ij}^{1/p})_{i,j=1}^g$ is called 
the {\it Hasse-Witt matrix}
(with respect to the basis $\omega_1, \cdots, \omega_g$),
and its rank is called the {\it Hasse-Witt invariant} of $X$.
The Hasse-Witt invariant is $g$
if and only if $X$ is {\it ordinary}.

Now let us suppose $\omega_1, \cdots, \omega_g$
to be the Hermite basis with expansion \eqref{eq:hermitbasis}.
It follows from the definition that
the Hasse-Witt matrix with respect to $\omega_1, \cdots, \omega_g$
is given by $(c_{i, p \mu_j})$.
More generally, for any $k \in \Z_{\geq 0}$
we have
\[ C^k(\omega_i) = \sum_{j=1}^g c_{i, p^k \mu_j}^{1/p^k} \omega_j, \]
and hence we get an equality in $M_g(F)$ 
\begin{equation}\label{eq:hw-mat-prod}
 (c_{i, p^k \mu_j}) = 
(c_{i, p \mu_j})
(c_{i, p \mu_j}^p)
\cdots
(c_{i, p \mu_j}^{p^{k-1}}).
\end{equation}

\begin{proposition}\label{cor:hassewitt}
Suppose $p \geq 2g$. 
We set 
$\mathbf{e}^{(k)} = \mathbf{e}^{[p^k]}$
for all $k \in \Z_{\geq 0}$
(see \eqref{eq:decofpower-00-0}).
\begin{enumerate}
\item
The matrix $\mathbf{e}^{(1)}$
is the Hasse-Witt matrix of $X$
with respect to the Hermite basis.
\item
The following conditions are equivalent:
\begin{enumerate}
\item
$X$ is ordinary; 
\item
$\det(\mathbf{e}^{(1)}) \in F^*$;
\item
$\det(\mathbf{e}^{(k)}) \in F^*$ for all $k$.
\end{enumerate}
\end{enumerate}
\end{proposition}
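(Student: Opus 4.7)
The plan is that both parts follow mechanically from Proposition \ref{thm:hassewitt} applied to various $m$, combined with the iterated Cartier formula \eqref{eq:hw-mat-prod}; no essentially new idea is required.

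For (1), I would apply Proposition \ref{thm:hassewitt} with $m=p$, giving $\mathbf{e}^{(1)} = \mathbf{e}^{[p]} = (c_{i,p\mu_j})_{i,j=1}^g$. The paragraph immediately preceding \eqref{eq:hw-mat-prod} identifies this very matrix as the Hasse-Witt matrix of $X$ with respect to the Hermite basis, so (1) is immediate. For the equivalence (a) $\iff$ (b) in part (2), one unwinds definitions: $X$ is ordinary iff its Hasse-Witt invariant equals $g$, iff the Hasse-Witt matrix has full rank $g$ over $F$, iff $\det(\mathbf{e}^{(1)}) \in F^*$ by (1).

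The substantive step is (b) $\iff$ (c). The direction (c) $\Rightarrow$ (b) is trivial by specializing to $k=1$. For the converse, I would apply Proposition \ref{thm:hassewitt} to $m = p^k$ to get $\mathbf{e}^{(k)} = (c_{i,p^k\mu_j})$, and then invoke \eqref{eq:hw-mat-prod} to factor
\[
\mathbf{e}^{(k)} = \mathbf{e}^{(1)} \cdot (\mathbf{e}^{(1)})^{\{p\}} \cdots (\mathbf{e}^{(1)})^{\{p^{k-1}\}},
\]
where $(\mathbf{e}^{(1)})^{\{p^l\}}$ denotes the matrix obtained by raising each entry of $\mathbf{e}^{(1)}$ to the $p^l$-th power. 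Taking determinants yields
\[
\det(\mathbf{e}^{(k)}) = \prod_{l=0}^{k-1} \det(\mathbf{e}^{(1)})^{p^l},
\]
so $\det(\mathbf{e}^{(k)})$ is nonzero for every $k$ as soon as $\det(\mathbf{e}^{(1)})$ is.

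There is no genuine obstacle here; the proof is a short bookkeeping exercise around the two results just recalled. The only point that merits a moment of care is the Frobenius twist appearing in \eqref{eq:hw-mat-prod}: since $F$ is perfect, the map $x \mapsto x^p$ on $F$ is bijective, so raising matrix entries to a power of $p$ preserves non-vanishing of the determinant, and the product identity above therefore really does give an equivalence rather than only an implication.
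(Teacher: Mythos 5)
Your proof is correct and follows exactly the route the paper intends: its own proof consists of the single sentence that the proposition is "an immediate consequence of Proposition \ref{thm:hassewitt} and \eqref{eq:hw-mat-prod}," and your argument simply spells out those two ingredients (the identification $\mathbf{e}^{(k)}=(c_{i,p^k\mu_j})$ and the determinant of the Frobenius-twisted product). Nothing to add.
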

\begin{proof}
This is an immediate consequence of 
Proposition \ref{thm:hassewitt}
and \eqref{eq:hw-mat-prod}.
\end{proof}

\begin{remark}
In loc. cit., St\"ohr and Viana also deal with the case $p < 2g$,
but we will not need this result.
\end{remark}

\section{$p$-adic theory of Sato Grassmannian}\label{sect:p-adicsatograss}
In this section, $p$ is a fixed prime number and
$K$ is a finite extension of $\Q_p$.
We write $| \cdot |$ for the absolute value on $K$
such that $|p|=1/p$.
We set $O_{K} = \{ a \in K ~|~ |a| \leq 1 \},~
\sM_{K} = \{ a \in K ~|~ |a| < 1 \}$,
and $\F = O_{K}/\sM_{K}$.

\subsection{The reduction map}\label{sect:padic0}
We study the relation 
between $\Gr^{\alg}(K)$ and $\Gr^{\alg}(\F)$.
For $a = \sum a_i T^i \in K((\tt))$,
we write 
$|| a || := \sup_{i \in \Z} ~
|a_i| \in \R_{\geq 0} \cup \{ \infty \}$.

\begin{definition}\label{def:bounded}
Let $V \in \Gr^{\alg}(K)$.
Let $\{ v_i(T) \}_{i=1}^{\infty}$
be the standard basis of $V$ (see \S \ref{sect:nongapsetc}).
(Note that $||v_i||=1$ if and only if $||v_i|| \leq 1$ because $v_i$ 
is supposed to be monic.)
\begin{enumerate}
\item
We call $V$ {\it bounded} if 
$||v_i||<\infty$ for all $i \in \Z_{>0}$
(hence $||v||<\infty$ for all $v \in V$).
\item
We call $V$ {\it integral} if 
$V$ is bounded and $||v_i|| \leq 1$
for all sufficiently large $i \in \Z_{>0}$.
\item
We call $V$ {\it strictly integral} if 
$V$ is bounded and $||v_i|| \leq 1$
for all $i \in \Z_{>0}$.
\end{enumerate}
We set 
$\Gr^{\alg, \int}(K) := \{V \in \Gr^{\alg}(K) ~|~ V$ is integral $\}$.
\end{definition}

We shall rewrite these properties 
using the {\it reduction map}
\begin{equation}\label{eq:red}
 \red : O_K[[\tt]][T] \to \F((\tt)), \quad
\sum a_i T^i \mapsto \sum (a_i \bmod \sM_K) T^i.
\end{equation}
For a subset $V$ of $K((\tt))$, 
we define
\begin{equation}\label{eq:v-red-def}
\begin{split}
 O(V) &:= \{ v \in V ~|~ || v|| \leq 1 \} =
  V \cap O_K[[\tt]][T],
\\
  V^{\red} &:= \{ \red v \in \F((\tt)) ~|~ v \in O(V) \}.
\end{split}
\end{equation}
The restriction of \eqref{eq:red} induces
a surjection $O(V) \to V^{\red}$.
For $n \in \Z$, we set 
\begin{equation}\label{eq:ven}
 V_n := V \cap T^n K[[\tt]],
\qquad
 (V^{\red})_n := V^{\red} \cap T^n \F[[\tt]].
\end{equation}
The definitions in \ref{def:bounded} are
best understood in terms of the Maya diagrams
of $V$ and $V^{\red}$
(see \S \ref{sect:satograss}).

\begin{proposition}\label{prop:integral}
Let $V$ be a bounded
element of $\Gr^{\alg}(K)$.
\begin{enumerate}
\item 
The following conditions are equivalent:
\begin{enumerate}
\item $V$ is integral;
\item the reduction map induces
surjective maps $O(V_n) \to (V^{\red})_n$
for all sufficiently large $n \in \Z_{>0}$;
\item $\M(V^{\red}) \in \Maya$ and $i(V) = i(V^{\red})$.
\end{enumerate}
If these conditions hold,
we have $\kappa(V) \leq \kappa(V^{\red})$.
\item 
The following conditions are equivalent:
\begin{enumerate}
\item $V$ is strictly integral;
\item the reduction map induces
surjective maps $O(V_n) \to (V^{\red})_n$
for all $n \in \Z_{>0}$;
\item $\M(V)=\M(V^{\red})$;
\item[(c')] $\M(V) \supset \M(V^{\red})$.
\end{enumerate}
\end{enumerate}
\end{proposition}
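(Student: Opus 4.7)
The plan is to analyze the standard basis $\{v_i(T)\}$ of $V$ under reduction, exploiting monicity and the triangularity relation $v_{i, s_j} = 0$ for $i > j$. When the $v_i$ lie in $O(V)$, these properties transfer directly to the reductions $\red v_i \in V^{\red}$, which then determine the structure of $V^{\red}$.

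For part (2), I would first prove (a)$\Rightarrow$(c) by showing that any $u \in O(V)$ expands in the standard basis as $u = \sum a_i v_i$ with all $a_i \in O_K$. This proceeds by descending induction on the leading term: by triangularity, the coefficient of $T^{s_k}$ in $u$ equals $a_k$, forcing $|a_k| \leq ||u|| \leq 1$; subtracting $a_k v_k \in O(V)$ and iterating yields the claim. Thus $\{\red v_i\}$ spans $V^{\red}$, serves as its standard basis, and gives $\M(V) = \M(V^{\red})$. Implication (a)$\Rightarrow$(b) then follows by truncating the expansion at degree $n$, and (b)$\Rightarrow$(c') is immediate since the lift of a degree-$s$ element of $V^{\red}$ must have degree exactly $s$ in $V$. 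For (c')$\Rightarrow$(a), I argue by contrapositive: if $||v_k|| > 1$ for some $k$, scale $v_k$ to $\tilde{v}_k = \pi v_k \in O(V)$ with $|\pi| < 1$, so that $\red \tilde{v}_k$ has degree $s'_k < s_k$; the triangularity $v_{k, s_j} = 0$ for $j < k$ forces $s'_k \notin \{s_1, \ldots, s_{k-1}\}$, yielding $s'_k \in \M(V^{\red}) \setminus \M(V)$, contradicting (c').

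For part (1), the same arguments go through with the modification that only the tail $\{v_i\}_{i > N}$ is controlled. The tail reductions $\{\red v_i\}_{i > N}$ deliver degrees $\{s_i : i > N\}$ to $\M(V^{\red})$, so $\M(V)$ and $\M(V^{\red})$ agree on $\Z_{> s_N}$; since the index $i(M)$ of a Maya diagram is an asymptotic invariant determined by this tail, the agreement yields $\M(V^{\red}) \in \Maya$ and $i(V) = i(V^{\red})$. The inequality $\kappa(V) \leq \kappa(V^{\red})$ then follows because the finitely many scaled reductions $\red \tilde{v}_i$ for $i \leq N$ can only introduce extra low-degree elements to $\M(V^{\red})$, forcing $s_i(V^{\red}) \leq s_i(V)$ for all $i$.

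The main obstacle is the delicate bookkeeping in the reverse implications for (1): showing that the conditions in (c) force integrality requires arguing that if $||v_k|| > 1$ for infinitely many $k$, the accumulation of their scaled reductions produces enough new low-degree elements in $\M(V^{\red})$ to break either the Maya condition or the index equality $i(V) = i(V^{\red})$. A pigeonhole-style argument tracking how the degrees $s'_k$ distribute relative to $\M(V)$ should close this gap, but the analysis is finer than in (2) because triangularity alone no longer suffices when infinitely many basis vectors have inflated norm.
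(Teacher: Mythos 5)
Your treatment of part (2) is correct and is essentially the paper's argument: the identity $a_j = w_{s_j}$ for the standard-basis coefficients of any $w \in O(V)$ (forced by monicity and the triangularity $v_{i,s_j}=0$) gives (a)$\Rightarrow$(b),(c), and the contrapositive of (c')$\Rightarrow$(a) by scaling a basis vector of norm $>1$ is exactly the paper's step.

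Part (1) is where the genuine gaps lie, and you have sensed them without closing them. First, the claim that the index of a Maya diagram is ``an asymptotic invariant determined by this tail'' is false: $i(M)=|M\cap\Z_{\le 0}|-|\Z_{>0}\setminus M|$ changes if one adjoins a single negative element, so agreement of $\M(V)$ and $\M(V^{\red})$ on $\Z_{>s_N}$ does not give $i(V)=i(V^{\red})$. One must show $|\M(V^{\red})\cap\Z_{\le s_N}|=N$, and the lower bound does not follow from listing the scaled reductions $\red\tilde v_i$, because their degrees need not be distinct (take $v_1=1+p^{-1}T^{-1}$ and $v_2=T^{s_2}+p^{-1}T^{-1}$: both scaled reductions equal $T^{-1}$). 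Second, your plan for (c)$\Rightarrow$(a) --- pigeonhole on the degrees $s'_k$ of scaled reductions of infinitely many bad $v_k$ --- fails for the same reason: all the $s'_k$ can coincide, so no accumulation of new degrees is produced unless one passes to linear combinations; you yourself flag this step as open. The paper closes part (1) with a device you already possess from part (2): since $a_i=w_{s_i}\in O_K$ for every $w\in O(V)$, the decomposition $w=u+v'$ with $u\in U:=\langle v_1,\dots,v_N\rangle$ and $v'\in V':=\langle v_i\rangle_{i>N}$ has $v'\in O(V')$ and $u\in O(U)$ once the tail $V'$ is strictly integral; hence $V^{\red}=U^{\red}+V'^{\red}$, part (2) disposes of $V'$, and the finite-dimensional head $U$ is handled by the fact that the reduction of the unit ball of a finite-dimensional normed $K$-space has full $\F$-dimension. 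Running this splitting with the cut-point $N$ chosen according to which of (a), (b), (c) is assumed is the paper's proof of all three implications. The inequality $\kappa(V)\le\kappa(V^{\red})$ likewise requires the two-sided count $\dim(V^{\red})_n\ge\dim V_n$ (always) and $\le$ (for large $n$, under (b)), i.e.\ $\bar s_i\le s_i$ together with equal indices, not merely the heuristic that low-degree elements get added.
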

\begin{proof}
Let 
$\{ v_i(T) \}_{i=1}^{\infty}$ 
the standard basis of $V$,
and let
$\{ s_i \}_{i=1}^{\infty}$ 
(resp. $\{ \bar{s}_i \}_{i=1}^{\infty}$)
be the strictly increasing sequence of integers
such that 
$\M(V) = \{ s_i ~|~ i \in \Z_{>0} \}$
(resp. 
$\M(V^{\red})$ $= \{ \bar{s}_i ~|~ i \in \Z_{>0} \}$).

First we prove (2).
If (a) holds,
then $\{ v_i(T) \}_{i=1}^{\infty}$ is an $O_K$-basis of $O(V)$.
Hence (b) and (c) follow immediately.
It is easy to see
that (c') is implied by either of (b) or (c).
It remains to prove the implication (c') $\Rightarrow$ (a).
We prove its contraposition.
Suppose that $||v_n(T)|| > 1$ for some $n$.
Take $c \in O_K$ such that $|c|=||v_n(T)||^{-1}$.
Then $\deg \red(c v_n(T))$ is an element of 
$\M(V^{\red})$ which does not belong to $\M(V)$.
This completes the proof of (2).

We prove (1).
Suppose (a).
Take $n_1 \in \Z_{>0}$ such that
$||v_i(T)||=1$ for all $i \geq n_1$.
Let $V'$ be the $K$-linear span of $\{ v_i(T) \}_{i=n_1}^{\infty}$.
Then $V'$ is a strictly integral element of $\Gr^{\alg}(K)$.
Thus we get the surjectivity of
$O(V_{n}') \to (V^{' \red})_n$ for all $n$ by (2).
On the other hand, let $U$ be the $K$-linear span of 
$v_1(T), \cdots, v_{n_1-1}(T)$.
As we have remarked before \eqref{eq:ven},
the reduction map $O(U) \to U^{\red}$ is surjective.
It follows that
$O(U_n) \to (U^{\red})_n$ is surjective
for any $n \geq s_{n_1-1}$
because we have $U_n=U$ and $U^{\red}=(U^{\red})_n$.
We conclude 
that $O(V_n) \to (V^{\red})_n$ is surjective
for all $n \geq \max(n_1, s_{n_1-1})$.

Suppose (b).
Take $n_1 \in \Z_{>0}$ such that
$O(V_n) \to (V^{\red})_n$ is surjective for all $n \geq n_1$.
Let $i_1$ be the minimal integer such that $s_{i_1} \geq n_1$.
Let $V'$ be the $K$-linear span of 
$\{ v_i(T) \}_{i=i_1}^{\infty}$.
Then $V'$ is a strictly integral element of $\Gr^{\alg}(K)$,
and we get $\M(V')=\M(V^{' \red})$ by (2).
On the other hand, let $U$ be the $K$-linear span of 
$v_1(T), \cdots, v_{i_1-1}(T)$.
Then $|\M(U^{\red})|=i_1-1$ and
$\M(U^{\red}) \subset (-\infty, s_{i_1}]$.
This proves (c).

Suppose (c).
Take $i_1 \in \Z_{>0}$ such that
$s_i=\bar{s}_i=i-i(V)$ for all $i \geq i_1$.
Let $V'$ be the $K$-linear span of 
$\{ v_i(T) \}_{i=i_1}^{\infty}$.
Then $V'$ is a strictly integral element of $\Gr^{\alg}(K)$ by (2),
and hence $V$ is integral.

If the condition (b) is satisfied,
then $s_i \geq \bar{s}_i$ holds for all $i$.
Hence $\kappa(V) \leq \kappa(V^{\red})$.
\end{proof}

\subsection{$p$-adic analytic Sato Grassmannian}
\label{sect:padicsatograss}
In addition to the field of Laurent power series $K((\tt))$,
we will work with another field
\begin{equation*}
H:=\left\{ \sum_{i=-\infty}^{\infty}a_iT^i \ \biggm|\ 
a_i\in K,\ 
\sup_{i=-\infty}^{\infty}|a_i|<\infty,\ 
\lim_{i\to\infty}|a_i|=0  \right\}.
\end{equation*}
This is a complete discrete valuation field
whose absolute value is given by
\begin{equation*}
\left\|\sum_{i=-\infty}^{\infty} a_i T^i\right\| 
:= \sup_{i=-\infty}^{\infty} |a_i|.
\end{equation*}
We regard $H$ as an ultrametric Banach algebra over $K$ 
equipped with a norm $\| \cdot \|$.
We define closed subspaces $H_+$ and $H_-$ of $H$ by
\[ H_+ := \{ \sum_{i=-\infty}^{\infty} a_iT^i \in H ~|~ a_i=0 ~(i \leq 0) \}, 
\quad
   H_- := \{ \sum_{i=-\infty}^{\infty} a_iT^i \in H ~|~ a_i=0 ~(i > 0) \}
\]
so that we have $H=H_- \oplus H_+$.

Following Anderson \cite[\S 3.1]{Anderson},
we define the {\it $p$-adic Sato Grassmannian} $\Gr^{\an}(K)$ 
to be 
the set of all $K$-linear subspaces $W$ of $H$
such that 
$W$ is the image of an injective $K$-linear map $w : H_+ \to H$
satisfying the following condition:
there exist $i_0 \in \Z$,
a $K$-linear operator $w_1 : H_+ \to H_-$ with $\|w_1 \|\le 1$,
and
a $K$-linear endomorphism $w_2$ on $H_+$ with $\|w_2 \|\le1$
that is a uniform limit of bounded $K$-linear operators of finite rank
(i.e. {\it completely continuous}),
such that $T^{i_0}w : H_+ \to H=  H_- \oplus H_+$
agrees with $w_1 \oplus (1+w_2)$.

This sophisticated definition
admits an elementary interpretation 
 as follows \cite[\S 3.2]{Anderson}.
We regard both $K((\tt))$ and $H$ as
$K$-linear subspaces of $\prod_{i \in \Z} K T^i$
(which itself is no longer a ring).
For a subset $W$ of $H$,
we set $W^{\alg} := W \cap K((\tt))$.
For a subset $V$ of $K((\tt)) \cap H$,
the closure of $V$ in $H$ is denoted by $V^{\an}$.
Note that if $V \in \Gr^{\alg}(K)$ is bounded
(see Definition \ref{def:bounded}),
then $V \subset K((\tt)) \cap H$.

\begin{proposition}[Anderson \cite{Anderson}, \S 3.2]
\begin{enumerate}
\item
If $W \in \Gr^{\an}(K)$, then one has
$W^{\alg} \in \Gr^{\alg}(K)$.
This defines an injective map
$\alg : \Gr^{\an}(K) \to \Gr^{\alg}(K)$.
\item
Let $V \in \Gr^{\alg}(K)$.
The following conditions are equivalent:
\begin{enumerate}
\item There exists $W \in \Gr^{\an}(K)$ such that $V=W^{\alg}$.
\item $V$ is integral (see Definition \ref{def:bounded}).
\end{enumerate}
\end{enumerate}
Consequently, we get a bijective map
\begin{equation}\label{eq:gralg-gran}
\alg : \Gr^{\an}(K) \to \Gr^{\alg, \int}(K)
\end{equation}
whose inverse is given by $V \mapsto V^{\an}$.
\end{proposition}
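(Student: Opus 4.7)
The plan is to leverage the Fredholm structure implicit in the definition of $\Gr^{\an}(K)$. For $W \in \Gr^{\an}(K)$ with parameterization $w : H_+ \to H$ satisfying $T^{i_0}w = w_1 \oplus (1+w_2)$, the operator $1+w_2$ on the ultrametric Banach space $H_+$ is Fredholm of index zero because $w_2$ is completely continuous (a uniform limit of finite-rank operators); in particular $\ker(1+w_2)$ and $\coker(1+w_2)$ are finite-dimensional over $K$. This Fredholm information is what mediates between the analytic world of $H$ and the algebraic world of $K((\tt))$.

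For part (1), I would check that $W^{\alg} = W \cap K((\tt))$ lies in $\Gr^{\alg}(K)$ by identifying $W^{\alg} \cap K[[\tt]]$, up to a shift by $T^{i_0}$, with a finite-dimensional space controlled by $\ker(1+w_2)$ together with the part of $w_1$ whose image sits in polynomials of $K((\tt))$. Dually, the quotient $K((\tt))/(W^{\alg}+K[[\tt]])$ embeds into $H/(W+H_+)$, which up to $T^{i_0}$ is a quotient of $\coker(1+w_2)$, hence finite-dimensional. Injectivity of $\alg$ follows from density: since $w(H_+)=W$ and $K[T]$ is dense in $H_+$, the image $w(K[T])\subset W^{\alg}$ is dense in $W$ under $\|\cdot\|$, so $W^{\alg}$ determines $W$ as its closure.

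For (a) $\Rightarrow$ (b) of part (2), given $V=W^{\alg}\subset H$, every $v\in V$ has $\|v\|<\infty$ automatically. For the standard basis $\{v_i(T)\}_{i=1}^{\infty}$, once $i$ is large enough that $s_i=\deg v_i(T)$ lies outside the finite-dimensional exceptional locus governed by $w_1$ and by $\ker(1+w_2)$, pulling $v_i(T)$ back through $T^{i_0}w$ expresses it as the image of a monic polynomial in $H_+$ of norm $1$ under an operator of norm $\leq 1$, forcing $\|v_i(T)\|\leq 1$. This is exactly the integrality condition of Definition~\ref{def:bounded}.

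For (b) $\Rightarrow$ (a), given an integral $V$ with standard basis $\{v_i(T)\}$, I would take $W:=V^{\an}$, the closure of $V$ in $H$, and explicitly build the parameterizing map $w$. Splitting $\Z_{>0}$ into a finite head (where $\|v_i\|>1$ or $s_i$ is exceptional) and an infinite tail where $\|v_i\|\leq 1$ and $s_i=i-i(V)$, I would send the standard basis $\{T^{i-1}\}$ of $H_+$ to suitably renormalized $v_i$; the tail contribution realizes $1+w_2$ with $w_2$ completely continuous (uniform limit of finite-rank operators coming from the perturbation $v_i(T)-T^{s_i}$), while the head feeds into $w_1$ together with a finite-rank modification of the identity. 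This gives $W\in\Gr^{\an}(K)$ with $W^{\alg}\supset V$, and equality follows from Proposition~\ref{prop:integral} comparing the Maya diagrams of $V$ and $W^{\alg}$. The main obstacle is the bookkeeping in this construction: one must verify that $w_2$ is genuinely completely continuous, not just bounded, and that $T^{i_0}w$ truly decomposes as $w_1\oplus(1+w_2)$ with the prescribed norm bounds; this pins down the choice of $i_0$ and the precise index where the head meets the tail. Once this construction is in place, that $V\mapsto V^{\an}$ is a two-sided inverse of $\alg$ follows from the density argument and the fact that $W^{\alg}$ in turn has closure $W$.
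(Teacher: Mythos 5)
The paper itself supplies no argument for this proposition---it is quoted from \cite[\S 3.2]{Anderson}---so your reconstruction can only be measured against the statement itself. Your overall architecture ($p$-adic Fredholm theory for $1+w_2$, a finite exceptional locus, and the closure construction for (b) $\Rightarrow$ (a)) is the right one, but two of your key steps fail as written, and both failures have the same source: you treat $1+w_2$ as if it preserved $K((\tt))$ and admitted an inverse of norm $\leq 1$. Neither is true in general.

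Concretely: (i) your injectivity/density argument rests on the inclusion $w(K[T])\subset W^{\alg}$, which is false. For a polynomial $P\in H_+\cap K((\tt))$, the $H_+$-component of $T^{i_0}w(P)$ is $P+w_2(P)$, and $w_2(P)$ is in general an infinite series in positive powers of $T$, hence lies outside $K((\tt))$ (note that $H_+\cap K((\tt))$ consists only of polynomials). The correct description is that, up to the $T^{i_0}$ twist, $W^{\alg}$ corresponds to $(1+w_2)^{-1}\bigl(H_+\cap K((\tt))\bigr)$, and density of $W^{\alg}$ in $W$ must be extracted from the fact that $\Image(1+w_2)$ is closed of finite codimension, so that a dense subspace of $H_+$ meets it densely. (ii) In (a) $\Rightarrow$ (b) you bound $\|v_i\|$ by presenting $v_i$ as the image of a monic polynomial of norm $1$ under an operator of norm $\leq 1$; but $v_i$ lies in the \emph{image} of $w$, so what you need is a bound on the preimage $y$ with $(1+w_2)(y)=$ the $H_+$-component of $T^{i_0}v_i$, i.e.\ a bound on $(1+w_2)^{-1}$, which does not follow from $\|w_2\|\leq 1$ (e.g.\ $w_2=(p-1)\cdot\mathrm{pr}$ for a rank-one projector has norm $1$, yet $1+w_2$ contracts a vector by $|p|$). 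The missing ingredient in both places is the decomposition $w_2=F+E$ with $F$ of finite rank and $\|E\|<1$, available precisely because $w_2$ is completely continuous: then $1+E$ is invertible with $\|(1+E)^{-1}\|\leq 1$, and the finitely many directions touched by $F$ constitute the true exceptional locus---which is strictly larger than the one you describe, since $\ker(1+w_2)$ and $w_1$ alone do not account for directions where $1+w_2$ is injective but contracting. With this decomposition in hand, steps (i) and (ii), as well as the verification in your converse construction that the resulting $w_2$ is completely continuous of norm $\leq 1$, all go through.
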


For a subset $W$ of $H$,
we set $O(W) := \{ w \in W ~|~ ||w|| \leq 1 \}$
and $\sM(W) := \{ w \in W ~|~ ||w|| < 1 \}$.
There is a canonical surjective map
\begin{equation}\label{eq:red2}
 \red : O(H) \to \F((\tt)), \qquad
\sum a_i T^i \mapsto \sum (a_i \bmod \sM_K) T^i
\end{equation}
(which agrees with \eqref{eq:red} on
$O(H) \cap O_K[[\tt]][T]$).

\subsection{The $p$-adic loop group}
For a real number $\rho$ such that $0 \leq \rho < 1$,
we define
\begin{equation}\label{eq:def-of-gamma-rho}
 \Gamma_{\rho} := 
\{ h(T)=\sum_{i=-\infty}^{\infty} h_iT^i \in H^* ~|~
||h(T)|| = |h_0| = 1, ~
|h_i| \leq \rho^i ~\text{for all}~ i > 0 \}.
\end{equation}
(When $\rho=0$, the group $\Gamma_0$ is
written by $\Gamma_-$ in \cite{Anderson}.)
The {\it $p$-adic loop group} $\Gamma$ is 
defined to be the union of $\Gamma_{\rho}$
for all $0 \leq \rho<1$.

We have an action of $\Gamma$ on $\Gr^{\an}(K)$
given by
$h(T) W := \{ h(T) w(T) \in H ~|~ w(T) \in W \}$
for any $h(T) \in \Gamma$ and $W \in \Gr^{\an}(K)$.
This induces an action of $\Gamma$ on $\Gr^{\alg, \int}(K)$
through \eqref{eq:gralg-gran}.
Explicitly, 
for $V \in \Gr^{\alg, \int}(K)$ and $h(T) \in \Gamma$
we have
\begin{equation}\label{eq:action1}
  h(T) V = \{ h(T) w(T) \in H ~|~ w(T) \in V^{\an} \} \cap K((\tt)). 
\end{equation}
The action of 
\[ \Gamma_+ := \{ h(T) = \sum h_i T^i \in \Gamma ~|~
h_0=1, ~ h_i=0 ~\text{for all}~i<0 \}
\]
on $\Gr^{\alg, \int}(K)$ reduces to the trivial action on $\Gr^{\alg}(\F)$,
that is, 
\begin{equation}\label{eq:trivialreductionaction}
(h(T)V)^{\red} = V^{\red}
\qquad \text{for any}~ V \in \Gr^{\alg, \int}(K)
~\text{and}~h(T) \in \Gamma_+.
\end{equation}

\subsection{Schur function}
Let $\lambda$ be a partition (see \S \ref{sect:par-maya})
and $h=\sum h_i T^i \in \Gamma_+$.
The {\it Schur function} $S_{\lambda}(h)$ is defined by
\begin{equation}\label{eq:schur}
S_{\lambda}(h) := \det(h_{\lambda_i -i+j})_{i, j=1}^{l(\lambda)}
\in \Z[h_1, h_2, \cdots].
\end{equation}
If we declare the degree of $h_i$ to be $i$,
then $S_{\lambda}(h)$ is
homogeneous of degree $|\lambda|$.
It follows that, if $h(T) \in \Gamma_{\rho} \cap \Gamma_+$
for some $0<\rho<1$, then we have
\begin{equation}\label{eq:schur-evaluate}
|S_{\lambda}( h(T) )| \leq \rho^{|\lambda|}.
\end{equation}

\subsection{Sato tau function}
Let $W \in \Gr^{\an}(K)$.
Anderson constructed
the {\it Sato tau function} $\tau_W : \Gamma \to K$
which plays a central role in his theory.
We refer to \cite[\S 3.3]{Anderson}
for its definition.
The important properties of the Sato tau function
are summarized in the following theorem:

\begin{theorem}[Anderson \cite{Anderson}, \S 3.3, \S 3.4]\label{thm:tau}
Let $W \in \Gr^{\an}(K)$.
\begin{enumerate}
\item
For $h(T) \in \Gamma$,
we have $\tau_W(h(T))=0$ 
if and only if
\[ h(T) W^{\alg} \cap T^{i(W^{\alg})} K[[\tt]] \not= 0. \]
(Compare Theorem \ref{thm:krichever} (2d).)
\item
Suppose that $W^{\alg}$ is strictly integral
(see Definition \ref{def:bounded}).
Then the following equality (Sato expansion) holds
for any $h(T) \in \Gamma_+$:
\[ \tau_W(h(T)) 
= \sum_{\lambda \in \Par} P_{\lambda}(W^{\alg}) S_{\lambda}(h).
\]
Here $P_{\lambda}(W^{\alg})$ is
the Pl\"ucker coordinate \eqref{eq:plucker}
and $S_{\lambda}(h)$ is the Schur function \eqref{eq:schur}.
\end{enumerate}
\end{theorem}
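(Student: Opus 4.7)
The plan is to adapt Sato's classical tau function construction to the $p$-adic analytic framework of Section \ref{sect:padicsatograss}. Concretely, I would define $\tau_W(h)$ as a Fredholm-type determinant, then read off parts (1) and (2) from that construction.

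To set up the determinant, use the defining property of $W \in \Gr^{\an}(K)$: there is $i_0 \in \Z$ and an injection $w : H_+ \to H$ such that $T^{i_0} w = w_1 \oplus (1 + w_2)$ with $w_1$ bounded and $w_2$ completely continuous. For $h \in \Gamma$, multiplication $m_h$ is a bounded $K$-linear operator on $H$ with norm $1$, and the composite
\begin{equation*}
H_+ \xrightarrow{w} W \xrightarrow{m_h} H \xrightarrow{\pi_+} H_+,
\end{equation*}
after an index-dependent twist by a power of $T$, takes the form $1 + (\text{completely continuous})$. Its determinant is well-defined by Serre's $p$-adic Fredholm theory, and setting $\tau_W(h)$ equal to this determinant, with normalization tuned so that $\tau_W$ specializes correctly at $h=1$, gives the Sato tau function.

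For part (1), a Fredholm determinant of the form $1+\text{(completely continuous)}$ vanishes exactly when the operator has a non-trivial kernel. A kernel element corresponds to a non-zero $v \in h(T) W^{\an}$ that projects to zero in $H_+$ after the index shift, i.e.\ lies in the closure of $T^{i(W^{\alg})} K[[\tt]]$ inside $H$. A density argument then upgrades the analytic intersection to an algebraic one: because $W^{\alg}$ is dense in $W^{\an}$ and $K((\tt)) \cap H$ is closed in $H$, any such $v$ can be replaced by an element of $h(T) W^{\alg} \cap T^{i(W^{\alg})} K[[\tt]]$, and the logic is reversible. For part (2), I would expand the Fredholm determinant by Leibniz: choose the standard basis $\{v_i(T)\}_{i \geq 1}$ of $W^{\alg}$ (well-defined by strict integrality, with each $\|v_i\| \le 1$) and represent the operator in this basis against the basis $\{T^j\}_{j \in \Z}$ of $H$. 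The resulting infinite signed sum of minors organizes itself by Maya diagrams via the bijection \eqref{eq:maya-par}: grouping summands according to the partition $\lambda = \kappa(M) \in \Par$ factors each term into a minor of the coefficient matrix $(v_{ij})$, which is the Pl\"ucker coordinate $P_{\lambda}(W^{\alg})$ of \eqref{eq:plucker}, multiplied by a minor of $h$-coefficients, which is the Schur function $S_{\lambda}(h)$ of \eqref{eq:schur}. Termwise convergence of this sum in $K$ follows from \eqref{eq:schur-evaluate} $|S_{\lambda}(h)| \le \rho^{|\lambda|}$ together with the bound $|P_{\lambda}(W^{\alg})| \le 1$ coming from strict integrality applied to \eqref{eq:plucker}.

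The main obstacle is harmonizing the $p$-adic Fredholm formalism with the combinatorial Sato expansion. Part (1) requires care in the density step so that an analytic kernel vector truly yields an algebraic intersection element, which rests on the comparison between $\Gr^{\an}(K)$ and $\Gr^{\alg, \int}(K)$ in \eqref{eq:gralg-gran}. Part (2) uses strict integrality in an essential way: without it the Pl\"ucker coordinates need not be bounded $p$-adically, and the Sato expansion would fail to converge term by term. Once these two points are settled, the rest of the argument is a formal transcription of the classical Sato identity to the ultrametric setting.
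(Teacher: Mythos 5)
The paper offers no proof of this theorem: it is imported wholesale from Anderson (\S 3.3--3.4 of \cite{Anderson}), where the tau function is indeed defined as a $p$-adic Fredholm determinant in the sense of Serre and both assertions are proved essentially as you outline. So your sketch is a reconstruction of the cited argument rather than an alternative to anything in this paper, and its overall architecture (determinant of $1+(\text{completely continuous})$, vanishing iff non-injective for an index-zero operator, Leibniz expansion regrouped by Maya diagrams into Pl\"ucker times Schur, termwise bounds $|P_\lambda(W^{\alg})|\le 1$ and $|S_\lambda(h)|\le\rho^{|\lambda|}$ for convergence) is sound. One intermediate claim is false, though: $K((\tt))\cap H$ is \emph{not} closed in $H$ --- the partial sums of $\sum_{j\ge 1}p^jT^j$ lie in $K((\tt))\cap H$ but converge in $H$ to an element with infinitely many positive-degree terms. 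Fortunately your part (1) does not need this: a kernel vector of the twisted operator lies in $T^{i(W^{\alg})}H_-$ (up to the shift), hence has support bounded above in degree, hence automatically belongs to $K((\tt))$; combined with $h(T)W^{\an}\cap K((\tt))=h(T)W^{\alg}$, which is precisely how the action \eqref{eq:action1} is defined, this yields the algebraic intersection element directly, with no density argument. You should also state explicitly that the twisted composite has Fredholm index zero, since that is what converts ``determinant vanishes'' into ``kernel is non-trivial''; and in part (2) note that the regrouping of the infinite Leibniz sum by partitions is legitimate because the ultrametric bounds make the double series converge unconditionally.
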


\begin{corollary}\label{cor:nonvanish}
Let $W \in \Gr^{\an}(K)$ be such that
$W^{\alg}$ is strictly integral.
Let $0< \rho < 1$
and suppose $h(T) \in \Gamma_+ \cap \Gamma_{\rho}$ 
satisfies $|S_{\kappa}(h(T))|=\rho^{|\kappa|}$
with $\kappa := \kappa(W^{\alg})$.
Then we have
$h(T)W^{\alg} \cap T^{i(W^{\alg})} K[[\tt]] = 0.$
\end{corollary}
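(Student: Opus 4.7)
The plan is to combine the two parts of Theorem \ref{thm:tau}. By part (1), it suffices to show $\tau_W(h(T)) \neq 0$, and by part (2), this value is computed by the Sato expansion
\[
\tau_W(h(T)) = \sum_{\lambda \in \Par} P_{\lambda}(W^{\alg}) \, S_{\lambda}(h(T)),
\]
which is available because $W^{\alg}$ is strictly integral and $h(T) \in \Gamma_+$. The strategy is to show that the term at $\lambda = \kappa$ strictly dominates every other term in $p$-adic absolute value, so that the ultrametric inequality forces $|\tau_W(h(T))| = \rho^{|\kappa|} > 0$.

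First I would restrict the sum to partitions $\lambda \geq \kappa$ by invoking Lemma \ref{lem:plucker}, which says $P_{\lambda}(W^{\alg}) = 0$ unless $\lambda \geq \kappa$, and gives the normalization $P_{\kappa}(W^{\alg}) = 1$. Next, I would bound the Pl\"ucker coordinates by noting that strict integrality of $W^{\alg}$ means the standard basis $\{v_i(T)\}$ satisfies $\|v_i\| \leq 1$ for all $i$, so every coefficient $v_{i,j}$ lies in $O_K$; hence the determinantal formula \eqref{eq:plucker} gives $|P_{\lambda}(W^{\alg})| \leq 1$ for every $\lambda$. Combined with the Schur function estimate \eqref{eq:schur-evaluate}, each term of the Sato expansion obeys
\[
|P_{\lambda}(W^{\alg}) \, S_{\lambda}(h(T))| \leq \rho^{|\lambda|}.
\]

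Now observe that if $\lambda \geq \kappa$ componentwise and $\lambda \neq \kappa$, then $|\lambda| > |\kappa|$, and since $0 < \rho < 1$ this forces $\rho^{|\lambda|} < \rho^{|\kappa|}$. Thus every term with $\lambda \neq \kappa$ has absolute value strictly less than $\rho^{|\kappa|}$, while the $\lambda = \kappa$ term has absolute value exactly $|P_{\kappa}(W^{\alg}) S_{\kappa}(h(T))| = |S_{\kappa}(h(T))| = \rho^{|\kappa|}$ by hypothesis. Since the series converges in $K$ (the general term tends to $0$ because $\rho < 1$ and partitions of given weight are finitely many), the ultrametric principle yields $|\tau_W(h(T))| = \rho^{|\kappa|} \neq 0$. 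Applying Theorem \ref{thm:tau} (1) then delivers $h(T) W^{\alg} \cap T^{i(W^{\alg})} K[[\tt]] = 0$, as desired.

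The argument is essentially formal once one has the three ingredients: the Sato expansion, the integrality bound $|P_{\lambda}| \leq 1$, and the fact that $\lambda > \kappa$ strictly increases weight. No step here looks genuinely hard; the only point requiring a moment of care is recording that strict integrality (not merely integrality) is what guarantees $|P_{\lambda}| \leq 1$ uniformly, and that convergence of the Sato expansion lets the ultrametric comparison of the dominant term go through.
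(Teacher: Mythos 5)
Your argument is correct and is essentially the paper's own proof (which follows Anderson's Lemma 3.5.1 verbatim): isolate the $\lambda=\kappa$ term of the Sato expansion using Lemma \ref{lem:plucker} and the bound $|P_{\lambda}(W^{\alg})|\leq 1$ from strict integrality, note that $\lambda>\kappa$ forces $|S_{\lambda}(h(T))|\leq\rho^{|\lambda|}<\rho^{|\kappa|}$, and conclude by the ultrametric inequality and Theorem \ref{thm:tau}. No substantive difference from the paper's route.
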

\begin{proof}
We follow Anderson's proof of \cite[Lemma 3.5.1]{Anderson}.
Since $W^{\alg}$ is strictly integral,
we have $|P_{\lambda}(W^{\alg})| \leq 1$ for all $\lambda$.
Furthermore, Lemma \ref{lem:plucker}
shows that $P_{\kappa}(W^{\alg})=1$ and 
$P_{\lambda}(W^{\alg})=0$ 
unless $\lambda \in \Par$ satisfies $\lambda \geq \kappa$.
On the other hand,
\eqref{eq:schur-evaluate} shows that
if $\lambda \in \Par$ satisfies
$\lambda \not= \kappa$ and $\lambda \geq \kappa$,
then we have
$|S_{\lambda}(h(T))| < |S_{\kappa}(h(T))| = \rho^{|\kappa|}$.
By Theorem \ref{thm:tau} (2),
this proves $\tau_W(h(T)) \not= 0$.
Now Theorem \ref{thm:tau} (1)
completes the proof.
\end{proof}

\subsection{Artin-Hasse exponential}
Put $l(T) := \sum_{k=0}^{\infty} \frac{1}{p^k} T^{p^k}.$
Recall that the Artin-Hasse exponential
$\exp(l(T))$ belongs to $\Z_{(p)}[[T]]$.
Therefore, for any $\pi \in \sM_K$,
the series
\begin{equation}\label{eq:oneparameterloop}
 h(T; \pi) := \exp(l(\pi T)) 
\end{equation}
belongs to $\Gamma_+$.
More precisely,
writing $h(T; \pi) = \sum_{i=0}^{\infty} h_i T^i$
we have
\begin{align}
\label{eq:coeff}
|h_i| \leq |\pi|^i \quad \text{for all}~i \geq 0~
\quad
\text{and}
\quad
h_i = \frac{\pi^i}{i!} \quad \text{for}~ i= 0, 1, \cdots p-1,
\end{align}
whence $h(T; \pi) \in \Gamma_{|\pi|} \cap \Gamma_+$.
For a finite sequence of positive integers
${\boldsymbol \mu}=(\mu_1, \ldots, \mu_g)$
and for $\vec{\pi}=(\pi_i)_{i=1}^g \in \sM_K^{\oplus g}$,
we define
\begin{equation}\label{eq:multi-h}
  h_{{\boldsymbol \mu}}(T; \vec{\pi}) 
  := \prod_{i=1}^g h(T^{\mu_i}; \pi_i) 
   = \exp( \sum_{i=1}^g l(\pi_i T^{\mu_i})) 
   \in \Gamma_{\rho} \cap \Gamma_+,
\end{equation}
where $\rho := \underset{i}{\max}  |\pi_i|^{1/\mu_i}$.

\begin{proposition}\label{prop:nonvanishing}
We suppose one of the following conditions:
\begin{enumerate}
\item
Let $\pi \in \sM_K \setminus \{ 0 \}$ and
set $h(T)=h(T; \pi), ~\rho=|\pi|$. 
Let $\kappa \in \Par$ be a partition satisfying
$\kappa_1+l(\kappa) \leq p$.
\item
Let $g$ be an integer such that $0<g<p$ and
$\vec{\pi}=(\pi_i)_{i=1}^g \in \sM_K^{\oplus g}$.
Suppose $|\pi_i| \leq |\pi_g|$ for all $i=1, 2, \cdots, g-1$.
Set ${\boldsymbol \mu}=(1, 2, \cdots, g), ~
h(T)=h_{{\boldsymbol \mu}}(T; \vec{\pi})$,
$\rho = |\pi_g|^{1/g}$, and 
$\kappa=(g, 0, 0, \cdots)$.
\end{enumerate}
Then we have
$h(T) \in \Gamma_+ \cap \Gamma_{\rho}$ 
and
$|S_{\kappa}(h(T))|=\rho^{|\kappa|}$.
Consequently, 
we have 
$h(T)W^{\alg} \cap T^{i(W^{\alg})} K[[\tt]] =0$
for any $W \in \Gr^{\an}(K)$ such that
$W^{\alg}$ is strictly integral
and $\kappa = \kappa(W^{\alg})$.
\end{proposition}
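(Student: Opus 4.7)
The plan is to apply Corollary~\ref{cor:nonvanish} in both cases, so I must check that $h(T)\in\Gamma_+\cap\Gamma_\rho$ and that $|S_\kappa(h(T))|=\rho^{|\kappa|}$. The membership statement is immediate from the bounds recorded in \eqref{eq:coeff} and \eqref{eq:multi-h}, so the only real work is evaluating the Schur function.

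For (1), the hypothesis $\kappa_1+l(\kappa)\le p$ ensures that every index $\kappa_i-i+j$ appearing in the Jacobi--Trudi matrix \eqref{eq:schur} satisfies $\kappa_i-i+j\le\kappa_1-1+l(\kappa)\le p-1$, so by \eqref{eq:coeff} each entry equals $\pi^{\kappa_i-i+j}/(\kappa_i-i+j)!$ (with the convention that this is $0$ for negative exponents, matching $h_k=0$ for $k<0$). Multiplying by the diagonal matrices $\operatorname{diag}(\pi^{\kappa_i-i})$ on the left and $\operatorname{diag}(\pi^j)$ on the right extracts a factor of $\pi^{|\kappa|}$ and yields
\[ S_\kappa(h(T;\pi)) = \pi^{|\kappa|}\,\det\!\left(\tfrac{1}{(\kappa_i-i+j)!}\right)_{i,j=1}^{l(\kappa)}. \]
I will identify the remaining determinant with $f^\kappa/|\kappa|!$, where $f^\kappa$ is the number of standard Young tableaux of shape $\kappa$: this is the exponential specialization of Jacobi--Trudi, or equivalently the Frobenius formula $s_\kappa\mapsto f^\kappa/|\kappa|!$ at $p_1=1,\,p_k=0$ $(k\ge 2)$. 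The hook length formula then gives $f^\kappa/|\kappa|!=1/\prod_c h(c)$, and the condition $\kappa_1+l(\kappa)\le p$ forces every hook length to satisfy $h(c)\le\kappa_1+l(\kappa)-1\le p-1$, so $\prod_c h(c)$ is a $p$-adic unit. Hence $|S_\kappa(h(T;\pi))|=|\pi|^{|\kappa|}=\rho^{|\kappa|}$.

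For (2), the partition $\kappa=(g,0,\ldots)$ has $l(\kappa)=1$ and $|\kappa|=g$, so $S_\kappa(h)=h_g$ is simply the coefficient of $T^g$ in $h_{\boldsymbol\mu}(T;\vec\pi)=\prod_{i=1}^g h(T^i;\pi_i)$. Expanding, $h_g$ is a sum of $\prod_{i=1}^g \pi_i^{k_i}/k_i!$ over tuples $(k_1,\dots,k_g)\in\Z_{\ge0}^g$ with $\sum i k_i=g$; the constraint $k_i\le g<p$ keeps each $k_i!$ a $p$-adic unit. The unique tuple with $\sum_i k_i=1$ is $k_g=1$, contributing $\pi_g$; any other tuple has $\sum_i k_i\ge2$, and then $|\pi_i|\le|\pi_g|$ bounds its contribution by $\prod|\pi_i|^{k_i}\le|\pi_g|^{\sum k_i}\le|\pi_g|^2<|\pi_g|$. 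The $\pi_g$ term therefore dominates strictly, giving $|h_g|=|\pi_g|=\rho^g=\rho^{|\kappa|}$.

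The main obstacle is the nonvanishing argument in Case (1): one must both recognize the inverse-factorial determinant as $f^\kappa/|\kappa|!$ and then translate the gap condition $\kappa_1+l(\kappa)\le p$ into the $p$-adic invertibility of the product of hook lengths. Case (2) is a direct leading-term calculation, and the final appeal to Corollary~\ref{cor:nonvanish} (which requires only that we know $W^{\alg}$ is strictly integral with $\kappa(W^{\alg})=\kappa$) is formal.
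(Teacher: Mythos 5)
Your proof is correct and follows essentially the same route as the paper: in case (1) both arguments reduce $|S_\kappa(h(T;\pi))|$ to the $p$-adic invertibility of the product of hook lengths of $\kappa$ (the paper simply quotes Anderson's evaluation $S_\kappa(h(T;\pi))=\pi^{|\kappa|}\prod HL(\kappa;(i,j))^{-1}$, which you rederive via Jacobi--Trudi, the standard-tableaux determinant, and the hook length formula), and in case (2) both extract the coefficient of $T^g$ and observe that the $\pi_g$ term dominates ultrametrically. The concluding appeal to Corollary \ref{cor:nonvanish} is identical.
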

\begin{proof}
It is clear from the definition 
that $h(T) \in \Gamma_+ \cap \Gamma_{\rho}$
in both cases.
We prove $|S_{\kappa}(h(T))|=\rho^{|\kappa|}$.
The proof of (1) is again the same as 
Anderson's proof of \cite[Lemma 3.5.1]{Anderson}.
The point is that, by the latter part of \eqref{eq:coeff},
one can combinatorially compute $S_{\lambda}(h(T))$ 
for a small partition $\lambda$.
The result is (see loc. cit.)
\[ S_{\lambda}(h(T)) =  
\Big(\prod_{1 \leq i \leq l(\lambda),~ 1 \leq j \leq \lambda_i} 
HL(\lambda; (i,j)) \Big)^{-1}
\pi^{|\lambda|}
\qquad \text{if}~~ l(\lambda)+\lambda_1 \leq p,
\]
where the {\it hook length}
$HL(\lambda; (i,j))$
is by definition the cardinality of the set
\[ \{ (i', j') \in \Z_{>0} \times \Z_{>0}
~|~ (i'=i ~\text{and}~ j \leq j' \leq \lambda_i) ~\text{or}~ 
(i \leq i' ~\text{and}~j' =j \leq \lambda_{i'}) \}. 
\]
By the assumption $\kappa_1+l(\kappa) \leq p$,
we see that $HL(\kappa; (i, j))$ is a $p$-adic unit
for all $1 \leq i \leq l(\kappa), ~1 \leq j \leq \kappa_i$.
This shows (1).
Next, we consider (2).
In this case, 
$S_{\kappa}(h(T))$ is given by
the coefficient of $T^g$ in $h(T)$.
By assumption, we get 
\[ |S_{\kappa}(h(T))| 
  =
\Big|\sum_{j_1+2j_2+\cdots+gj_g=g} 
   \frac{\pi_1^{j_1} \cdots \pi_g^{j_g}}{j_1! \cdots j_g !} \Big|
 = |\pi_g|= \rho^{|\kappa|},
\]
which proves (2).
The last statement follows from Corollary \ref{cor:nonvanish}.
\end{proof}

\subsection{$g$-dimensional family of $p$-adic loops}
\label{sect:constructionofloops}
Let $A$ be a $K$-subalgebra of $K((\tt))$.
Assume that 
$A$ satisfies \eqref{eq:assumption-on-a} and
\begin{equation}\label{eq:aisstrictlyinteg}
\text{$A$ is a strictly integral element of $\Gr^{\alg}(K)$~
(see Definition \ref{def:bounded}).}
\end{equation}
Let $A^{\an}$ be the closure of $A$ in $H$,
so that $A^{\an}$ is a $K$-subalgebra of $H$.
By the definition of 
the action of $\Gamma$ on 
$\Gr^{\an}(K)$ and on $\Gr^{\alg, \int}(K)$
(see \eqref{eq:action1}),
we have
\begin{equation}\label{eq:kernel-of-lpgp-action}
   \{ h(T) \in \Gamma ~|~ h(T)A = A \} =
   \{ h(T) \in \Gamma ~|~ h(T)A^{\an} = A^{\an} \} 
  = (A^{\an})^* \cap \Gamma.
\end{equation}
We define an abelian group $\Gamma_A$ by
\[
   \Gamma_A := \Gamma / ( (A^{\an})^* \cap \Gamma) \Gamma_0.
\]
(Here $\Gamma_0$ is the group
\eqref{eq:def-of-gamma-rho} with $\rho=0$.)
We write $[h(T)]$ for the class of $h(T) \in \Gamma$
in $\Gamma_A$.
Then we get a well-defined injective map
\begin{equation}\label{eq:gamma-to-gr-mod-sim}
 \Gamma_A \to \Gr_A^{\alg, \int}(K)/\sim, \qquad
   [h(T)] \mapsto [h(T)A]_A.
\end{equation}

Recall from \S \ref{sect:a-part} that 
the assumption \eqref{eq:assumption-on-a} implies that
$\M(A) \subset \Z_{\geq 0}$ and
there exists an increasing sequence 
$1 \leq \mu_1 < \cdots < \mu_{g_A}$
of positive integers satisfying \eqref{eq:def-of-mu-sequence}.
Set $g :=g_A = 1-i(\M(A))$ and ${\boldsymbol \mu}:=(\mu_1, \ldots, \mu_g)$.
Using \eqref{eq:multi-h},
we define a map 
\begin{equation}\label{eq:formal-loop}
 \sM_K^{\oplus g} \to \Gamma_A,
 \qquad
 \vec{\pi}=(\pi_i)_{i=1}^g \mapsto  [h_{{\boldsymbol \mu}}(T; \vec{\pi})].
\end{equation}
(It should be noted that \eqref{eq:formal-loop} 
is not a group homomorphism,
even if we endow $\sM_K^{\oplus g}$ with an abelian group structure
induced by the formal group in Theorem \ref{thm:fgl} below.)

\begin{proposition}\label{prop:inj}
The map \eqref{eq:formal-loop} is injective.
\end{proposition}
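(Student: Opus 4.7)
The plan is to reduce the injectivity of \eqref{eq:formal-loop} to the manifest injectivity of the map $\sM_K^{\oplus g}\to\Gamma_+,\ \vec\pi\mapsto h_{\boldsymbol\mu}(T;\vec\pi)$, by showing that two distinct elements of $\Gamma_+$ built from the Artin--Hasse formula can never become identified modulo $((A^{\an})^*\cap\Gamma)\Gamma_0$.

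Suppose $[h_{\boldsymbol\mu}(T;\vec\pi)]=[h_{\boldsymbol\mu}(T;\vec{\pi}')]$ in $\Gamma_A$. I will invoke the injection \eqref{eq:gamma-to-gr-mod-sim} together with the definition of homothety equivalence on $\Gr^{\alg}_A$ (see \S\ref{sect:satograss}) to obtain
\[
h_{\boldsymbol\mu}(T;\vec\pi)\,A \;=\; c(T)\cdot h_{\boldsymbol\mu}(T;\vec{\pi}')\,A
\]
as $A$-submodules of $K((\tt))$, for some $c(T)\in K[[\tt]]^*$. Comparing the generators of these two rank-one free $A$-modules produces $a(T)\in A^*$ with $h_{\boldsymbol\mu}(T;\vec\pi)=a(T)\,c(T)\,h_{\boldsymbol\mu}(T;\vec{\pi}')$.

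The key observation is that $A^*=K^*$, which follows directly from \eqref{eq:assumption-on-a}: for any $f\in A^*$, both $f$ and $1/f$ lie in $A\subset K((\tt))$, and their $T$-degrees are negatives of each other. Since both degrees must lie in $\M(A)\subset\Z_{\geq 0}$, each is zero, so $f\in A\cap K[[\tt]]=K$. Therefore the ratio $h_{\boldsymbol\mu}(T;\vec\pi)/h_{\boldsymbol\mu}(T;\vec{\pi}')=a(T)c(T)$ lies in $K[[\tt]]^*$; it also lies in $\Gamma_+$ since $\Gamma_+$ is a subgroup of $\Gamma$. But $\Gamma_+\cap K[[\tt]]^*=\{1\}$, because any element of $\Gamma_+$ has constant term $1$ and no strictly negative $T$-powers, while any element of $K[[\tt]]^*$ has no strictly positive $T$-powers. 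Hence $h_{\boldsymbol\mu}(T;\vec\pi)=h_{\boldsymbol\mu}(T;\vec{\pi}')$ as elements of $\Gamma_+$.

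It remains to observe that $\vec\pi\mapsto h_{\boldsymbol\mu}(T;\vec\pi)$ is itself injective. From $h_{\boldsymbol\mu}(T;\vec\pi)=\exp(\sum_i l(\pi_iT^{\mu_i}))$, the $T^{\mu_j}$-coefficient of $\log h_{\boldsymbol\mu}(T;\vec\pi)$ equals $\pi_j+P_j(\pi_1,\ldots,\pi_{j-1})$ for an explicit polynomial $P_j$ arising from those pairs $(i,k)$ with $i<j$, $k\geq 1$, and $p^k\mu_i=\mu_j$; no $\pi_i$ with $i\geq j$ contributes, since $p^k\mu_i\geq p\mu_j>\mu_j$ whenever $k\geq 1$, and $\mu_i>\mu_j$ whenever $i>j$ and $k=0$. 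An induction on $j$ then recovers $\vec\pi$ uniquely from $h_{\boldsymbol\mu}(T;\vec\pi)$. The only substantive step in this plan is the algebraic identity $A^*=K^*$, which is the mild but essential obstacle, and is immediate from \eqref{eq:assumption-on-a}.
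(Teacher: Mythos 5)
Your reduction breaks down at the step ``comparing the generators of these two rank-one free $A$-modules.'' The set $h(T)A$ appearing in \eqref{eq:gamma-to-gr-mod-sim} is \emph{not} the principal $A$-module generated by $h(T)$: for $h(T)\in\Gamma_+$ the action is defined through the analytic completion, $h(T)V=\{h(T)w(T)\mid w(T)\in V^{\an}\}\cap K((\tt))$ (see \eqref{eq:action1}). Since $h_{\boldsymbol\mu}(T;\vec\pi)$ has infinitely many nonzero coefficients in positive degree, it does not lie in $K((\tt))$, the products $h(T)a(T)$ with $a(T)\in A$ do not lie in $K((\tt))$ either, and the elements of $h(T)A$ are of the form $h(T)w(T)$ for infinite combinations $w(T)\in A^{\an}$ chosen so that the positive tail cancels. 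So no equation $h=a(T)c(T)h'$ with $a(T)\in A^*$ can be extracted, and the observation $A^*=K^*$ (which is correct but easy) does not engage the actual stabilizer $(A^{\an})^*\cap\Gamma$ occurring in the definition of $\Gamma_A$; that group is much larger than $K^*\cap\Gamma$, containing for instance the exponentials $\exp(c\sum_k \frac{\pi^{p^k}}{p^k}a(T))$ of Lemma \ref{lem:evaluation}.

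A quick sanity check shows the conclusion you reach is too strong: you deduce $h_{\boldsymbol\mu}(T;\vec\pi)=h_{\boldsymbol\mu}(T;\vec\pi')$ in $\Gamma_+$ from equality of classes in $\Gamma_A$, i.e.\ that $\Gamma_+\to\Gamma_A$ is injective. This contradicts Proposition \ref{prop:const}, where $h_{\boldsymbol\mu}(T;\vec\pi)^{p^n}\neq 1$ in $\Gamma_+$ yet $[h_{\boldsymbol\mu}(T;\vec\pi)^{p^n}]=1$ in $\Gamma_A$; it would also force every class $[h(T)A]_A$ to be the trivial line bundle, collapsing the whole construction of torsion points. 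The genuine content, which your argument omits, is the paper's Lemma \ref{lem:fil}: because the gap values $\mu_1,\dots,\mu_g$ do not lie in $\M(A)$, any $h(T)\in A^{\an}\cap\Gamma_\rho$ has $|h_{\mu_j}|<\rho^{\mu_j}$, so multiplication by elements of $((A^{\an})^*\cap\Gamma)\Gamma_0$ cannot alter the top graded parts of the coefficients in degrees $\mu_1,\dots,\mu_g$; injectivity is then read off on the associated graded pieces of the filtration $\Gamma_{\rho-}\subset\Gamma_\rho$. Your final paragraph (injectivity of $\vec\pi\mapsto h_{\boldsymbol\mu}(T;\vec\pi)$ into $\Gamma_+$ via the logarithm) is fine but does not address the quotient by $((A^{\an})^*\cap\Gamma)\Gamma_0$, which is where the difficulty lives.
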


For the proof, we need an auxiliary lemma.
Let $\pi \in \sM_K$.
There is a homomorphism (see \eqref{eq:def-of-gamma-rho})
\begin{equation}\label{eq:gr-map0}
\Gamma_{|\pi|} \to \F[[T]]^{*}, \qquad
\sum_{i=-\infty}^{\infty} h_i T^i
\mapsto  
\sum_{i=0}^{\infty} (\frac{h_i}{\pi^i} \bmod \sM_K) T^i.
\end{equation}
The kernel of \eqref{eq:gr-map0} is
denoted by $\Gamma_{|\pi|-}$.
Hence we get an injective homomorphism
\begin{equation}\label{eq:gr-map}
\Gamma_{|\pi|}/\Gamma_{|\pi|-} \to \F[[T]]^{*}.
\end{equation}

\begin{lemma}\label{lem:fil}
Let $\pi \in \sM_K$ and set $\rho = |\pi|$.
Take $h(T)=\sum_{j \in \Z} h_j T^j \in A^{\an} \cap \Gamma_{\rho}$.
Let $\sum_{i=0}^{\infty} b_i T^i \in \F[[T]]^*$ 
be the image of $h(T)\Gamma_{\rho-}$ by \eqref{eq:gr-map}.
Then, 
we have $b_{\mu_j} = 0$ for all $j=1, \ldots, g$.
\end{lemma}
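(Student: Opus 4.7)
The strategy is to construct a continuous $K$-linear functional $\psi_j : H \to K$ that vanishes on $A$ (hence on $A^{\an}$ by continuity) and, restricted to polynomials in $T$, extracts the coefficient of $T^{\mu_j}$ in the direct-sum decomposition \eqref{eq:decomp0}. The desired conclusion $b_{\mu_j}=0$ will then follow from a direct estimate using the $\Gamma_\rho$-bounds on $h$.

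Let $\{v_i(T)\}_{i=1}^{\infty}$ denote the standard basis of $A$, so that $\deg v_i = s_i$ and $\M(A) = \{s_i\}_{i=1}^{\infty}$. Because $A$ is strictly integral, every coefficient $v_{i,k}$ lies in $O_K$. I would define
\[
 \psi_j(h) \;:=\; h_{\mu_j} \,-\, \sum_{i \,:\, s_i > \mu_j} h_{s_i}\, v_{i, \mu_j}
 \qquad \text{for } h = \sum_{k} h_k T^k \in H.
\]
Since $h \in H$ forces $|h_k| \to 0$ as $k \to +\infty$ and $|v_{i,\mu_j}| \leq 1$, the series converges non-archimedeanly, and the estimate $|\psi_j(h)| \leq \|h\|$ shows that $\psi_j$ is a bounded, hence continuous, $K$-linear functional on $H$.

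The next step is to verify $\psi_j|_A = 0$. For a finite linear combination $a = \sum_\ell c_\ell v_\ell \in A$, the triangularity built into the standard basis (namely $v_{\ell, s_i} = \delta_{\ell i}$ for $\ell \leq i$, together with $v_{\ell, k} = 0$ for $k > s_\ell$) yields $a_{s_i} = c_i$ for each $i$, and substituting gives
\[
 \psi_j(a) \;=\; \sum_\ell c_\ell v_{\ell, \mu_j} \,-\, \sum_{\ell \,:\, s_\ell > \mu_j} c_\ell v_{\ell, \mu_j} \;=\; \sum_{\ell \,:\, s_\ell \leq \mu_j} c_\ell v_{\ell, \mu_j}.
\]
Since $\mu_j \notin \M(A)$, no index $\ell$ satisfies $s_\ell = \mu_j$; and when $s_\ell < \mu_j$ we have $v_{\ell, \mu_j} = 0$ because $\deg v_\ell = s_\ell < \mu_j$. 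Hence $\psi_j(a) = 0$, and continuity extends this vanishing to $A^{\an}$. Consequently, every $h \in A^{\an}$ satisfies $h_{\mu_j} = \sum_{i : s_i > \mu_j} h_{s_i}\, v_{i, \mu_j}$.

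Finally, if $h \in A^{\an} \cap \Gamma_\rho$ with $\rho = |\pi| \in (0,1)$, then $|h_k| \leq \rho^k$ for all $k > 0$, so each term in the previous sum satisfies $|h_{s_i} v_{i, \mu_j}| \leq \rho^{s_i}$. Setting $s_* := \min\{s_i : s_i > \mu_j\}$, we obtain $|h_{\mu_j}| \leq \rho^{s_*} < \rho^{\mu_j}$, whence $h_{\mu_j}/\pi^{\mu_j} \in \sM_K$ and $b_{\mu_j} = 0$. The main obstacle is the construction of $\psi_j$ as a bounded functional on all of $H$: strict integrality of $A$ is precisely what ensures $|v_{i,\mu_j}| \leq 1$ and hence the non-archimedean convergence of the defining series for every element of $H$. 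Once $\psi_j$ is in hand, the vanishing on $A$ is a routine combinatorial check and the closing estimate is immediate.
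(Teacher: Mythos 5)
Your proof is correct and is essentially the paper's argument in dual form: the paper expands $h(T)=\sum_i c_i a_i(T)$ in the standard basis of $A^{\an}$ with $c_i=h_{s_i}$ and reads off $h_{\mu_j}=\sum_{s_i>\mu_j}h_{s_i}a_{i,\mu_j}$, which is exactly the identity your continuous functional $\psi_j$ encodes. The combinatorial input (triangularity of the standard basis, $\mu_j\notin\M(A)$, strict integrality) and the closing estimate $|h_{\mu_j}|\leq\rho^{s_*}<\rho^{\mu_j}$ are identical.
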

\begin{proof}
We take the standard basis $\{ a_i(T) = \sum_j a_{ij}T^j \}_i$ 
of $A$  (see \S \ref{sect:nongapsetc}).
Note that $|a_{ij}| \leq 1$ for all $i, j$
by \eqref{eq:aisstrictlyinteg}.
Since $h(T) \in A^{\an}$, we can write $h(T) = \sum_{i=1}^{\infty} c_i a_i(T)$
with $c_i \in K$.
By the definition of standard basis,
for all $i \geq 1$
we have $c_i = h_{s_i}$ 
and hence $|c_i| = |h_{s_i}| \leq \rho^{s_i}$.
Take $j \in \{ 1, \ldots, g \}$. 
Let $l$ be the smallest integer such that
$\mu_j<s_{l}$.
Then we have
$|h_{\mu_j}| = |\sum_{i=s_l}^{\infty} c_i a_{i \mu_j}| 
 \leq \rho^{s_{l}} < \rho^{\mu_j}$.
This proves the lemma.
\end{proof}

\begin{proof}[Proof of Proposition \ref{prop:inj}]
We take a uniformizer $\pi_K \in \sM_K$.
For $q \in \R_{>0}$,
we define $\bar{\sM}_K^q := \sM_K^q/\sM_K^{q+1}$
(equipped with an abelian group structure
induced by the addition of $\sM_K$)
if $q \in \Z_{>0}$, 
and $\bar{\sM}_K^q := 0$ otherwise.
By passing to the quotient,
\eqref{eq:formal-loop} induces an injective {\it homomorphism}
\[ \bigoplus_{j=1}^g \bar{\sM}_K^{i/\mu_j}
  \to
  \Gamma_{|\pi_K|^i}/\Gamma_{|\pi_K|^i -}
\]
for any $i \in \R_{>0}$.
It suffices to show the the injectivity of the homomorphism 
\begin{equation}\label{eq:grofxi}
\bigoplus_{j=1}^g \bar{\sM}_K^{i/\mu_j}
\to \Gamma_{|\pi_K|^i}/
\Gamma_{|\pi_K|^i -} ((A^{\an})^* \cap \Gamma_{|\pi_K|^i})
\end{equation}
induced by \eqref{eq:formal-loop}
for all $i$.
Fix $i$ and take
$\vec{\pi}=(\pi_j) \in \sM^{\oplus g}_K$.
Suppose $|\pi_j| \leq |\pi_K|^{i/\mu_j}$ for all $j$
and the class of $\vec{\pi}$ belongs to the kernel of \eqref{eq:grofxi}.
We need to show the class of $\pi_j$ in $\bar{\sM}_K^{i/\mu_j}$
is trivial for all $j$.
This amounts to showing 
$b_{\mu_j}=0$ for all $j=1, \ldots, g$,
where $\sum_{k=0}^{\infty} b_k T^k \in \F[[T]]^*$
is the image 
of $h(T)=h_{{\boldsymbol \mu}}(T; \vec{\pi})$ by \eqref{eq:gr-map} 
(with $\rho:=|\pi_K|^i$).
However, this holds for any
$h(T) \in \Gamma_{|\pi_K|^i -} 
((A^{\an})^* \cap \Gamma_{|\pi_K|^i})$
by Lemma \ref{lem:fil}.
This completes the proof of Proposition \ref{prop:inj}.
\end{proof}

\subsection{$p^n$-torsion points}\label{sect:pn-tor-pt}
We keep the assumption that
$A \subset K((\tt))$ is a $K$-subalgebra
satisfying \eqref{eq:assumption-on-a} 
and \eqref{eq:aisstrictlyinteg}.
By the assumption \eqref{eq:aisstrictlyinteg},
the decomposition \eqref{eq:decomp0}
restricts to a decomposition of $O_K$-modules
\begin{equation}\label{eq:decomp}
 O_K[[\tt]][T] = O(A) \oplus \tt O_K[[\tt]] \oplus 
   (\bigoplus_{i=1}^g O_K T^{\mu_i}).
\end{equation}
(Recall that ${\boldsymbol \mu}=(\mu_1, \ldots, \mu_g)$
is a sequence of positive integers 
satisfying \eqref{eq:def-of-mu-sequence}.)

By \eqref{eq:decomp},
for all $k \in \Z_{\geq 0}$
there exist (unique)
\begin{equation}\label{eq:decofpower}
\begin{split}
&\vec{b}^{(k)}(T) =
(b_j^{(k)}(T))_{j=1}^g
\in (O(A) \oplus \tt O_K[[\tt]])^{\oplus g}
~\text{and}~ 
\\
&
\mathbf{e}^{(k)} = (e_{i, j}^{(k)})_{i, j=1}^g \in M_g(O_K)
 ~ \text{such that}
\\
&T^{\mu_j p^k} = 
  b_j^{(k)}(T) + \sum_{i =1}^g e_{i, j}^{(k)}T^{\mu_i}
\qquad \text{for all} ~j \in \{1, \cdots, g \}.
\end{split}
\end{equation}
(This is to say,
with the notation in \eqref{eq:decofpower-00-0},
$\vec{b}^{(k)}(T)=\vec{b}^{[p^k]}(T)$
and
$\mathbf{e}^{(k)} = \mathbf{e}^{[p^k]}$.)
Using the matrix $\mathbf{e}^{(k)}$,
we introduce a vector of formal power series:
\begin{equation}\label{eq:logarithm}
 \vec{l}(X_1, \cdots, X_g) 
:= \sum_{k=0}^{\infty} \frac{1}{p^k} \mathbf{e}^{(k)}
\begin{pmatrix}
X_1^{p^k} \\ \vdots \\ X_g^{p^k}
\end{pmatrix}
\in O_K[[X_1, \cdots, X_g]]^{\oplus g}.
\end{equation}
Note that $\vec{l}(\vec{\pi})$ converges in $K^{\oplus g}$
for any $\vec{\pi} \in \sM_{K}^{\oplus g}$.
For $\vec{\pi} = (\pi_1, \cdots, \pi_g) \in \sM_K^{\oplus g}$,
we write $|| \vec{\pi} || := \max( |\pi_1|, \cdots, |\pi_g|)$.
We define 
for each $n \in \Z_{\geq 0}$ 
\begin{equation}
\label{eq:def-of-t_n}
\begin{split}
T_n &:= \{ \vec{\pi} \in \sM_{K}^{\oplus g} ~|~
\vec{l}(\vec{\pi})=0, ~
|| \vec{\pi} || \leq |p|^{1/(p^n - p^{n-1})} \}.
\end{split}
\end{equation}

\begin{proposition}\label{prop:const}
Let $n \in \Z_{>0}$. 
For any $\vec{\pi} \in T_n$,
we have $[h_{{\boldsymbol \mu}}(T; \vec{\pi})^{p^n}]=1$ in $\Gamma_A$.
(See \eqref{eq:multi-h} for the definition of 
$h_{{\boldsymbol \mu}}(T; \vec{\pi})$.)
Consequently, we get an injective map
\begin{equation}\label{eq:map-tn-to-gamma-a}
 T_n \to \Gamma_A[p^n], \qquad
  \vec{\pi} \mapsto [h_{{\boldsymbol \mu}}(T; \vec{\pi})].
\end{equation}
\end{proposition}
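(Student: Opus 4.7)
The plan is to establish the first assertion, that $[h_{{\boldsymbol \mu}}(T; \vec{\pi})^{p^n}]=1$ in $\Gamma_A$ for $\vec{\pi} \in T_n$; once this is done, the injectivity of \eqref{eq:map-tn-to-gamma-a} follows immediately by restricting the injective map \eqref{eq:formal-loop} of Proposition~\ref{prop:inj} to $T_n \subset \sM_K^{\oplus g}$. To prove the assertion, I would exhibit a factorization of $h_{{\boldsymbol \mu}}(T; \vec{\pi})^{p^n}$ in $\Gamma$ as the product of an element of $(A^{\an})^* \cap \Gamma$ and an element of $\Gamma_0$.

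Writing $h_{{\boldsymbol \mu}}(T; \vec{\pi})^{p^n} = \exp(L)$ with
\[
L := p^n \sum_{j=1}^g l(\pi_j T^{\mu_j})
 = \sum_{j,k \geq 0} \frac{p^n \pi_j^{p^k}}{p^k}\, T^{\mu_j p^k},
\]
the first step is to substitute the decomposition \eqref{eq:decofpower} of each monomial $T^{\mu_j p^k}$ and regroup. The resulting coefficient of $T^{\mu_i}$ equals $p^n \cdot(\vec{l}(\vec{\pi}))_i$, which vanishes by the definition of $T_n$. Splitting $b_j^{(k)}(T) = a_j^{(k)}(T) + g_j^{(k)}(T)$ via \eqref{eq:decomp} with $a_j^{(k)} \in O(A)$ and $g_j^{(k)} \in \tt O_K[[\tt]]$, one obtains $L = L_+ + L_-$ where
\[
L_+ := \sum_{j,k} \frac{p^n \pi_j^{p^k}}{p^k}\, a_j^{(k)}(T)
\qquad \text{and} \qquad
L_- := \sum_{j,k} \frac{p^n \pi_j^{p^k}}{p^k}\, g_j^{(k)}(T).
\]

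The key analytic input is the valuation bound
\[
v_p\!\left(\frac{p^n \pi_j^{p^k}}{p^k}\right)
  \geq (n-k) + \frac{p^k}{p^n - p^{n-1}},
\]
whose minimum over $k \geq 0$ is attained at $k \in \{n-1, n\}$ and equals $p/(p-1)$. Since $p/(p-1) > 1/(p-1)$, these coefficients have $p$-adic norm strictly less than $|p|^{1/(p-1)}$, the radius of convergence of $\exp$, so the series $\exp(L)$, $\exp(L_+)$, and $\exp(L_-)$ all converge in the Banach algebra $H$. Because $A^{\an}$ is a closed subalgebra of $H$ containing $L_+$, we have $\exp(L_+) \in (A^{\an})^*$; because $L_- \in \tt O_K[[\tt]]$ with $\|L_-\| < 1$, we have $\exp(L_-) \in 1 + \sM_K[[\tt]] \subset \Gamma_0$. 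By commutativity $\exp(L) = \exp(L_+)\exp(L_-)$; since $\exp(L) \in \Gamma_+$ and $\exp(L_-) \in \Gamma_0 \subset \Gamma$, we deduce $\exp(L_+) = \exp(L)\exp(L_-)^{-1} \in \Gamma$, hence $\exp(L_+) \in (A^{\an})^* \cap \Gamma$. This produces the desired factorization $h_{{\boldsymbol \mu}}(T; \vec{\pi})^{p^n} = \exp(L_+) \cdot \exp(L_-) \in ((A^{\an})^* \cap \Gamma) \cdot \Gamma_0$, so $[h_{{\boldsymbol \mu}}(T; \vec{\pi})^{p^n}] = 1$ in $\Gamma_A$.

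The main obstacle, I expect, will be the convergence bookkeeping: one must verify that the series defining $L_+$ and $L_-$ converge \emph{individually} in $H$, not merely that their sum $L$ does, since it is the individual convergence that makes the factorization $\exp(L_+ + L_-) = \exp(L_+)\exp(L_-)$ meaningful as a statement about pieces respectively in $A^{\an}$ and in $\Gamma_0$. The specific radius $|p|^{1/(p^n - p^{n-1})}$ imposed in the definition of $T_n$ is calibrated precisely to yield the valuation bound $p/(p-1)$, which exceeds $1/(p-1)$ with a strict margin---exactly the room needed for the exponential series to converge and for the factorization argument to close.
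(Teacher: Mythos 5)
Your proposal is correct and follows essentially the same route as the paper: substitute the decomposition \eqref{eq:decofpower} into the exponent, kill the $T^{\mu_i}$-terms using $\vec{l}(\vec{\pi})=0$, split the remainder into its $O(A)$- and $\tt O_K[[\tt]]$-parts, and check that the coefficients lie strictly within the radius of convergence of $\exp$ so that the two factors land in $(A^{\an})^*\cap\Gamma$ and $\Gamma_0$ respectively. The only cosmetic difference is that the paper isolates the convergence estimate as Lemma \ref{lem:evaluation} (stated for a general $c$ with $|c|<|p^{n-1}|$, so it can be reused in Proposition \ref{prop:const-cyc}), whereas you carry out the same estimate directly for $c=p^n$.
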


For the proof, we need a lemma,
which will be used in the proof of 
Proposition \ref{prop:const-cyc} again.

\begin{lemma}\label{lem:evaluation}
Let $b(T) \in O(A) \oplus \tt O_K[[\tt]]$ and $n \in \Z_{>0}$.
If two elements $c$ and $\pi$ of $\sM_K$ satisfy
$|c| < |p^{n-1}|$ and $|\pi| \leq |p|^{1/(p^n-p^{n-1})}$, 
then we have
\[
[\exp(c \sum_{k=0}^{\infty} \frac{\pi^{p^k}}{p^{k}} b(T) )] = 1 
\qquad \text{in}~ \Gamma_A.
\]
\end{lemma}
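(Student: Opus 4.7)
Set $\alpha := c\sum_{k=0}^{\infty}\pi^{p^k}/p^k \in K$. My plan has two main parts: first show $v(\alpha) > 1/(p-1)$ (where $v$ is the normalized valuation with $v(p)=1$), and then use this to factor $\exp(\alpha b(T))$ through the two subgroups $(A^{\an})^* \cap \Gamma$ and $\Gamma_0$ whose quotient defines $\Gamma_A$. The hypothesis on $b(T)$ furnishes a decomposition $b(T) = a(T) + g(T)$ with $a(T) \in O(A)$ and $g(T) \in \tt O_K[[\tt]]$, and once $v(\alpha) > 1/(p-1)$ is established the commuting factorization
\[ \exp(\alpha b(T)) = \exp(\alpha a(T)) \cdot \exp(\alpha g(T)) \]
makes sense in the Banach algebra $H$. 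The goal is to exhibit the first factor in $(A^{\an})^* \cap \Gamma$ and the second in $\Gamma_0$, which by definition makes $[\exp(\alpha b(T))] = 1$ in $\Gamma_A$.

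The valuation bound is the heart of the matter. The $k$-th summand of $\sum_k \pi^{p^k}/p^k$ has valuation $p^k v(\pi) - k$, and the discrete increment $p^k(p-1)v(\pi) - 1$ is non-negative exactly when $v(\pi) \geq 1/(p^k(p-1))$. The hypothesis $v(\pi) \geq 1/(p^{n-1}(p-1))$ makes this hold precisely for $k \geq n-1$, so the minimum valuation is attained simultaneously at $k=n-1$ and $k=n$, giving
\[ v\bigl(\textstyle\sum_{k}\pi^{p^k}/p^k\bigr) \geq 1/(p-1) - (n-1). \]
Combined with the assumption $v(c) > n-1$, this yields the crucial bound $v(\alpha) > 1/(p-1)$, i.e.\ the classical threshold for convergence of the $p$-adic exponential on the closed unit ball.

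For the negative factor: since $g \in \tt O_K[[\tt]]$ has integral coefficients and $v(\alpha) > 1/(p-1) > 0$, each coefficient of $T^i$ ($i<0$) in $\exp(\alpha g(T))$ receives contributions from only finitely many $n$, all of which lie in $O_K$, so $\exp(\alpha g(T)) \in 1 + \tt O_K[[\tt]] \subset \Gamma_0$. For the $A$-factor: $a \in A$ with $\|a\| \leq 1$ forces $\exp(\alpha a(T))$ to converge in the closure $A^{\an} \subset H$, with inverse $\exp(-\alpha a(T))$, so it lies in $(A^{\an})^*$. To place it in $\Gamma$, I would write $a(T) = \sum_{i \leq N} a_i T^i$ and observe that the coefficient $h_m$ of $T^m$ for $m > 0$ in $\exp(\alpha a)$ receives only contributions from $\alpha^n a(T)^n/n!$ with $n \geq m/N$; Legendre's estimate for $v(n!)$ then yields $v(h_m) > m\bigl(v(\alpha) - 1/(p-1)\bigr)/N$, giving a geometric bound $|h_m| \leq \rho^m$ with $\rho = |p|^{(v(\alpha) - 1/(p-1))/N} < 1$, so $\exp(\alpha a(T)) \in \Gamma_\rho \subset \Gamma$.

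I expect the main obstacle to be the sharp Newton-polygon computation in the second paragraph: the threshold $|\pi| \leq |p|^{1/(p^n - p^{n-1})}$ in the hypothesis is exactly calibrated so that the minimum of $p^k v(\pi) - k$ is attained at two consecutive values $k = n-1, n$, making the bound tight at the boundary and requiring careful case-work. Once this estimate is secured, the remaining verifications are routine consequences of convergence of $\exp$ on the open unit disk together with the definitions of $\Gamma$, $\Gamma_0$ and $A^{\an}$.
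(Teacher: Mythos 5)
Your proof is correct and follows essentially the same route as the paper: the same Newton-polygon computation showing that the minimum of $p^k/(p^n-p^{n-1})-k$ over $k\ge 0$ equals $1/(p-1)$ (attained at $k=n-1,n$), hence $|c\sum_k\pi^{p^k}/p^k|<|p|^{1/(p-1)}$, followed by the decomposition $b=a+g$ with $\exp(\alpha a)\in (A^{\an})^*\cap\Gamma$ and $\exp(\alpha g)\in\Gamma_0$. You merely spell out the membership verifications (the geometric decay of the positive-degree coefficients and the integrality of the negative-degree ones) that the paper leaves implicit behind the phrase about the radius of convergence of $\exp$.
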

\begin{proof}
For all $k \in \Z_{\geq 0}$, we have
\begin{equation}\label{eq:esti}
 |c\cdot \frac{\pi^{p^k}}{p^k}| 
< |p|^{n-1-k+\frac{p^k}{p^{n}-p^{n-1}}} \leq |p|^\frac{1}{p-1}.
\end{equation}
(The latter equality holds if and only if $k=n, n-1$.)
Since the radius of convergence
of $\exp(T)$ is $|p|^{1/(p-1)}$,
it follows that 
$\exp(c \sum \frac{\pi^{p^k}}{p^k} a(T))
\in (A^{\an})^* \cap \Gamma$ for all $a(T) \in O(A^{\an})$,
and
$\exp(c \sum \frac{\pi^{p^k}}{p^k} g(T))
\in \Gamma_0$ for all $g(T) \in \tt O_K[[\tt]]$.
We are done.
\end{proof}

\begin{proof}[Proof of Proposition \ref{prop:const}]
We calculate
\begin{align*}
h_{{\boldsymbol \mu}}(T; \vec{\pi})^{p^n}  
&= \exp( \sum_{j=1}^g 
  \sum_{k=0}^{\infty} \frac{\pi_j^{p^k}}{p^k} T^{\mu_j p^k})^{p^n} 
\\
&= \exp( p^n \sum_{j=1}^g 
  \sum_{k=0}^{\infty} \frac{\pi_j^{p^k}}{p^k} 
  (\sum_{i=1}^g e_{ij}^{(k)}T^{\mu_i} + b_j^{(k)}(T)) )
\\
&\overset{(*)}{=} \exp( p^n \sum_{j=1}^g 
  \sum_{k=0}^{\infty} \frac{\pi_j^{p^k}}{p^k} 
  b_j^{(k)}(T)),
\end{align*}
where we used $\vec{l}(\vec{\pi})=\vec{0}$ at $(*)$.
Now Lemma \ref{lem:evaluation} proves
$[h_{{\boldsymbol \mu}}(T; \vec{\pi})^{p^n}]=1$ in $\Gamma_A$.
The injectivity of \eqref{eq:map-tn-to-gamma-a}
follows from Proposition \ref{prop:inj}.
\end{proof}

The following proposition will be used 
in the proof of Theorem \ref{thm:main2}:

\begin{proposition}\label{prop:existance-of-p-tor-el}
Suppose that $\det(\mathbf{e}^{(1)}) \in O_K^*$.
Then
there exists a finite extension $K'$ of $K$
such that,
upon replacing $K$ by $K'$,
we have $|T_1| = p^g$ and the set
\begin{equation}\label{eq:p-torsion-elements-in-t1}
\{ \vec{\pi}=(\pi_1, \ldots, \pi_g) \in T_1 ~|~ 
|\pi_g|=|p|^{1/(p-1)} \}
\end{equation}
contains at least $p^{g}-p^{g-1}$ elements.
\end{proposition}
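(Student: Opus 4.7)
The plan is to substitute $\pi_i = \xi u_i$ with $\xi := p^{1/(p-1)}$ (after enlarging $K$ to a finite extension $K'$ containing $\xi$), reduce the rescaled system modulo $\sM_{K'}$ to a system of additive polynomials over $\bar\F$, count its $\bar\F$-solutions via a Bezout-type argument, and then Hensel-lift the solutions back to $T_1$.

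First, I note that the decomposition \eqref{eq:decomp} forces $\mathbf{e}^{(0)} = I$ (since $T^{\mu_j}$ already lies in the summand $\bigoplus_i O_K T^{\mu_i}$). Using $\xi^{p^k}/p^k = p^{p^k/(p-1)-k}$, a direct computation yields
\[
\xi^{-1}\,\vec{l}(\xi \vec{u}) \;=\; \vec{u} \;+\; \mathbf{e}^{(1)}\vec{u}^{(p)} \;+\; \sum_{k\ge 2} p^{\beta_k}\,\mathbf{e}^{(k)}\,\vec{u}^{(p^k)},
\]
where $\beta_k := (p^k - 1)/(p-1) - k$ satisfies $\beta_k \ge p-1 \ge 1$ for every $k \ge 2$. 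Thus for any $\vec u \in O_{K'}^{\oplus g}$ the tail converges and takes values in $\sM_{K'}^{\oplus g}$. Reducing modulo $\sM_{K'}$ yields the system
\[
\bar{F}_i(\bar{\vec{u}}) \;:=\; \bar{u}_i + \sum_j \bar{e}_{ij}^{(1)}\,\bar{u}_j^{\,p} \;=\; 0 \qquad (i=1,\dots,g).
\]

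Next I analyze $V := \{\bar{F}=\vec 0\} \subset \bar\F^g$. The morphism $\bar F : \mathbb{A}^g \to \mathbb{A}^g$ is a homomorphism of $\mathbb{G}_a^g$ (because $\vec u \mapsto \vec u^{(p)}$ is additive in characteristic $p$) and its Jacobian equals the identity matrix at every point, so $\bar F$ is étale. After homogenizing $\bar F_i$ to degree $p$, any projective solution at infinity would satisfy $\bar{\mathbf{e}}^{(1)}\,\vec u^{(p)} = 0$; the invertibility of $\bar{\mathbf{e}}^{(1)}$ forces $\vec u = 0$, so there are no points at infinity. Bezout's theorem therefore gives $|V|=p^g$ with every multiplicity equal to $1$. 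Enlarging $K'$ further so that its residue field $\F'$ contains the coordinates of every $\bar{\vec u}\in V$, the multivariate Hensel lemma applies (the Jacobian of $\xi^{-1}\vec l(\xi\,\cdot\,)$ reduces to $I$, hence is invertible over $O_{K'}$) and lifts each $\bar{\vec u}\in V$ uniquely to $\vec u\in O_{K'}^{\oplus g}$ with $\vec l(\xi\vec u)=0$. Setting $\vec\pi := \xi\vec u$ produces an element of $T_1$; conversely any $\vec\pi\in T_1$ has $\vec u := \vec\pi/\xi\in O_{K'}^{\oplus g}$ reducing into $V$, giving a bijection $T_1\leftrightarrow V$. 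Thus $|T_1|=p^g$.

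For the second assertion, the condition $|\pi_g|=|p|^{1/(p-1)}$ is equivalent to $\bar u_g \ne 0$. If the $\F_p$-linear projection $V\to\bar\F$ onto the $g$-th coordinate vanished, then every $\bar{\vec u}\in V$ would satisfy $u_g = 0$ and hence solve the subsystem $\bar F_i(\vec u',0)=0$ ($i<g$) of $g-1$ degree-$p$ polynomials in the $g-1$ variables $\vec u' = (u_1,\dots,u_{g-1})$; as its Jacobian is $I_{g-1}$, all affine solutions are simple, and Bezout bounds their number by $p^{g-1}$, contradicting $|V|=p^g$. Hence the projection is non-zero, its kernel has order $p^{g-1}$, and the remaining $p^g - p^{g-1}$ elements of $V$ Hensel-lift to the desired elements of $T_1$ with $|\pi_g| = |p|^{1/(p-1)}$. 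The most delicate point is ensuring Bezout applies cleanly for $|V|=p^g$, which requires using the invertibility of $\bar{\mathbf{e}}^{(1)}$ to rule out points at infinity; the rest is a routine Hensel-lifting argument together with the $\beta_k\ge 1$ estimate.
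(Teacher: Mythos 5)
Your proof is correct, and its skeleton coincides with the paper's: substitute $\vec{\pi}=\varpi\vec{u}$ with $\varpi^{p-1}=p$ over a finite extension, observe that $\varpi^{-1}\vec{l}(\varpi\vec{u})$ reduces modulo the maximal ideal to the system $\vec{u}+\bar{\mathbf{e}}^{(1)}\vec{u}^{(p)}=\vec{0}$ (your estimate $\beta_k\geq p-1$ for $k\geq 2$ is exactly what makes the tail vanish in the reduction), count the solutions over $\bar{\F}$, and lift them by the multivariate Hensel lemma using that the Jacobian reduces to the identity. The one substantive divergence is the counting step. The paper cites the classical theorem of Hasse--Witt \cite[\S 3, Satz 10]{HW}, which gives exactly $p^{\rho}$ solutions of $\vec{u}+L\vec{u}^{(p)}=\vec{0}$ with $\rho=\rank\bigl(LL^{(p)}\cdots L^{(p^{g-1})}\bigr)$; it applies this once to $L=\bar{\mathbf{e}}^{(1)}$ to get $|T_1|=p^g$, and once to the $(g-1)\times(g-1)$ subsystem obtained by setting $u_g=0$ to bound that locus by $p^{g-1}$. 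You instead rederive both counts from scratch: \'etaleness of the additive morphism plus absence of points at infinity (which uses the invertibility of $\bar{\mathbf{e}}^{(1)}$) plus B\'ezout. This buys a self-contained argument at the cost of a little extra care; the paper's route is shorter and also yields the exact exponent $\rho$ when $\bar{\mathbf{e}}^{(1)}$ is not invertible. The only place in your version that deserves an explicit word is the B\'ezout bound for the subsystem $\bar{F}_i(\vec{u}',0)=0$, $i<g$: its matrix $(\bar{e}^{(1)}_{ij})_{i,j<g}$ need not be invertible, so points at infinity cannot be excluded the same way; one should say that the affine solution set is finite (it is the fibre over $0$ of an \'etale morphism, hence a discrete closed subset of a Noetherian space) and then invoke the B\'ezout inequality for the isolated points, which is what bounds their number by $p^{g-1}$. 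With that sentence added, the argument is complete.
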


\begin{proof}
We shall use the following classical result
\cite[\S 3, Satz 10]{HW}:
Let $\bar{\F}$ be an algebraic closure of $\F$.
Let $L \in M_g(\F)$,
and let $\rho$ be the rank of the matrix
$L L^{(p)} \cdots L^{(p^{g-1})}$,
where $L^{(p^i)}$ denotes the matrix obtained 
from $L$ by raising all the entries to its $p^i$-th power.
Then we have
\begin{equation}\label{eq:number-of-sol-hasse-witt}
\Big|~
\Big\{ 
\begin{pmatrix}
u_1 \\ \vdots \\ u_g
\end{pmatrix}
\in \bar{\F}^{\oplus g}
~\Big|
~
\begin{pmatrix}
u_1 \\ \vdots \\ u_g
\end{pmatrix}+
L
\begin{pmatrix}
u_1^{p} \\ \vdots \\ u_g^{p}
\end{pmatrix}  = \vec{0} 
\Big\}
~\Big|=p^{\rho}.
\end{equation}
Consequently, for any $L \in GL_g(\F)$ we get
\begin{equation}\label{eq:number-of-sol-in-t1}
\Big| ~
\Big\{ 
\begin{pmatrix}
u_1 \\ \vdots \\ u_g
\end{pmatrix}
\in \bar{\F}^{\oplus g}
~\Big|
~
\begin{pmatrix}
u_1 \\ \vdots \\ u_g
\end{pmatrix}+
L
\begin{pmatrix}
u_1^{p} \\ \vdots \\ u_g^{p}
\end{pmatrix}  = \vec{0},
~ u_g \not= 0
\Big\}
~
\Big|
\geq p^g-p^{g-1}.
\end{equation}

To prove the proposition,
we may suppose that there exists
$\varpi \in K$ such that $\varpi^{p-1}=p$. 
Then we have 
$$ \frac{1}{\varpi} \vec{l}(\varpi  X_1, \cdots, \varpi X_g) 
\equiv  
\begin{pmatrix}
X_1 \\ \vdots \\ X_g
\end{pmatrix}+
\mathbf{e}^{(1)}
\begin{pmatrix}
X_1^{p} \\ \vdots \\ X_g^{p}
\end{pmatrix} \mod \varpi.$$
By 
\eqref{eq:number-of-sol-hasse-witt} and
Hensel's lemma,
after replacing $K$ by its finite unramified extension,
we get $p^g$ elements 
$(u_1, \dots, u_g) \in O_K^{\oplus g}$ 
such that $\vec{l}(\varpi u_1, \cdots, \varpi u_g)=0$,
each of which gives rise to an element 
$(\pi_1, \dots, \pi_g)=(\varpi u_1, \dots, \varpi u_g)$ 
of $T_1$.
Moreover, at least $p^g-p^{g-1}$ elements among them
satisfy $|u_g|=1$ by \eqref{eq:number-of-sol-in-t1},
and each of them gives rise to an element 
$(\pi_1, \dots, \pi_g)=(\varpi u_1, \dots, \varpi u_g)$ 
of the set \eqref{eq:p-torsion-elements-in-t1}.
We are done.
\end{proof}

\subsection{Cyclic group action}
Let $d$ be an integer such that $p \equiv 1 \mod d$
and let $\zeta_d \in \Z_p \subset K$ 
be a primitive $d$-th root of unity.
We define a $K$-algebra automorphism
$\delta : K((\tt)) \to K((\tt))$
by $\delta(\sum_n a_n T^n) = \sum_n a_n (\zeta_d T)^n$.
In this subsection,
$A \subset K((\tt))$ is a $K$-subalgebra
satisfying \eqref{eq:assumption-on-a},
\eqref{eq:aisstrictlyinteg} and
the following condition:
\begin{equation}\label{eq:s-preserved}
\text{$\delta$ restricts to an automorphism of $A$}.
\end{equation}
We also assume
\begin{equation}\label{eq:2g-1leqd}
d \geq \mu_g.
\end{equation}

By \eqref{eq:2g-1leqd},
we have $\mu_i \not\equiv \mu_j \mod d$ for any 
$1 \leq i <j \leq g$.
Since the action of $\delta$ on $K((\tt))$
respects the decomposition \eqref{eq:decomp},
the matrix $\mathbf{e}^{(k)}$ 
introduced in \eqref{eq:decofpower}
must be diagonal.
In particular, we have
\begin{equation}\label{eq:det-of-diag-mat}
 \det(\mathbf{e}^{(k)})
  = e_{11}^{(k)} \cdots e_{gg}^{(k)}.
\end{equation}
We define
\begin{equation}\label{eq:log-fgl-diagonal2}
l_i(X)=\sum_{k=0}^{\infty} 
\frac{e_{ii}^{(k)}}{p^k}X^{p^k} ~(i=1, \ldots, g)
\end{equation}
so that we have
\begin{equation}\label{eq:log-fgl-diagonal}
\vec{l}(X_1, \cdots, X_g) = 
\begin{pmatrix}
l_1(X_1) \\ \vdots \\l_g(X_g))
\end{pmatrix}.
\end{equation}
We also define
for each $i \in \{ 1, \ldots, g \}$
\[
  T_{n, i}=\{ \pi_i \in \sM_K ~|~ l_i(\pi_i)=0,
    ~|\pi_i| \leq |p|^{1/(p^n-p^{n-1})} \}
\]
so that
$T_n = T_{n, 1} \times \cdots \times T_{n, g}$.

\begin{proposition}\label{prop:existance-of-p-tor-el2}
Let $i \in \{ 1, \ldots, g \}$ and $n \in \Z_{>0}$.
Suppose that 
$e_{ii}^{(k)} \in O_K^*$ for all $k \in \Z_{\geq 0}$.
Then
there exists a finite extension $K'$ of $K$
such that,
upon replacing $K$ by $K'$,
\begin{equation}\label{eq:p-power-torsion-elements-in-ti}
|T_{n, i}| = p^{n}.
\end{equation}
Moreover, for any $\pi \in T_{n, i} \setminus \{ 0 \}$
there exists $s \in \{ 1, \ldots, n \}$
such that $|\pi| = |p|^{1/(p^s-p^{s-1})}$.
For any $s \in \{ 1, \ldots, n \}$, 
the cardinality of the set
\begin{equation}\label{eq:set-of-exact-order-pk}
 \{ \pi \in T_{n, i} ~|~ |\pi|=|p|^{1/(p^s-p^{s-1})} \}
\end{equation}
is exactly $p^s - p^{s-1}$.
\end{proposition}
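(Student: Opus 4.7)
The plan is to read everything off from the Newton polygon of $l_i(X)$. First note that $\mathbf{e}^{(0)}$ is the identity matrix: taking $k=0$ in \eqref{eq:decofpower} one has $T^{\mu_j}=b_j^{(0)}(T)+\sum_i e_{ij}^{(0)}T^{\mu_i}$, and since $T^{\mu_j}$ already lies in the third summand of \eqref{eq:decomp}, one must have $b_j^{(0)}=0$ and $e_{ij}^{(0)}=\delta_{ij}$. Combined with the hypothesis $e_{ii}^{(k)}\in O_K^*$ for every $k\geq 1$, this means the coefficient of $X^{p^k}$ in $l_i(X)$ has $p$-adic valuation exactly $-k$ and no other power of $X$ occurs. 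Hence the Newton polygon of $l_i$ is the convex piecewise-linear graph through $(p^k,-k)$ for $k\geq 0$, whose segment from $(p^{k-1},-(k-1))$ to $(p^k,-k)$ has slope $-1/(p^k-p^{k-1})$ and horizontal length $p^k-p^{k-1}$.

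Next I would apply Weierstrass preparation to $l_i$ on a closed disk $|X|\leq r$ with $|p|^{1/(p^n-p^{n-1})}\leq r<|p|^{1/(p^{n+1}-p^n)}$ (on which $l_i$ converges), factoring $l_i=P\cdot U$ where $U$ is a unit in the Tate algebra and $P\in K[X]$ is a polynomial whose roots in $\bar K$ are precisely the zeros of $l_i$ of valuation $\geq 1/(p^n-p^{n-1})$. Reading off the polygon, $P$ has, for each $s\in\{1,\ldots,n\}$, exactly $p^s-p^{s-1}$ nonzero roots of valuation $1/(p^s-p^{s-1})$, plus the simple root at $0$, giving $\deg P=1+\sum_{s=1}^n(p^s-p^{s-1})=p^n$ roots counted with multiplicity. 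In particular any nonzero $\pi\in T_{n,i}$ is forced to have valuation $1/(p^s-p^{s-1})$ for a unique $s\in\{1,\ldots,n\}$, which is the ``moreover'' clause.

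I then need to check that all of these roots are simple, so that multiplicity equals cardinality. Differentiating gives $l_i'(X)=\sum_{k\geq 0}e_{ii}^{(k)}X^{p^k-1}$, whose leading term is $e_{ii}^{(0)}=1$, so $l_i'(0)=1\neq 0$. For a nonzero root $\pi$ with $v(\pi)=1/(p^s-p^{s-1})$, the $k=0$ summand of $l_i'(\pi)$ equals $1$ (valuation $0$), while each $k\geq 1$ summand has strictly positive valuation $(p^k-1)/(p^s-p^{s-1})>0$; hence $v(l_i'(\pi))=0$ and $\pi$ is simple.

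Finally, take $K'$ to be a finite extension of $K$ containing all the roots of $P(X)$. Over $K'$ the set $T_{n,i}$ consists of exactly these $p^n$ distinct zeros of $l_i$, proving \eqref{eq:p-power-torsion-elements-in-ti}, and the description of the set \eqref{eq:set-of-exact-order-pk} falls out of the horizontal lengths of the polygon segments. There is no serious obstacle; the only point that really uses the hypothesis is the identification $e_{ii}^{(0)}=1$, which fixes the initial vertex of the Newton polygon at $(1,0)$ and makes every subsequent slope land at exactly the valuation predicted by the statement.
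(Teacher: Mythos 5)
Your proof is correct and follows exactly the route the paper takes: the paper's own proof simply observes that the derivative $l_i'(X)=\sum_k e_{ii}^{(k)}X^{p^k-1}$ forces all roots to be simple and then cites the Newton polygon (Goss, Proposition 2.9) for the count, which is precisely the computation you carry out in detail. Your identification of $\mathbf{e}^{(0)}$ with the identity matrix, pinning the initial vertex at $(1,0)$, matches the paper's remark to the same effect in the proof of Theorem \ref{thm:fgl}.
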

\begin{proof}
It is seen from the derivation
that $l_i(X)$ has no multiple root.
Then the proposition is deduced by
looking at the Newton polygon
(see, for example, \cite[Proposition 2.9]{Goss}).
\end{proof}

There exists a unique continuous 
$K$-algebra automorphism $H \to H$ which 
coincides with $\delta$ on $H \cap K((\tt))$.
We denote this automorphism by the same letter $\delta$.
It induces an automorphism $\Gamma_A$,
which is also denoted by $\delta$.
For $\mu \in \Z/d\Z$
and a $\Z_p$-module $M$ equipped
with a $\Z_p$-linear automorphism $\delta$ of order $d$,
we define
\begin{equation}\label{eq:def-of-mu-part}
M_{\mu} := \{ \alpha \in M ~|~
 \delta \alpha = \zeta_d^{\mu} \alpha \}.
\end{equation}

\begin{proposition}\label{prop:const-cyc}
Let $\pi \in T_{n, i}$
for some $n \in \Z_{>0}$ and $i \in \{ 1, \ldots, g \}$,
and let $h(T) := h(T^{\mu_i}; \pi)$
(see \eqref{eq:oneparameterloop}).
Then
we have $[h(T)] \in \Gamma_A[p^n]_{\mu_i}$.
Consequently, we get an injective map
\begin{equation}\label{eq:map-tn-to-gamma-a-cyclic}
 T_{n, i} \to \Gamma_A[p^n]_{\mu_i}, \qquad
  \pi \mapsto [h(T^{\mu_i}; \pi)].
\end{equation}
\end{proposition}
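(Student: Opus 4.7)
The plan is to deduce Proposition \ref{prop:const-cyc} from the two preceding main results, Propositions \ref{prop:const} and \ref{prop:inj}, together with a direct calculation of the action of $\delta$ on the one-parameter loop $h(T^{\mu_i};\pi)$. Three things need to be verified: that $[h(T)]$ is $p^n$-torsion in $\Gamma_A$, that it lies in the $\mu_i$-eigenspace for $\delta$ as defined in \eqref{eq:def-of-mu-part}, and that the resulting assignment $\pi\mapsto[h(T)]$ is injective.

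For the torsion, I would reduce to Proposition \ref{prop:const} by setting $\vec{\pi}\in\sM_K^{\oplus g}$ to be $\pi$ in the $i$-th coordinate and $0$ elsewhere. The diagonal shape of $\vec{l}$ recorded in \eqref{eq:log-fgl-diagonal} (itself a consequence of \eqref{eq:s-preserved} combined with \eqref{eq:2g-1leqd}) identifies $l_i(\pi)=0$ with $\vec{l}(\vec{\pi})=\vec 0$, so $\vec{\pi}\in T_n$. Since $h(T^{\mu_j};0)=\exp(0)=1$ for $j\neq i$, the product \eqref{eq:multi-h} collapses to $h_{\boldsymbol{\mu}}(T;\vec{\pi})=h(T^{\mu_i};\pi)=h(T)$, and Proposition \ref{prop:const} delivers $[h(T)^{p^n}]=1$ in $\Gamma_A$.

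The eigenvalue claim then reduces to a short direct computation of $\delta(h(T))$. Expanding $h(T)=\exp\bigl(\sum_{k}\pi^{p^k}T^{\mu_ip^k}/p^k\bigr)$ and applying $\delta\colon T\mapsto\zeta_dT$ produces a factor $\zeta_d^{\mu_ip^k}$ in each summand; the key observation is that the hypothesis $p\equiv 1\pmod d$ forces $\zeta_d^{p^k}=\zeta_d$, so every summand picks up the \emph{same} factor $\zeta_d^{\mu_i}$, and the exponent factors as $\zeta_d^{\mu_i}\cdot l(\pi T^{\mu_i})$. Thus $\delta(h(T))=h(T)^{\zeta_d^{\mu_i}}$ in $\Gamma$, the right-hand side being defined via the formal $\Z_p$-exponentiation $h^{\alpha}=\exp(\alpha\log h)$. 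Descending to $\Gamma_A$ and restricting to the $p^n$-torsion, on which $\zeta_d^{\mu_i}\in\Z_p$ acts through its image in $\Z/p^n\Z$, this identity reads $\delta[h(T)]=\zeta_d^{\mu_i}[h(T)]$, placing $[h(T)]$ in $\Gamma_A[p^n]_{\mu_i}$ as required.

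Injectivity of \eqref{eq:map-tn-to-gamma-a-cyclic} then follows immediately from Proposition \ref{prop:inj} applied to the subfamily of vectors concentrated in the $i$-th coordinate. I do not foresee a serious obstacle: the proposition is essentially a cyclic refinement of Proposition \ref{prop:const}, and the only point requiring care is the interpretation of the formal $\Z_p$-power $h(T)^{\zeta_d^{\mu_i}}$, which is unambiguous on the $p^n$-torsion since any two integers congruent to $\zeta_d^{\mu_i}\pmod{p^n}$ represent the same class in $\Gamma_A[p^n]$.
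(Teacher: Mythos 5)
Your overall architecture matches the paper's: the torsion claim is obtained by specializing Proposition \ref{prop:const} to the vector $\vec{\pi}$ concentrated in the $i$-th coordinate (using the diagonal form \eqref{eq:log-fgl-diagonal} and $h(T^{\mu_j};0)=1$), the eigenvalue comes from $\zeta_d^{p^k}=\zeta_d$ forcing $\delta(l(\pi T^{\mu_i}))=\zeta_d^{\mu_i}\,l(\pi T^{\mu_i})$, and injectivity is Proposition \ref{prop:inj} restricted to a coordinate axis. All of that is fine and is what the paper does.

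There is, however, a genuine gap in your final step, and it sits exactly where the paper spends its effort. You establish the exact identity $\delta(h(T))=h(T)^{\zeta_d^{\mu_i}}$ in $\Gamma$ via $\Z_p$-exponentiation, and then assert that on $\Gamma_A[p^n]$ this ``reads'' $\delta[h]=\zeta_d^{\mu_i}[h]$ because integers congruent mod $p^n$ act identically on $p^n$-torsion. But the required comparison is between $[h^{\zeta_d^{\mu_i}}]$ and $[h^{s^{\mu_i}}]$ for an integer $s\equiv\zeta_d \bmod p^n$, and their ratio is $h^{c}$ with $c=\zeta_d^{\mu_i}-s^{\mu_i}$ a \emph{non-integer} element of $p^n\Z_p$. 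The group $\Gamma_A$ carries no a priori $\Z_p$-module structure and no topology, so knowing that $m\mapsto[h]^m$ kills $p^n\Z$ does not by itself kill $p^n\Z_p$: one cannot pass from $[h^{p^n}]=1$ to $[(h^{p^n})^u]=1$ for $u\in\Z_p$ by abstract nonsense, since $\Z_p$-powers need not preserve the subgroup $((A^{\an})^*\cap\Gamma)\Gamma_0$. To close the gap you must show directly that $[\exp(c\sum_k \pi^{p^k}T^{\mu_i p^k}/p^k)]=1$ in $\Gamma_A$; this is done by substituting the decomposition $T^{\mu_i p^k}=e_{ii}^{(k)}T^{\mu_i}+b_i^{(k)}(T)$ from \eqref{eq:decofpower}, using $l_i(\pi)=0$ to delete the $T^{\mu_i}$-terms, and invoking Lemma \ref{lem:evaluation} with $|c|\le|p|^n<|p^{n-1}|$. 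In other words, the hypothesis $\pi\in T_{n,i}$ and Lemma \ref{lem:evaluation} must be used a second time, in the eigenvalue argument itself, not only in the torsion argument; the paper packages this by computing $\delta(h)\,h^{-s^{\mu_i}}=\exp\bigl((\zeta_d^{\mu_i}-s^{\mu_i})\sum_k \pi^{p^k}b_i^{(k)}(T)/p^k\bigr)$ from the outset. With that insertion your proof is complete.
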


\begin{proof}
We see $[h(T)] \in \Gamma_A[p^n]$
from Proposition \ref{prop:const}.
In order to show 
$[h(T)] \in \Gamma_A[p^n]_{\mu_i}$,
we take $s \in \Z$ such that $|\zeta_d - s| < |p^n|$.
We need to show $\delta([h(T)]) = [h(T)^{s^{\mu_i}}]$.
Recall that
$h(T)=\exp(l(\pi T^{\mu_i}))$
where
$l(T) = \sum_{k=0}^{\infty} \frac{1}{p^k} T^{p^k}.$
Since  $\delta (l(T)) 
= l(\zeta_d T)=\zeta_d l(T)$,
we get (see \eqref{eq:decomp}),
\begin{align*}
\delta&(h(T))  h(T)^{-s^{\mu_i}}
= \exp((\zeta_d^{\mu_i}-s^{\mu_i}) \sum_{k=0}^{\infty} 
    \frac{\pi^{p^k}}{p^k} T^{\mu_i p^k} )
\\
&= \exp((\zeta_d^{\mu_i}-s^{\mu_i}) 
\sum_{k=0}^{\infty} \frac{\pi^{p^k}}{p^k} 
  (e_{ii}^{(k)} T^{\mu_i} + b_i^{(k)}(T)) )
\\
&\overset{(*)}{=} \exp((\zeta_d^{\mu_i}-s^{\mu_i}) 
\sum_{k=0}^{\infty} \frac{\pi^{p^k}}{p^k} b_i^{(k)}(T) ),
\end{align*}
where we used $l_i(\pi)=0$ at $(*)$.
Now Lemma \ref{lem:evaluation} completes the proof.
\end{proof}

\section{Curves over $p$-adic fields}

\subsection{Notations}\label{sec:model}
We use the same notations for
$p, K, | \cdot |, O_K, \sM_K$ and $\F$
as in the previous section.
Let $\pi_K$ be a uniformizer of $O_K$.
Let $\sX$ be a regular scheme over $O_K$
such that
the structure morphism $\sX \to \Spec O_K$
is separated, smooth and projective of relative dimension one
with geometrically connected fibers.
We write $X := \sX \otimes_{O_K} K$
(resp. $Y := \sX \otimes_{O_K} \F$)
for  the generic (resp. closed) fiber.
Suppose that the genus $g$ of $X$
satisfies $g \geq 2$.
Suppose also that there exists 
a section $\widetilde{\infty} : \Spec O_K \to \sX$ 
of the structure morphism.
We regard the image of $\widetilde{\infty}$
as a reduced closed subscheme of $\sX$,
which is also denoted by $\widetilde{\infty}$.
We write $\infty = \widetilde{\infty} \otimes_{O_K} K$ 
(resp. $\overline{\infty}= \widetilde{\infty} \otimes_{O_K} \F$) 
for the generic (resp. closed) point of $\widetilde{\infty}$.

Recall that the inclusion map
$j : X \to \sX$ (resp. $i: Y \to \sX$)
induces 
an isomorphism $j^* : \Pic(\sX) \cong \Pic(X)$
(resp. a surjection $i^* : \Pic(\sX) \to \Pic(Y)$).
The composition 
\begin{equation}\label{eq:pic-reduction}
 \Pic(X) \overset{j^{* -1}}{\cong} \Pic(\sX)
\overset{i^*}{\to} \Pic(Y),
\qquad
\sL \mapsto \bar{\sL} :=i^* j^{*-1} \sL
\end{equation}
is called the {\it specialization}.
The kernel of this map is denoted by
$\widehat{\Jac}(X)$:
\begin{equation}\label{eq:jac-hat}
 \widehat{\Jac}(X) := 
\{ \sL \in \Pic(X) ~|~ \bar{\sL}= 0 \}.
\end{equation}

\subsection{Existance of a good trivialization}
Let $\hat{\sO}_{\sX, \overline{\infty}}$
be the completion of the local ring 
$\sO_{\sX, \overline{\infty}}$
of $\sX$ at $\overline{\infty}$,
which is a regular two dimensional local $O_K$-algebra.
The prime ideal $\mathcal{P}_{\widetilde{\infty}}$ of
$\hat{\sO}_{\sX, \overline{\infty}}$ 
defined by ${\widetilde{\infty}}$
is of height one, hence principal.
Thus there is an isomorphism 
\begin{equation}\label{eq:def-of-sn0}
 \sN_0 :  \hat{\sO}_{\sX, \overline{\infty}} \cong O_K[[\tt]] 
\end{equation}
of $O_K$-algebras such that
$\sN_0^{-1}((\tt)) = \mathcal{P}_{\widetilde{\infty}}$.
Note that we have
$O_K[[\tt]] 
   \subset K((\tt)) \cap H$
and for any $f \in \hat{\sO}_{\sX, \overline{\infty}}$
\begin{equation}\label{eq:val-deg}
 \deg \sN_0(f) = - v_{\widetilde{\infty}}(f), \qquad
   || \sN_0(f) || = | \pi_K |^{v_Y(f)},
\end{equation}
where $v_{\widetilde{\infty}}$ (resp. $v_Y$)
is the normalized discrete valuation on 
$\hat{\sO}_{\sX, \overline{\infty}}$ 
given by 
$\mathcal{P}_{\widetilde{\infty}}$
(resp. the height one prime ideal defined by $Y$).
We write $\sN$ for the induced
embedding of the function field
$K(X)$ of $X$ into the fraction field of $O_K[[\tt]]$.
The isomorphism $\sN_0$ also induces four maps:
\begin{align*}
&N_0 : \hat{\sO}_{X, \infty} \cong K[[\tt]],&
&\bar{N}_0 : \hat{\sO}_{Y, \overline{\infty}} \cong \F[[\tt]],&
\\
&N : \Spec K((\tt)) \to X,&
&\bar{N} : \Spec \F((\tt)) \to Y.&
\end{align*}
We denote 
the composition map
$\Spec O_K[[\tt]] \overset{\sN_0}{\to} 
\Spec \hat{\sO}_{\sX, \bar{\infty}} \to \sX$
(resp.
$\Spec K[[\tt]] \overset{N_0}{\to} 
\Spec \hat{\sO}_{X, {\infty}} \to X$,
resp.
$\Spec \F[[\tt]] \overset{\bar{N}_0}{\to} 
\Spec \hat{\sO}_{Y, \bar{\infty}} \to Y$)
by the same letter $\sN_0$
(resp. $N_0$, resp. $\bar{N}_0$).

Using $N$ and $\bar{N}$,
we define 
(see \eqref{eq:affinering}).
\begin{equation}\label{eq:def-of-a-and-b}
A=A(X, \infty, N) \subset K((\tt)), \qquad
B=A(Y, \overline{\infty}, \bar{N}) \subset \F((\tt)).
\end{equation}

\begin{proposition}\label{prop:a-is-integral}
\begin{enumerate}
\item
The element $A$ of $\Gr^{\alg}(K)$ is integral
(see Definition \ref{def:bounded} (2)).
\item
If $WG_{\infty}(X)=WG_{\overline{\infty}}(Y)$,
then $A$ is strictly integral
(see Definition \ref{def:bounded} (3)).
\end{enumerate}
\end{proposition}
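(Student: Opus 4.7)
The plan is to identify $O(A) = \sN(\mathcal{A})$, where $\mathcal{A} := H^0(\sX \setminus \widetilde{\infty}, \sO_\sX)$, to establish $A^{\red} = B$, and then to deduce both statements from Proposition \ref{prop:integral}. First I would observe that $\sN$ sends $\mathcal{A}$ into $O_K[[\tt]][T]$, since $\mathcal{A}$ lives in the localization $\hat{\sO}_{\sX, \overline{\infty}}[\mathcal{P}_{\widetilde{\infty}}^{-1}]$ which $\sN_0$ identifies with $O_K[[\tt]][T]$. Using \eqref{eq:val-deg}, for $f \in H^0(X \setminus \{\infty\}, \sO_X)$ the condition $||\sN(f)|| \leq 1$ coincides with $v_Y(f) \geq 0$, and hence with $f \in \mathcal{A}$; thus $O(A) = \sN(\mathcal{A})$. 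In particular $A$ is bounded, since every element of $A$ is a $K$-scalar multiple of a norm-$\leq 1$ element (multiply by an appropriate power of $\pi_K$ to kill the $v_Y$ pole). Moreover the compatibility between $\sN_0$ and $\bar{N}_0$ gives $\red \sN(f) = \bar{N}(\bar{f})$ where $\bar{f} := f|_Y \in H^0(Y \setminus \{\overline{\infty}\}, \sO_Y)$, and so $A^{\red} \subseteq B$.

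Next I would prove the reverse inclusion. Set $\mathcal{A}_n := H^0(\sX, \sO_\sX(n \widetilde{\infty}))$, which is $O_K$-free of finite rank. For $n \geq 2g - 1$ both $H^1(X, \sO_X(n \infty))$ and $H^1(Y, \sO_Y(n \overline{\infty}))$ vanish (using $g(X) = g(Y) = g$ since $\sX$ is smooth and proper over $O_K$), so Nakayama together with Grothendieck's cohomology and base change theorem yields $R^1 \pi_* \sO_\sX(n \widetilde{\infty}) = 0$ and an isomorphism $\mathcal{A}_n \otimes_{O_K} \F \cong H^0(Y, \sO_Y(n \overline{\infty}))$. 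Given $\bar{g} \in B$, one can take $n \geq 2g - 1$ with $\bar{g} \in H^0(Y, \sO_Y(n \overline{\infty}))$ and lift to $g \in \mathcal{A}_n$; then $\red \sN(g) = \bar{N}(\bar{g})$, so $\deg \bar{N}(\bar{g}) \in \M(A^{\red})$. This yields $\M(B) \subseteq \M(A^{\red})$, and combined with $A^{\red} \subseteq B$ we conclude $A^{\red} = B$.

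Finally, I would apply Proposition \ref{prop:integral}. For part (1), the equality $\M(A^{\red}) = \M(B) \in \Maya$ together with $i(A^{\red}) = i(B) = 1 - g = i(A)$ (the last using $g(X) = g(Y)$) verifies condition (1)(c), so $A$ is integral. For part (2), the hypothesis $WG_\infty(X) = WG_{\overline{\infty}}(Y)$ forces $\M(A) = \M(B) = \M(A^{\red})$, in particular $\M(A) \supseteq \M(A^{\red})$, so condition (2)(c') is satisfied and $A$ is strictly integral. The main obstacle will be the cohomology and base change step in the second paragraph, which requires care with coherence, flatness, and the vanishing of $R^1 \pi_*$; once $A^{\red} = B$ is in hand, both conclusions are formal bookkeeping with Maya diagrams.
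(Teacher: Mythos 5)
Your proof is correct, and the overall skeleton agrees with the paper's: identify the reduction $A^{\red}$ with the coordinate ring $B$ of $Y\setminus\{\overline{\infty}\}$ and then invoke Proposition \ref{prop:integral} via conditions (1)(c) and (2)(c'). The paper, however, proves the more general Proposition \ref{prop:trivialization} (for the Krichever pairs $(\sO_X(D),\sigma(D))$ with $\Supp(D)\cap D_*=\emptyset$, needed later for Theorem \ref{thm:stronger-main}) and obtains the present statement as the case $D=0$; accordingly its two key inputs differ from yours. For boundedness it applies the Weierstrass preparation theorem in $\hat{\sO}_{\sX,\overline{\infty}}$ to an arbitrary rational function with no poles on $D_*$, whereas your argument exploits that for $D=0$ every section lies in $\mathcal{A}=H^0(\sX\setminus\widetilde{\infty},\sO_\sX)\subset\hat{\sO}_{\sX,\overline{\infty}}[\mathcal{P}_{\widetilde{\infty}}^{-1}]$ --- simpler, but specific to the trivial divisor. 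For the identity $A^{\red}=B$ the paper asserts $V(\bar{\sL},\bar{\sigma})=V^{\red}$ ``by construction,'' the point being that $\sX\setminus\widetilde{\infty}$ is affine (the fiberwise-ample section $\widetilde{\infty}$ makes its complement affine), so reduction of sections modulo $\pi_K$ is surjective for free; you instead route the surjectivity through the proper model, using $H^1(Y,\sO_Y(n\overline{\infty}))=0$ for $n\geq 2g-1$, Grothendieck's cohomology-and-base-change and Nakayama. Both are valid; your version makes explicit a lifting step the paper leaves implicit, at the cost of heavier machinery, and your final passage from $\M(B)\subseteq\M(A^{\red})$ to $A^{\red}=B$ is an unnecessary detour since your lifting already gives the inclusion $B\subseteq A^{\red}$ elementwise.
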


We will prove a more general proposition.
Define a subset of $X$ by
\begin{equation}\label{eq:ustar}
D_* := \{ P ~|~ \text{$P$ is a closed point of
$X$ whose reduction is $\overline{\infty}$} \}
\setminus  \{ \infty \}.
\end{equation}
Proposition \ref{prop:a-is-integral}
is a special case of the following proposition
applied to $D=0$:

\begin{proposition}\label{prop:trivialization}
Let $D$ be a divisor on $X$
such that 
$\Supp(D) \cap D_* = \emptyset$.
Let $(\sL, \sigma) := 
(\sO_X(D), \sigma(D))$ be
the Krichever pair constructed in \S \ref{ex:kripair}.
Then we have the following.
\begin{enumerate}
\item
The element 
$V(\sL, \sigma)$ of $\Gr^{\alg}(K)$ is integral.
\item
If moreover $\M(\sL)=\M(\bar{\sL})$
(see \eqref{eq:pic-reduction}),
then $V(\sL, \sigma)$ 
is strictly integral.
\end{enumerate}
\end{proposition}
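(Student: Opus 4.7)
The plan is to construct an integral model of the Krichever pair $(\sL, \sigma)$ over $\sX$ and to identify the reduction $V^{\red}$ of $V := V(\sL, \sigma)$ with the Krichever space $\bar V := V(\bar\sL, \bar\sigma)$ on the special fiber, where $\bar\sigma := \sigma(\bar D)$ is built from the specialized divisor $\bar D$ on $Y$. Write $D = D' + n_\infty \infty$ with $\infty \notin \Supp(D')$, let $\widetilde D'$ be the Zariski closure of $D'$ in $\sX$, and set $\widetilde D := \widetilde D' + n_\infty \widetilde\infty$ and $\widetilde\sL := \sO_\sX(\widetilde D)$. Because $\Supp(D) \cap D_* = \emptyset$, no horizontal component of $\widetilde D'$ specializes to $\overline\infty$, so $\widetilde\sL$ is trivialized near $\overline\infty$ simply by the monomial $T^{n_\infty}$; consequently $\sigma$ extends canonically to an $O_K[[\tt]]$-linear trivialization $\widetilde\sigma : \sN_0^*\widetilde\sL \cong O_K[[\tt]]$ whose reduction modulo $\sM_K$ agrees with $\bar\sigma$.

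The crucial computation is that $\widetilde\sigma(f) = T^{-n_\infty}\sN^* f$ lies in $O_K[[\tt]][T]$ for every $f \in H^0(\sX \setminus \widetilde\infty, \widetilde\sL)$. Indeed, since $\widetilde D$ is horizontal one has $v_Y(f) \geq 0$; multiplying $f$ by $\tt^{-v_{\widetilde\infty}(f)}$ kills any pole at $\widetilde\infty$ and, because $\hat\sO_{\sX, \overline\infty}$ is a regular two-dimensional UFD with exactly the two height-one primes $(\tt)$ and $(\pi_K)$, the result lies in $\hat\sO_{\sX, \overline\infty}$, whereupon \eqref{eq:val-deg} forces $\|\sN^* f\| \leq 1$. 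The same argument run in reverse shows that $\sigma(f) \in O_K[[\tt]][T]$ in fact forces $v_Y(f) \geq 0$. Writing $V_n := V \cap T^n K[[\tt]] = \sigma(H^0(X, \sL(n\infty)))$, this equivalence gives the identification of the integral model with the integral lattice: $\widetilde V_n := \widetilde\sigma(H^0(\sX, \widetilde\sL(n\widetilde\infty))) = O(V_n)$.

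For $n \gg 0$, cohomology and base change combined with Riemann-Roch on both fibers yield $\widetilde V_n \otimes_{O_K} \F \cong \bar\sigma(H^0(Y, \bar\sL(n\overline\infty))) = \bar V_n := \bar V \cap T^n\F[[\tt]]$, of common dimension $\deg\sL + n + 1 - g$. Reducing the equality $O(V_n) = \widetilde V_n$ modulo $\sM_K$ therefore gives $V_n^{\red} = \bar V_n$, and taking unions over $n$ produces $V^{\red} = \bar V$. This shows at once that $V$ is bounded (each $V_n$ contains the full $O_K$-lattice $\widetilde V_n$), that $\M(V^{\red}) = \M(\bar\sL) \in \Maya$, and that $i(V^{\red}) = \deg\bar\sL - g + 1 = \deg\sL - g + 1 = i(V)$; Proposition \ref{prop:integral}(1) now delivers~(1). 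For~(2), the extra hypothesis $\M(\sL) = \M(\bar\sL)$ becomes $\M(V) = \M(V^{\red})$, and Proposition \ref{prop:integral}(2) supplies strict integrality. The main obstacle is the identification $V^{\red} = \bar V$, in particular the inclusion $V^{\red} \subset \bar V$, which is handled precisely via the valuation-theoretic equivalence between ``all Laurent coefficients of $\sN^* f$ lie in $O_K$'' and ``$v_Y(f) \geq 0$'' provided by \eqref{eq:val-deg}.
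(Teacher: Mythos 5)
Your proposal is correct and follows essentially the same route as the paper: both arguments construct the integral model of the Krichever pair over $\sX$, identify $V^{\red}$ with $V(\bar{\sL},\bar{\sigma})$, note that specialization preserves the degree (hence the index) of $\sL$, and conclude by Proposition \ref{prop:integral}. The only substantive difference is how integrality of sections is justified: the paper proves a lemma by Weierstrass preparation (any $f\in K(X)$ with no pole on $D_*$ satisfies $\sN(f)\in O_K[[\tt]][\frac{1}{\pi_K},T]$), whereas you use the valuative criterion for the normal ring $\hat{\sO}_{\sX,\overline{\infty}}$ together with \eqref{eq:val-deg}; your cohomology-and-base-change identification of $V^{\red}$ with $V(\bar{\sL},\bar{\sigma})$ is also more detailed than the paper's ``by construction.'' One slip to correct: $\hat{\sO}_{\sX,\overline{\infty}}\cong O_K[[\tt]]$ does \emph{not} have only the two height-one primes $(\tt)$ and $(\pi_K)$ --- by Weierstrass preparation its height-one primes are $(\pi_K)$ together with one prime for each closed point of $X$ reducing to $\overline{\infty}$, that is, one for each point of $D_*\cup\{\infty\}$. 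What is true, and what your argument actually needs, is that the hypothesis $\Supp(D)\cap D_*=\emptyset$ (which you have already invoked to trivialize $\widetilde{\sL}$ near $\overline{\infty}$) forces $f$ to have non-negative valuation at every height-one prime other than $(\tt)$ and $(\pi_K)$; with that repair the valuative criterion applies and the rest of your proof goes through.
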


For the proof, we need a lemma.

\begin{lemma}
Let $f \in K(X)$ be a rational function on $X$
that has no pole on $D_*$.
Then, we have $\sN(f) \in O_K[[\tt]][\frac{1}{\pi_K}, T]$.
In particular, we have $|| \sN(f) || < \infty$.
\end{lemma}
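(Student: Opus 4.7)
The plan is to analyze $\sN(f)$ via the polar divisor $D = (f)_\infty$ of $f$ on $\sX$. Each irreducible component of $D$ is either the vertical divisor $Y$ or a horizontal one, namely the closure in $\sX$ of a closed point $P \in X$. A horizontal component $\overline{\{P\}}$ passes through $\overline{\infty}$ if and only if the specialization of $P$ equals $\overline{\infty}$, i.e., $P \in \{\infty\} \cup D_*$ by \eqref{eq:ustar}. Hence the hypothesis that $f$ has no pole on $D_*$ forces the only possible components of $D$ passing through $\overline{\infty}$ to be $Y$ and $\widetilde{\infty}$.

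Write $D = nY + m\widetilde{\infty} + D'$ with $n, m \geq 0$ and $\Supp(D')$ disjoint from $\overline{\infty}$. Set $R := \hat{\sO}_{\sX, \overline{\infty}}$. The element $\pi_K$ is a local equation for $Y$ in $R$, and by the choice $\sN_0^{-1}((\tt)) = \mathcal{P}_{\widetilde{\infty}}$, the element $\tau := \sN_0^{-1}(\tt) \in R$ is a local equation for $\widetilde{\infty}$. Consequently $\pi_K^n \tau^m f$ has non-negative valuation at every height-one prime of the two-dimensional regular local ring $\sO_{\sX, \overline{\infty}}$; since this ring is the intersection of its DVR-localizations, $\pi_K^n \tau^m f$ belongs to $\sO_{\sX, \overline{\infty}} \subset R$. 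Setting $g(T) := \sN_0(\pi_K^n \tau^m f) \in O_K[[\tt]]$, we obtain
\[ \sN(f) = \pi_K^{-n} T^m g(T) \in O_K[[\tt]]\bigl[\tfrac{1}{\pi_K}, T\bigr], \]
and in particular $\|\sN(f)\| \leq |\pi_K|^{-n} \|g(T)\| \leq |\pi_K|^{-n} < \infty$.

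The main technical point is the identification in the first paragraph: horizontal prime components of $D$ passing through $\overline{\infty}$ correspond bijectively to the points of $\{\infty\} \cup D_*$. This is a standard piece of reduction theory for smooth models of curves: the closure in $\sX$ of a closed point $P \in X$ is an integral horizontal divisor, and it meets $Y$ precisely at the specialization(s) of $P$. Once this dictionary is granted, the rest of the argument is essentially formal: we absorb the polar contributions through $\overline{\infty}$ into the explicit factors $\pi_K^n$ and $\tau^m$, while the remaining poles, lying outside every Zariski neighborhood of $\overline{\infty}$, are invisible in $R$.
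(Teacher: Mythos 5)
Your argument is correct in substance but follows a genuinely different route from the paper. The paper stays entirely inside the complete local ring $\hat{\sO}_{\sX,\overline{\infty}} \cong O_K[[\tt]]$: it writes $f$ as a ratio of two elements of $\sO_{\sX,\overline{\infty}}$, applies the Weierstrass preparation theorem to obtain $\sN(f)=\pi_K^n P(\tt)Q(\tt)^{-1}u(T)$ with $P,Q$ distinguished polynomials and $u$ a unit, and then observes that a nonzero root $\alpha$ of $Q$ (necessarily with $|\alpha|<1$) would force $f$ to have a pole at a point of $D_*$; hence $Q(\tt)=T^{-m}$. You instead argue globally on the arithmetic surface, decomposing the polar divisor as $nY+m\widetilde{\infty}+D'$ with $\overline{\infty}\notin\Supp(D')$ and using normality (the local ring as the intersection of its DVR localizations at height-one primes) to conclude that $\pi_K^n\tau^m f$ is regular at $\overline{\infty}$. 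Both proofs hinge on the same geometric fact---the only poles of $f$ visible near $\overline{\infty}$ are along $Y$ and $\widetilde{\infty}$---but encode it differently: roots of the distinguished denominator versus prime divisors through $\overline{\infty}$. Your version is more geometric and makes the exponents $n,m$ explicit as multiplicities; the paper's is more self-contained, needing only Weierstrass preparation.

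One wrinkle in your write-up: $\tau=\sN_0^{-1}(\tt)$ lives in the completion $\hat{\sO}_{\sX,\overline{\infty}}$ and not a priori in $\sO_{\sX,\overline{\infty}}$, so the claim that $\pi_K^n\tau^m f$ belongs to $\sO_{\sX,\overline{\infty}}$ needs a word of justification. Two routine fixes: either note that $\sO_{\sX,\overline{\infty}}$ is a regular local ring, hence a UFD, so the height-one prime of $\widetilde{\infty}$ already has a generator $\tau_0\in\sO_{\sX,\overline{\infty}}$ with $\tau=u\tau_0$ for a unit $u$ of the completion, and run your argument with $\tau_0$; or carry out the valuation-theoretic step directly in the normal ring $\hat{\sO}_{\sX,\overline{\infty}}$, checking that each of its height-one primes contracts either to $(0)$ or to a height-one prime of $\sO_{\sX,\overline{\infty}}$, so that the divisorial bounds on $f$ pull back. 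With either repair the proof is complete.
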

\begin{proof}
Since $f$ can be written as a ratio of two elements
of $\sO_{\sX, \overline{\infty}} 
(\subset \hat{\sO}_{\sX, \overline{\infty}})$,
Weierstrass preparation theorem shows that
there are two distinguished polynomials
$P(\tt), Q(\tt) \in O_K[\tt]$,
$n \in \Z$, 
and $u(T) \in O_K[[\tt]]^*$
such that $\sN(f) = \pi_K^n P(\tt) Q(\tt)^{-1} u(T)$.
Since a zero of a distinguished polynomial
has absolute value $<1$, 
the assumption implies that $Q$ can be chosen so that
it has no zero other than $\tt=0$,
that is, $Q(\tt)=T^m$ for some $m \in \Z$.
\end{proof}

\begin{proof}[Proof of Proposition \ref{prop:trivialization}]
The lemma proves that $V:=V(\sL, \sigma)$ is bounded.
With the notations in \eqref{eq:pic-reduction},
we set $\tilde{\sL} = j^{* -1}(\sL)$ 
and $\bar{\sL} = i^* \tilde{\sL}$.
Recall that the $N$-trivialization 
$\sigma : N^* \sL \cong K((\tt))$ comes from
an isomorphism $\sigma_0 : N_0^* \sL \cong K[[\tt]]$.
By the assumption 
$\Supp(D) \cap D_*=\emptyset$,
there exists a unique isomorphism
$\tilde{\sigma}_0$ which fits into a commutative diagram
\[
\begin{matrix}
N_0^* {\sL} 
&\overset{{\sigma}_0}{\cong} 
&K[[\tt]]
\\
\cup & & \cup
\\
\sN_0^* \tilde{\sL} 
&\overset{\tilde{\sigma}_0}{\cong} 
&O_K[[\tt]].
\end{matrix}
\]
By restricting $\tilde{\sigma}_0$ to $Y$, 
we get an isomorphism
$\bar{\sigma}_0 : \bar{N}_0^* \bar{\sL} \cong \F[[\tt]]$,
which induces
an $\bar{N}$-trivialization $\bar{\sigma}$  of $\bar{\sL}$.
By construction,
we have $V(\bar{\sL}, \bar{\sigma})=V^{\red}$
(see \eqref{eq:v-red-def}).
Since the degree of an invertible sheaf is
preserved by the specialization  \eqref{eq:pic-reduction},
$\M(V^{\red})$ is a Maya diagram having 
the same index with $\M(V)$.
Now (1) follows from Proposition \ref{prop:integral} (1).
(2) follows from (1) and Proposition \ref{prop:integral} (2).
\end{proof}

\subsection{$p^n$-torsion points of the Jacobian}
From now on we suppose $WG_{\infty}(X)=WG_{\overline{\infty}}(Y)$.
Then $A$ (which we defined in \eqref{eq:def-of-a-and-b}) 
is strictly integral by Proposition \ref{prop:a-is-integral},
hence we can apply the results of \S \ref{sect:pn-tor-pt}.
Let ${\boldsymbol \mu}=(\mu_1, \ldots, \mu_g)$
be as in \eqref{eq:vecmu}.
Let $n \in \Z_{>0}$.
In view of \eqref{eq:trivialreductionaction},
the composition of \eqref{eq:map-tn-to-gamma-a},
\eqref{eq:gamma-to-gr-mod-sim} and \eqref{eq:krichever-isom}
defines an injective map
(see \eqref{eq:jac-hat})
\begin{equation}\label{eq:map-tn-to-jac-tor}
T_n \to \widehat{\Jac}(X)[p^n] 
\qquad
  \vec{\pi} \mapsto [h_{{\boldsymbol \mu}}(T; \vec{\pi})A]_A.
\end{equation}
(See \eqref{eq:multi-h} for the definition of 
$h_{{\boldsymbol \mu}}(T; \vec{\pi})$.)

\begin{proposition}\label{cor:torsion}
Suppose that $Y$ is ordinary and that $p \geq 2g$.
Then there exists a finite extension $K'$ of $K$
such that,
upon replacing $K$ by $K'$,
we have $|T_1| = p^g$ and
\eqref{eq:map-tn-to-jac-tor}
is bijective for $n=1$.
\end{proposition}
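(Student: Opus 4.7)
The plan is to prove both assertions --- $|T_1|=p^g$ and bijectivity of \eqref{eq:map-tn-to-jac-tor} at $n=1$ --- by a counting argument, taking advantage of the fact that the map is already injective by construction.

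First I would verify the hypothesis of Proposition \ref{prop:existance-of-p-tor-el}. Since $p\geq 2g$ and $Y$ is ordinary, Proposition \ref{cor:hassewitt} identifies $\mathbf{e}^{(1)} \bmod \sM_K$ with the Hasse--Witt matrix of $Y$ in the Hermite basis, and ordinariness forces $\det(\mathbf{e}^{(1)}) \in O_K^*$. Proposition \ref{prop:existance-of-p-tor-el} then produces a finite extension $K'$ of $K$ such that $|T_1|=p^g$ over $K'$. I would take this $K'$ henceforth, enlarging further as required below.

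Next, the map \eqref{eq:map-tn-to-jac-tor} already carries $T_1$ injectively into $\widehat{\Jac}(X)[p]$, and the target has cardinality $p^g$ after a suitable further finite extension of $K$ (absorbed into $K'$). This is the standard fact --- recalled in the discussion preceding Theorem \ref{thm:main2} --- that $\widehat{\Jac}(X)[p]$ is the $p$-torsion of the $g$-dimensional formal group $\hat{J}_X$ associated to $\Jac(X)$, and that $\hat{J}_X$ has height $g$ precisely because $Y$ is ordinary. An injection between two finite sets of the same cardinality $p^g$ is automatically a bijection, which completes the proof.

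Essentially no genuine obstacle remains: all the substantive work is encoded in Propositions \ref{cor:hassewitt} (linking ordinariness to invertibility of $\mathbf{e}^{(1)}$ over $O_K$) and \ref{prop:existance-of-p-tor-el} (producing $p^g$ solutions of $\vec l(\vec\pi)=0$ in the prescribed disc via Hensel's lemma applied to the Hasse--Witt count \eqref{eq:number-of-sol-hasse-witt}). The only delicate point is to take $K'$ large enough to simultaneously witness $T_1$ at its full size and the full rational $p$-torsion of $\hat{J}_X$; this is harmless since both conditions are satisfied over a single finite extension.
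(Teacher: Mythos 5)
Your proposal is correct and follows essentially the same route as the paper: invoke Proposition \ref{cor:hassewitt} to get $\det(\mathbf{e}^{(1)})\in O_K^*$, apply Proposition \ref{prop:existance-of-p-tor-el} to get $|T_1|=p^g$ over a finite extension, and conclude by counting against the injectivity of \eqref{eq:map-tn-to-jac-tor}. The only (immaterial) difference is that the paper avoids your second field extension by using only the upper bound $|\widehat{\Jac}(X)[p]|\leq p^g$, valid over any $K$ since $Y$ is ordinary, rather than enlarging $K$ until equality holds.
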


\begin{proof}
Let $\mathbf{e}^{(k)} \in M_g(O_K)$
be the matrix introduced in \eqref{eq:decofpower}.
Since $Y$ is ordinary and $p \geq 2g$,
Proposition \ref{cor:hassewitt} shows that
$\det(\mathbf{e}^{(1)}) \in O_K^*$.
By Proposition \ref{prop:existance-of-p-tor-el},
it holds that $|T_1|=p^g$ if we replace $K$ by
its finite extension.
On the other hand,
we have $|\widehat{\Jac}(X)[p]| \leq p^g$
because $Y$ is ordinary.
Since \eqref{eq:map-tn-to-jac-tor} is injective,
it is surjective as well.
\end{proof}

\subsection{Proof of Theorem \ref{thm:main2}}
The statement of the theorem is not affected by
the base change of the base field $K$.
By Proposition \ref{prop:existance-of-p-tor-el},
we may assume that the set
\eqref{eq:p-torsion-elements-in-t1}
contains at least $p^{g}-p^{g-1}$ elements.
Let $\vec{\pi}$ be an element of
\eqref{eq:p-torsion-elements-in-t1}
and set $h(T):=h_{{\boldsymbol \mu}}(T; \vec{\pi})$.
In view of Proposition \ref{cor:torsion},
it suffices to show
$[h(T)A]_A \not\in \Theta$.

The assumption (3) implies that 
${\boldsymbol \mu}=(1, 2, \ldots, g)$ and $\kappa(A)=(g, 0, 0, \ldots)$.
By Proposition \ref{prop:nonvanishing},
we get $h(T)A \cap T^{g-1} K[[\tt]] = 0.$
Now Theorem \ref{thm:krichever} (2d) shows
$[h(T)A]_A \not\in \Theta$.
This completes the proof of Theorem \ref{thm:main2}.
\qed

\subsection{Formal group}
The main result of this subsection is Theorem \ref{thm:fgl} below.
This result will not be used in the proof
of our main results,
but it might be interesting for its own sake.

Let $J_X/K$ be the Jacobian variety of $X$
so that $J_X(K) \cong \Jac(X)$.
Let $\mathcal{J}_X/O_K$ (resp. $\hat{J}_X/O_K$) be the 
N\'eron model (resp. the formal group) of $J_X$.
The group of $O_K$-rational points $\hat{J}_X(O_K)$
on $\hat{J}_X$ is 
naturally identified with $\widehat{\Jac}(X)$
(see \eqref{eq:jac-hat}).

\begin{theorem}\label{thm:fgl}
Suppose $p \geq 2g, ~WG_{\infty}(X)=WG_{\overline{\infty}}(Y)$
and
that $K$ is absolutely unramified.
Then the vector $\vec{l}(X_1, \cdots, X_g)$
defined in \eqref{eq:logarithm}
gives a logarithm function of
a formal group over $O_K$
which is isomorphic to $\hat{J}_X$.
\end{theorem}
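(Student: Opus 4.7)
The plan is in two parts: (a) produce a formal group $G$ over $O_K$ whose logarithm is $\vec{l}$, and (b) identify $G$ with $\hat{J}_X$.

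For (a), I would apply the Hazewinkel functional equation lemma (equivalently, Honda's theorem on formal groups attached to types). Since $K$ is absolutely unramified, $O_K = W(\F)$ carries the canonical Frobenius lift $\sigma$. The essential input is the mod-$p$ congruence
\[
 \mathbf{e}^{(k+1)} \equiv \mathbf{e}^{(1)}\,\sigma(\mathbf{e}^{(k)}) \pmod{p}
 \qquad (k \geq 0),
\]
established by raising \eqref{eq:decofpower} at level $k$ to the $p$-th power, applying \eqref{eq:decofpower} at level $1$ to each resulting factor $T^{\mu_i p}$, and invoking uniqueness of the decomposition \eqref{eq:decomp} modulo $p$ together with $\sigma(a) \equiv a^p \pmod p$ on $O_K$. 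Combined with the invertibility of $\mathbf{e}^{(1)} \bmod p$ (Proposition \ref{cor:hassewitt}(2), since $Y$ is ordinary), this congruence allows one to construct inductively a Honda type $(u_i)_{i \geq 1} \subset M_g(O_K)$ with $u_1 = \mathbf{e}^{(1)} \in GL_g(O_K)$ such that
\[
 p\,\vec{l}(X_1, \ldots, X_g) - \sum_{i \geq 1} u_i\,\vec{l}^{\sigma^i}(X_1^{p^i}, \ldots, X_g^{p^i}) \in M_g(O_K)[[X_1, \ldots, X_g]]^{\oplus g};
\]
at the $n$-th stage, the congruence determines $u_{n-1} \bmod p$ by inverting $\sigma^{n-1}(\mathbf{e}^{(1)}) \bmod p$. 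By Honda's theorem, $F(X,Y) := \vec{l}^{-1}(\vec{l}(X) + \vec{l}(Y))$ then lies in $O_K[[X, Y]]^{\oplus g}$ and defines a formal group $G$ over $O_K$ with $\log_G = \vec{l}$.

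For (b), the identification $G \cong \hat{J}_X$ would proceed through the Artin-Hasse construction of \S\ref{sect:pn-tor-pt}. The set-theoretic map
\[
 \Phi : G(\sM_K) = \sM_K^{\oplus g} \longrightarrow \widehat{\Jac}(X) = \hat{J}_X(\sM_K),
 \qquad
 \vec{\pi} \longmapsto [h_{{\boldsymbol \mu}}(T;\vec{\pi})\,A]_A
\]
is injective by Propositions \ref{prop:inj} and \ref{prop:const}, and restricts to a bijection on $p$-torsion after a finite base extension (Proposition \ref{cor:torsion}). The key claim is $\log_{\hat{J}_X} \circ \Phi = \vec{l}$. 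Since $\log_{\hat{J}_X}$ admits an explicit description as the $p$-adic period integral of the Hermite basis $\omega_1, \ldots, \omega_g$ of $H^0(X, \Omega_{X/K}^1)$ along the Abel-Jacobi image of the formal disk at $\infty$, Proposition \ref{thm:hassewitt} and the iterated Hasse-Witt relation \eqref{eq:hw-mat-prod} identify this integral term-by-term with \eqref{eq:logarithm}. Once equality of logarithms is verified, $\Phi$ becomes a formal group homomorphism, and its derivative at $0$ being the identity (by direct inspection of \eqref{eq:multi-h} and the Krichever correspondence) promotes it to the desired isomorphism $G \cong \hat{J}_X$.

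The main obstacle is the equality $\log_{\hat{J}_X} \circ \Phi = \vec{l}$ in step (b): this requires compatibility between the Artin-Hasse lift of torsion points and the Coleman-integrated logarithm of $\hat{J}_X$, which in turn relies on the Hasse-Witt matrix $\mathbf{e}^{(1)}$ serving as the $\sigma$-linear matrix of Frobenius on $H^1_{\mathrm{crys}}(Y/W(\F))$ via the Hermite basis. A more direct, purely crystalline route would bypass $\Phi$ entirely by identifying the Dieudonn\'e modules of $G$ and $\hat{J}_X$: both are free of rank $g$ over $W(\F)$, with Frobenius represented by $\mathbf{e}^{(1)}$ (Proposition \ref{cor:hassewitt}(1)) in a suitable lift of the Hermite basis, from which the isomorphism follows by the classification of ordinary formal groups.
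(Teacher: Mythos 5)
Your proposal has a genuine gap in part (a). The mod-$p$ congruence $\mathbf{e}^{(k+1)} \equiv \mathbf{e}^{(1)}\sigma(\mathbf{e}^{(k)}) \pmod{p}$ is correct (it is the integral lift of \eqref{eq:hw-mat-prod} via Proposition \ref{thm:hassewitt}), but it is not strong enough to build a Honda type for $\vec{l}$. The condition actually required is $p\,\vec{l} - \sum_{i\geq 1} u_i \vec{l}^{\sigma^i}(X_1^{p^i},\ldots,X_g^{p^i}) \in p\,O_K[[X_1,\ldots,X_g]]^{\oplus g}$ (you dropped the factor $p$ on the right, and Honda's theory needs it); comparing coefficients of $X^{[p^m]}$, at stage $m$ one must divide $\mathbf{e}^{(m)} - \sum_{i=1}^{m-1}p^{i-1}u_i\sigma^i(\mathbf{e}^{(m-i)})$ by $p^{m-1}$, and for $m\geq 3$ this is a congruence modulo higher powers of $p$ that does not follow from the mod-$p$ relations. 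Concretely, for $g=1$ over $\Z_p$ a series $\sum_k e_k X^{p^k}/p^k$ with $e_0=1$ and $e_k\equiv e_1^k \pmod p$ need not be an integral formal-group logarithm: take $e_1=1$, $e_2=1+p$, $e_3=1$ with $p$ odd; then any admissible $u_1,u_2$ force $e_3-u_1e_2-pu_2e_1 = -2p + O(p^2)\notin p^2\Z_p$, so no type exists. The needed higher congruences among the $\mathbf{e}^{(k)}$ are true in the geometric situation, but proving them is essentially the content of the theorem. The paper avoids this circularity by running the argument in the opposite direction: it applies Honda's Proposition 3.3 to the Dieudonn\'e module $\mathbb{D}(\hat{J}_X/O_K;(p))$, generated by the Hermite basis $\hat\omega_1,\ldots,\hat\omega_g$, to obtain a type $u(F)=p+\sum B_iF^i$ annihilating $\hat\omega$, translates the relation $u(F)\hat\omega=0$ into congruences on the expansion coefficients $c_{ij}$ at $\infty$, and identifies the resulting series with $\vec{l}$ via Proposition \ref{thm:hassewitt}. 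Since $\mathbf{e}^{(0)}=I$, both logarithms have the same type and the same linear term, and Honda's Theorem 2 yields simultaneously the integrality of the formal group law of $\vec{l}$ and its strict isomorphism with $\hat{J}_X$.

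Part (b) is also problematic. The paper explicitly remarks after \eqref{eq:formal-loop} that $\vec\pi\mapsto [h_{\boldsymbol\mu}(T;\vec\pi)]$ is \emph{not} a group homomorphism even for the formal group structure of Theorem \ref{thm:fgl}, so your plan to promote $\Phi$ to an isomorphism cannot work as stated; and even if $\log_{\hat{J}_X}\circ\Phi=\vec{l}$ held, it would only control $\Phi$ up to torsion, since $\log$ is not injective. Moreover, both your part (a) (invertibility of $\mathbf{e}^{(1)}\bmod p$) and your closing alternative (classification of \emph{ordinary} formal groups) assume that $Y$ is ordinary, which is not a hypothesis of Theorem \ref{thm:fgl}; the paper's Honda-theoretic proof works in any finite height. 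Your final crystalline sketch is the closest in spirit to the actual argument, but it rests on precisely the identification of Frobenius on the Dieudonn\'e module with $\mathbf{e}^{(1)}$ in a lifted Hermite basis that needs to be established, which is what the paper's residue computation (Proposition \ref{thm:hassewitt}) together with Katz's description of $\mathbb{D}(\hat{J}_X/O_K;(p))$ accomplishes.
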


%More precisely, 
%the formal group defined by $\vec{l}(X_1, \cdots, X_g)$
%is {\it strongly} isomorphic to 
%the formal group law defined by $\hat{J}_X$
%and the Hermite basis.

\begin{proof}
We recall basic facts in  \cite{Ka}.  
Let $(V, 0)$ be a pointed  formal Lie variety over $O_K$. 
The coordinate ring $A(V)$ of the formal Lie variety $V$ is just 
the formal power series ring  $O_K[[X_1, \dots, X_n]]$  and 
$0$ is a distinguished $O_K$-valued point of $V$. 
We also denote by $V_K := V \otimes_{O_K} K$ the 
formal Lie variety over $K$ obtained by 
the extension of scalars. The coordinate ring 
$A(V_K)$  is by definition
the formal power series ring  
$K[[X_1, \dots, X_n]]$. 
For example, the formal completion
$\widehat{\sX}$  at the closed point $\overline{\infty}$ of 
 ${\sX}$ and $\hat{J}_X$  
  are such pointed varieties. 
 The de Rham cohomology $H^i_{\mathrm{dR}}(V/O_K)$  is 
 defined as the $O_K$-module obtained by taking 
 the $i$-th cohomology groups of 
 the formal de Rham complex of $V/O_K$. Then 
 by the formal Poincar\'e lemma, we have canonically 
 $$
 H^1_{\mathrm{dR}}(V/O_K)=
\{f \in A(V_K)_0 \,|\, \text{$df$ is integral}\}/A(V)_0,
 $$
where we set
\[  A(V)_0 := \{ f \in A(V) ~|~ f(0)=0 \}, 
 \quad
    A(V_K)_0 := \{ f \in A(V_K) ~|~ f(0)=0 \}.
\]
 
 Suppose that $(V, 0)$ has a structure of 
 commutative formal Lie groups with the identity $0$.  
 Let $m, p_1, p_2$ be  the sum and projections 
 $V \times V \rightarrow V$.  An element $a$ of 
$H^1_{\mathrm{dR}}(V/O_K)$ 
 is called primitive  if it satisfies 
 $$
 m^*a=p_1^*a+p_2^*a \qquad \text{in} 
\quad H^1_{\mathrm{dR}}(V\times V/O_K). 
 $$
 We define the Dieudonn\'e module $\mathbb{D}(V/O_K)$ of $V/O_K$ as 
 the subgroup of  $H^1_{\mathrm{dR}}(V/O_K)$ consisting of 
 primitive elements. Explicitly, we have
 $$
 \mathbb{D}(V/O_K)=
\{f \in A(V_K)_0 \,|\, 
\text{$df$, $f(X[+]Y)-f(X)-f(Y)$ are integral}
\}/A(V)_0
 $$
 where $[+]$ denotes the addition of $V$. 
 
 To use Honda's theory of commutative formal groups, 
we also need a variant. 
 We let 
 $$
 H^1_{\mathrm{dR}}(V/O_K; (p))=
\{f \in A(V_K)_0\,|\, \text{$df$ is integral}\}/pA(V)_0
 $$
 and define $\mathbb{D}(V/O_K; (p))$ to be 
 $$
\{f \in A(V_K)_0 \,|\, \text{$df$ is integral, 
$f(X[+]Y)-f(X)-f(Y) \in pO_K[[X, Y]]$} \}/pA(V)_0.
 $$
 The Frobenius map 
\[ F: K[[X_1, \ldots, X_g]] 
\rightarrow K[[X_1, \ldots, X_g]], 
\quad
 f(X_1, \ldots, X_g) \mapsto 
f^{\varphi}(X_1^p, \ldots, X_g^p)
\] 
induces 
  the Frobenius action $F$ on these cohomology groups 
  where $\varphi$ is the Frobenius of $\mathrm{Gal}(K/\Q_p)$. 
  (Actually, one can replace $\varphi$ by
any $\Z_p$-algebra automorphism $\varphi_1$ of $K$
satisfying $x^{\varphi_1} \equiv x \mod p$ for all $x \in O_K$.) 
  Then $p^{-1}F$ induces $\varphi$-linear isomorphisms 
 $$ 
 H^1_{\mathrm{dR}}(V/O_K; (p)) \cong H^1_{\mathrm{dR}}(V/O_K), \quad 
 \mathbb{D}(V/O_K, (p)) \cong \mathbb{D}(V/O_K). 
 $$
 We also get isomorphisms 
 $$ 
 H^1_{\mathrm{dR}}(V/O_K; (p))\otimes K \cong H^1_{\mathrm{dR}}(V/O_K)\otimes K, \quad 
 \mathbb{D}(V/O_K; (p))\otimes K \cong \mathbb{D}(V/O_K)\otimes K. 
 $$
Let $\underline{\omega}_{V/O_K} $ be 
the space of invariant differentials 
of $V/O_K$. 
Then we have a canonical map 
$\underline{\omega}_{V/O_K} \rightarrow \mathbb{D}(V/O_K, (p))$
given by integration.
If there is no non-trivial $O_K$-group homomorphism
from $V$ to $\mathbb{G}_a$,
then this map is injective 
and we may regard  
$\underline{\omega}_{V/O_K}$ as 
a subspace of $\mathbb{D}(V/O_K, (p))$. 
Furthermore, if $V$ is of finite height $h$, 
then  $\mathbb{D}(V/O_K, (p))$ 
is a free $O_K$-module of rank $h$ and 
is generated by 
$\underline{\omega}_{V/O_K}$ as an $O_K[F]$-module.
Here $O_K[F]$ is an $O_K$-algebra
generated by $F$ subject to 
a relation $Fa=a^\varphi F$ for all $a \in O_K$.
(This algebra is non-commutative unless $O_K=\Z_p$).

Now we return to the proof of Theorem \ref{thm:fgl}.
We follow the same argument as in \cite[\S VI]{Ka}. 
Let $\omega_1, \dots, \omega_g$ 
be the Hermite basis of $X$ with expansion \eqref{eq:hermitbasis}.
From \eqref{eq:decomp},
we see that the elements
$e_{ij}^{[m]}$ defined in  \eqref{eq:decofpower-00-0}
belong to $O_K$ for any $m, i, j$.
By Proposition \ref{thm:hassewitt},
we have $c_{ij} \in O_K$ for all $i, j$,
and $\omega_1, \ldots, \omega_g$ form
an $O_K$-basis of $H^0(\sX, \Omega_{\sX/O_K}^1)$.
By the Albanese map $\sX \rightarrow \mathcal{J}_X$ 
defined  by using $\widetilde{\infty}$, 
we have a canonical isomorphism 
$$
H^0(\sX, \Omega_{\sX/O_K}^1) \cong 
H^0(\mathcal{J}_X, \Omega_{\mathcal{J}_X/O_K}^1) 
$$
and regard 
$\omega_1, \dots, \omega_g$ 
as elements of $H^0(\mathcal{J}_X, \Omega_{\mathcal{J}_X/O_K}^1)$. 
Then by the formal completion, we have 
a basis   
$\hat{\omega}_1, \dots, \hat{\omega}_g$ 
of $\underline{\omega}_{\hat{J}_X/O_K}
\subset \mathbb{D}(\hat{J}_X/O_K, (p))$. 
By \cite[Proposition 3.3]{Ho}, there exists an element 
$$
u(F)=p+\sum_{i=1}^\infty B_iF^i \in M_g(O_K)[[F]]
$$
such that 
$$u(F)\begin{pmatrix} 
\hat{\omega}_1 \\
\vdots\\
\hat{\omega}_g
\end{pmatrix}=0 \qquad \text{in} \quad 
\mathbb{D}(\hat{J}_X/O_K; (p)),
$$
which can be seen as an equality 
in $H^1_\mathrm{dR}(\sX/O_K; (p))$ as well. 
We write
$$
\begin{pmatrix} 
\hat{\omega}_1 \\
\vdots\\
\hat{\omega}_g
\end{pmatrix}
=\sum_{j=1}^\infty 
\begin{pmatrix} 
 c_{1j}  \\
\vdots\\
 c_{gj}  
\end{pmatrix}
 (\tt)^{j-1} d(\tt) 
=\sum_{j=1}^\infty \vec{c}_j (\tt)^{j-1} d(\tt).$$
It follows that
$$
u(F)\left(
\sum_{j=1}^\infty \frac{\vec{c}_j}{j} (\tt)^j \right) 
\in pO_K[[\tt]]^{\oplus g}. 
$$
By looking the $n$-th coefficients, we have 
$$
p \frac{\vec{c}_n}{n}+B_1 \frac{(\vec{c}_{n/p})^\varphi}{n/p}
+\cdots +B_k \frac{(\vec{c}_{n/p^k})^{\varphi^k}}{n/p^k} +\cdots 
\in pO_K^{\oplus g} 
$$
where 
we regard as $\vec{c}_m=0$ if $m$ is not an integer. 
Hence we have
$$
u(F)
\left(
\sum_{k=0}^\infty
\frac{1}{p^k}\left(
\vec{c}_{p^{k} \mu_1} 
X_1^{p^k}+\cdots+ \vec{c}_{p^{k} \mu_g} X_g^{p^k}\right)
\right)
\in\; pO_K[[X_1, \cdots X_g]]^{\oplus g}. 
$$
On the other hand, 
the formal power series
$\sum_{k=0}^\infty
\frac{1}{p^k}(
\vec{c}_{p^{k} \mu_1} 
X_1^{p^k}+\cdots+ \vec{c}_{p^{k} \mu_g} X_g^{p^k})
$
coincides with
$\vec{l}(X_1, \cdots, X_g)$ 
by Proposition \ref{thm:hassewitt}.
By our choice of
the Hermite basis, $\mathbf{e}^{(0)}$ is the identity matrix. 
Therefore, the logarithm  of $\hat{J}_X$ and $\vec{l}(X_1, \cdots, X_g)$ have 
the same Honda type with the same linear term. 
By \cite[Theorem 2]{Ho}, 
$\vec{l}(X_1, \cdots, X_g)$ 
is the logarithm of a formal group over $O_K$ that is 
strictly isomorphic to the formal group $\hat{J}_X$. 
\end{proof}

\subsection{Good trivialization with respect to a given automorphism}
\label{sect:good-triv-with-auto}
Assume now that we are given an automorphism
$\delta : X \to X$ over $K$ of order $d$
such that $\delta(\infty)=\infty$.
We also assume $d \geq 2g-1$ and $p \equiv 1 \mod d$.
(From Corollary \ref{cor:torsion3} onward, 
we will assume $d \geq 2g+1$.)
Note that this implies 
$p \geq 2g> \mu_g$ (see \eqref{eq:vecmu}),
and hence both \eqref{eq:s-preserved} and
\eqref{eq:2g-1leqd} are satisfied.
Let $\zeta_d  = (t/\delta^*(t))(\infty) \in K^*$
be the value of the rational function
$t/\delta^*(t)$ at $\infty$,
where $t$ is a uniformizer at $\infty$.
Note that $\zeta_d$ is independent of the choice of $t$.
It follows from the following proposition
that $\zeta_d$ is a primitive $d$-th root of unity.

\begin{proposition}
The isomorphism \eqref{eq:def-of-sn0}
can be chosen so that 
the following diagram is commutative:
\begin{equation}\label{eq:commutative-diag-with-delta}
\begin{matrix}
 \Spec K((\tt)) & \overset{N}{\to} & X \\
    \downarrow & & \downarrow_{\delta} \\
 \Spec K((\tt)) & \overset{N}{\to} & X,
\end{matrix}
\end{equation}
where the left vertical map 
is given by $\sum a_i T^i \mapsto \sum a_i (\zeta_d T)^i$.
\end{proposition}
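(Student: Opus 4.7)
The plan is to construct a generator of $\mathcal{P}_{\widetilde{\infty}}$ on which $\delta^{*}$ acts by multiplication by $\zeta_d^{-1}$, and then to define $\sN_{0}$ by sending $\tt$ to this generator.

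First we extend $\delta$ to an $O_{K}$-automorphism of $\sX$. Since $\sX$ is a smooth projective $O_{K}$-model of $X$ and $g \geq 2$, the smooth projective model is unique up to unique isomorphism (it coincides with the minimal regular model), so any $K$-automorphism of $X$ lifts canonically to $\sX$. Because $\delta$ fixes $\infty$, the extension fixes the schematic closure $\widetilde{\infty}$, and hence induces a continuous $O_{K}$-algebra automorphism $\delta^{*}$ of $\hat{\sO}_{\sX, \overline{\infty}}$ preserving $\mathcal{P}_{\widetilde{\infty}}$.

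Next we perform an averaging construction. Pick any generator $t_{0}$ of $\mathcal{P}_{\widetilde{\infty}}$; since $\delta$ stabilizes this prime, we may write $\delta^{*}(t_{0}) = v\, t_{0}$ with $v \in \hat{\sO}_{\sX, \overline{\infty}}^{\times}$. The restriction of $v$ to the section $\widetilde{\infty} \cong \Spec O_{K}$ takes a value in $O_{K}^{\times}$ whose image in $K$ is $\zeta_{d}^{-1}$, directly from the definition of $\zeta_{d}$. Now set
\[
 t := \sum_{i=0}^{d-1} \zeta_{d}^{i}\, (\delta^{*})^{i}(t_{0}).
\]
Using $(\delta^{*})^{d} = \mathrm{id}$ and the substitution $i \mapsto i+1$ at once yields $\delta^{*}(t) = \zeta_{d}^{-1}\, t$. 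Writing $(\delta^{*})^{i}(t_{0}) = V_{i}\, t_{0}$ with $V_{i} := \prod_{j=0}^{i-1} (\delta^{*})^{j}(v)$, and observing that $\delta$ fixes $\widetilde{\infty}$, we get $V_{i}(\widetilde{\infty}) = v(\widetilde{\infty})^{i} = \zeta_{d}^{-i}$, hence $t = U\, t_{0}$ with
\[
 U(\widetilde{\infty}) = \sum_{i=0}^{d-1} \zeta_{d}^{i}\, \zeta_{d}^{-i} = d.
\]
The hypothesis $p \equiv 1 \mod d$ forces $p \nmid d$, so $d \in O_{K}^{\times}$; therefore $U \in \hat{\sO}_{\sX, \overline{\infty}}^{\times}$ and $t$ is again a generator of $\mathcal{P}_{\widetilde{\infty}}$.

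Finally we define $\sN_{0}$ by $\tt \mapsto t$. The ring $\hat{\sO}_{\sX, \overline{\infty}}$ is a two-dimensional complete regular local $O_{K}$-algebra with residue field $\F$ and maximal ideal $(\pi_{K}, t)$; by the Cohen structure theorem, the unique continuous $O_{K}$-algebra homomorphism $O_{K}[[\tt]] \to \hat{\sO}_{\sX, \overline{\infty}}$ sending $\tt$ to $t$ is an isomorphism, and we take $\sN_{0}$ to be its inverse. Tautologically $\sN_{0}^{-1}((\tt)) = (t) = \mathcal{P}_{\widetilde{\infty}}$, so $\sN_{0}$ is an admissible choice in \eqref{eq:def-of-sn0}. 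Commutativity of the diagram reduces, by density and continuity, to checking the agreement of the two induced ring endomorphisms of $K[[\tt]]$ on the single topological generator $\tt$, where both routes return $\zeta_{d}^{-1} \tt$: one via $\delta^{*}(t) = \zeta_{d}^{-1}\, t$ from the averaging step, the other from the definition of the left vertical map. The main obstacle is the extension of $\delta$ in the first step, which relies on uniqueness of smooth projective models for curves of genus $\geq 2$; the averaging is then formal, and its success rests entirely on the arithmetic input $p \nmid d$ supplied by $p \equiv 1 \mod d$.
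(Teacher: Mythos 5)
Your proof is correct and takes essentially the same approach as the paper: both construct the new uniformizer by averaging over the cyclic group generated by $\delta$, with $p\equiv 1 \bmod d$ guaranteeing that the averaged factor is a unit (being congruent to $d$ modulo the maximal ideal). Your eigenspace projection $t=\sum_{i=0}^{d-1}\zeta_d^{\,i}(\delta^{*})^{i}(t_0)$ unwinds, after factoring the cocycle $v=\zeta_d^{-1}v_1$ into its root-of-unity and $1$-unit parts, to exactly the paper's Hilbert-90-style sum $u(T)=\sum_{i}\prod_{j\leq i}\delta^{j}(v_1(T))$; the only substantive difference is that you make explicit the extension of $\delta$ to $\sX$ via uniqueness of the smooth model, a step the paper leaves implicit.
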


\begin{proof}
We first take an arbitrary isomorphism \eqref{eq:def-of-sn0}.
Denote by the same letter $\delta$
the automorphism $O_K[[\tt]] \to O_K[[\tt]]$ 
induced by $\delta$ through \eqref{eq:def-of-sn0}.
It suffices to show that
there is a unit $u(T) \in O_K[[\tt]]^*$
such that $\delta(u(T) \cdot \tt) = \zeta_d^{-1} u(T) \cdot \tt$.

Since the ideal $(\tt)$ is preserved by $\delta$,
there exists $v(T) \in O_K[[\tt]]^*$ such that
$\delta(\tt) = v(T) \tt$.
Write $v(T) = \zeta v_1(T)$ with 
$\zeta \in O_K^*$ and $v_1(T) \in 1 + \tt O_K[[\tt]]$.
Since the order of $\delta$ is $d$,
we have 
\begin{equation}\label{eq:h90}
\prod_{i=0}^{d-1} \delta^i(v_1(T)) = 1
\qquad
\text{and}
\qquad \zeta^d=1.
\end{equation}
Define $u(T) = \sum_{i=0}^{d-1} \prod_{j=0}^i \delta^j(v_1(T))$.
Since $u(T) \equiv d \mod (\tt)$,
we have $u(T) \in O_K[[\tt]]^*$.
On the other hand, we have
$v_1(T) \delta(u(T)) = u(T)$ by \eqref{eq:h90}.
Therefore we get
$\delta(u(T) \cdot \tt) 
%= v_1(\tt)^{-1} u(\tt) \cdot \zeta v_1(\tt) \tt
 = \zeta u(T) \cdot \tt$.
Then we see $\zeta=\zeta_d^{-1}$ by the definition of $\zeta_d$.
We are done.
\end{proof}

From now on, we choose the isomorphism \eqref{eq:def-of-sn0}
in such a way that
the diagram \eqref{eq:commutative-diag-with-delta}
commutes.
By abuse of notation,
we denote by $\delta$
the isomorphism $K((\tt)) \to K((\tt))$
given by $\sum a_i T^i \mapsto \sum a_i (\zeta_d T)^i$.

\begin{remark}\label{rem:log-of-fgl-diag}
Suppose further that
the assumptions of Theorem \ref{thm:fgl} are satisfied.
Then
the formal power series $l_i(X)$ 
defined in \eqref{eq:log-fgl-diagonal2}
is the logarithm of a formal group $\hat{J}_{X, i}$ over $O_K$
for all $i =1, \ldots, g$.
Moreover, we have an isomorphism 
of formal groups over $O_K$
\begin{equation}\label{eq:decomp-of-fgl}
 \hat{J}_X \cong \bigoplus_{i=1}^g \hat{J}_{X, i}. 
\end{equation}
This follows immediately from
Theorem \ref{thm:fgl} and \eqref{eq:log-fgl-diagonal}.
\end{remark}

\subsection{Decomposition of torsion points}
We use the notation introduced in 
\eqref{eq:def-of-mu-part}.

\begin{proposition}\label{lem:jactorsion}
Let $n \in \Z_{>0}$ and $\mu \in \Z/d\Z$.
Suppose that $K$ contains all
$p^n$-torsion points on $\Jac(X)$, 
that is, $|\Jac(X)[p^n]|=p^{2ng}$.
\begin{enumerate}
\item
If $\mu \equiv \mu_i \mod d$ for some $i=1, \cdots, g$, 
then $\widehat{\Jac}(X)[p^n]_{\mu}$ 
is a cyclic group of order $p^n$.
Otherwise, $\widehat{\Jac}(X)[p^n]_{\mu}=0$.
\item
Set $M_{\pm} := \{ \pm \mu_i \mod d ~|~ i=1, \ldots, g \} \subset \Z/d\Z$
and define
\[
\rho := 
\left\{
\begin{array}{ll}
0 & \text{if $\mu \not\in M_+ \cup M_-$}
\\
2 & \text{if $\mu \in M_+ \cap M_-$}
\\
1 & \text{otherwise}.
\end{array}
\right.
\]
Then $\Jac(X)[p^n]_{\mu}$ 
is a free
$\Z/p^n\Z$-module of rank $\rho$.
\end{enumerate}
\end{proposition}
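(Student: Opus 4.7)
The plan is to prove part (1) by combining the explicit construction of $p^n$-torsion points provided by Proposition \ref{prop:const-cyc} with a dimension count, and then to deduce part (2) from part (1) via the Weil pairing on $\Jac(X)[p^n]$.

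For part (1), I begin by observing that, because $\mathbf{e}^{(k)}$ is diagonal (by \eqref{eq:det-of-diag-mat}) and $Y$ is ordinary, Proposition \ref{cor:hassewitt} forces $e_{ii}^{(k)} \in O_K^*$ for every $i, k$. After a harmless enlargement of $K$, Proposition \ref{prop:existance-of-p-tor-el2} then yields $|T_{n,i}| = p^n$ for each $i$. Combining the injection of Proposition \ref{prop:const-cyc} with the composition \eqref{eq:gamma-to-gr-mod-sim}--\eqref{eq:krichever-isom} and using \eqref{eq:trivialreductionaction} (to ensure the image lands in the kernel of specialization), I obtain an injection $T_{n,i} \hookrightarrow \widehat{\Jac}(X)[p^n]_{\mu_i}$, so $|\widehat{\Jac}(X)[p^n]_{\mu_i}| \geq p^n$. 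Since $\zeta_d \in K$, the $\delta$-action on $\widehat{\Jac}(X)[p^n]$ is semisimple and $\widehat{\Jac}(X)[p^n] = \bigoplus_{\mu \in \Z/d\Z} \widehat{\Jac}(X)[p^n]_\mu$; the integers $\mu_1, \ldots, \mu_g$ are pairwise distinct modulo $d$ because $\mu_g \leq 2g-1 < d$. The individual lower bounds therefore multiply to $p^{ng}$, which already exhausts $|\widehat{\Jac}(X)[p^n]| \leq p^{ng}$ coming from the $g$-dimensionality of $\hat{J}_X$ and the ordinariness of $Y$. Hence $|\widehat{\Jac}(X)[p^n]_{\mu_i}| = p^n$, and all other $\mu$-parts vanish. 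Cyclicity then follows by applying the same count at level $n-1$: the subgroup $\widehat{\Jac}(X)[p^{n-1}]_{\mu_i}$ has order $p^{n-1}$, strictly smaller than $p^n$, so there exists an element of exact order $p^n$ in the $\Z/p^n\Z$-module $\widehat{\Jac}(X)[p^n]_{\mu_i}$, forcing it to be cyclic.

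For part (2), I consider the short exact sequence
\[ 0 \to \widehat{\Jac}(X)[p^n] \to \Jac(X)[p^n] \to \Jac(Y)[p^n] \to 0, \]
which is right-exact thanks to the ordinariness of $Y$ and the hypothesis on $K$. Taking $\mu$-isotypic parts is exact since $d$ is invertible in $\Z_p$, so it suffices to compute $\Jac(Y)[p^n]_\mu$. To this end I invoke the Weil pairing $\Jac(X)[p^n] \times \Jac(X)[p^n] \to \mu_{p^n}$, which is perfect, $\delta$-invariant (because $\delta$ preserves the principal polarization induced by $\Theta$), and admits $\widehat{\Jac}(X)[p^n]$ as a Lagrangian subgroup---a standard property of the connected-\'etale decomposition of the $p$-divisible group of a principally polarized abelian variety over $O_K$ with ordinary reduction. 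This yields a $\delta$-equivariant perfect pairing $\Jac(Y)[p^n] \times \widehat{\Jac}(X)[p^n] \to \mu_{p^n}$; since $\delta$ acts trivially on $\mu_{p^n} \subset K^*$, the $\mu$-part of $\Jac(Y)[p^n]$ is canonically identified with $\Hom(\widehat{\Jac}(X)[p^n]_{-\mu}, \mu_{p^n})$. By part (1) this is cyclic of order $p^n$ when $\mu \in M_-$ and trivial otherwise. Assembling the orders gives $|\Jac(X)[p^n]_\mu| = p^{n\rho}$, and freeness of rank $\rho$ is then automatic: $\Jac(X)[p^n]$ is a free $\Z/p^n\Z$-module of rank $2g$ by the hypothesis on $K$, and the $\mu$-isotypic component is a direct summand of a free module, hence itself free of the rank dictated by its order.

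The principal technical obstacle is the Lagrangian property of $\widehat{\Jac}(X)[p^n]$ under the Weil pairing. Although standard, it rests on the self-duality of $\Jac(X)$ together with the Cartier duality between the formal (multiplicative) and \'etale parts of $J_X[p^\infty]$ over $O_K$ in the ordinary case, and will need to be either invoked carefully from the literature or verified directly.
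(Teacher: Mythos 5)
Your proof is correct in substance, but it takes a genuinely different route from the paper's. For part (1) the paper argues entirely ``externally'' to the soliton machinery: Tate's theorem identifies $T_p\hat{J}_X\otimes_{\Z_p}\C_p$ with $\Hom_K(H^0(X,\Omega^1_{X/K}),\C_p)$, and Lewittes's computation of the character of $\delta$ on $H^0(X,\Omega^1_{X/K})$ then gives $\det(t\cdot\id-\delta\mid T_p\hat{J}_X)=\prod_{i=1}^g(t-\zeta_d^{\mu_i})$; since the $\mu_i$ are pairwise distinct mod $d$, each eigenspace of $T_p\hat{J}_X$ is free of rank one over $\Z_p$ and (1) follows by reduction mod $p^n$. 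You instead manufacture the torsion points with the machinery of \S\ref{sect:pn-tor-pt} (Propositions \ref{prop:const-cyc} and \ref{prop:existance-of-p-tor-el2}) and close the count against $|\widehat{\Jac}(X)[p^n]|\leq p^{ng}$; this is valid, avoids Tate's and Lewittes's theorems, and as a by-product establishes the surjectivity assertion of Proposition \ref{cor:torsion2} (which the paper instead deduces \emph{from} the present proposition --- your order of deduction is reversed but not circular, since the injectivity of \eqref{eq:map-tn-to-jac-tor2} rests only on Proposition \ref{prop:inj}). For part (2) the paper's ``standard consequence'' amounts to reading off the eigenvalues $\zeta_d^{\pm\mu_i}$ of $\delta$ on the full $T_pJ_X$ from $H^0(\Omega^1)\oplus H^1(\sO)$ and Serre duality, whereas you obtain the same multiplicities from the Weil pairing together with the Lagrangian property of the connected part; these are duality arguments of comparable weight, and the Lagrangian fact you flag is indeed standard for good ordinary reduction. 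One caveat: your argument for (1) explicitly invokes ordinarity of $Y$ (to get $e_{ii}^{(k)}\in O_K^*$ and the bound $|\widehat{\Jac}(X)[p^n]|\leq p^{ng}$), which is not among the stated hypotheses; however, the proposition is false without it (summing (1) over $\mu$ already forces the $p$-rank of $Y$ to be $g$), the paper's appeal to Tate's theorem carries the same implicit assumption, and ordinarity holds in every application, so this is a defect of the statement rather than of your proof. A minor point: the standing hypothesis is $d\geq 2g-1$, not $d>2g-1$, but the distinctness of the $\mu_i$ modulo $d$ still holds since they are distinct integers in $[1,d]$.
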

\begin{proof}
Let $T_p\hat{J}_X$ be the Tate module of 
the formal group $\hat{J}_X/O_K$
associated to the Jacobian variety of $X$.
By Tate's theorem \cite{Tate}, we have
\[ T_p\hat{J}_X \otimes_{\Z_p} \C_p \cong 
\Hom_K(H^0(X, \Omega_{X/K}^1), \C_p),
\]
where $\C_p$ is the completion of an algebraic closure of $K$.
On the other hand, it is shown in 
\cite[Theorem 5]{Lewittes}
that
\[ \det(t \cdot \id -\delta | H^0(X, 
  \Omega_{X/K}^1))
= \prod_{i=1}^g (t-\zeta_d^{-\mu_i}).
\]
Combining two results,
we get
$\det(t \cdot \id - \delta | T_p\hat{J}_X)
= \prod_{i=1}^g (t-\zeta_d^{\mu_i})$.
Recall that we assumed $2g-1 \leq d$ so that
$\mu_i \not\equiv \mu_j \mod d$ for any $1 \leq i < j \leq g$.
Now (1) follows
from the isomorphisms
$\widehat{\Jac}(X)[p^n]
\cong T_p\hat{J}_X/p^nT_p\hat{J}_X$.
(2) is a standard consequence of (1).
\end{proof}

\subsection{$\mu$-part of the torsion points}

For $n \in \Z_{>0}$ and $i \in \{ 1, \ldots, g \}$,
the composition of \eqref{eq:map-tn-to-gamma-a-cyclic},
\eqref{eq:gamma-to-gr-mod-sim} and \eqref{eq:krichever-isom}
defines an injective map
\begin{equation}\label{eq:map-tn-to-jac-tor2}
T_{n, i} \to \widehat{\Jac}(X)[p^n]_{\mu_i} 
\qquad
  \pi \mapsto [h(T^{\mu_i}; \pi)A]_A.
\end{equation}
(See \eqref{eq:oneparameterloop} for the definition of 
$h(T; \pi)$.)

\begin{proposition}\label{cor:torsion2}
Let $n \in \Z_{>0}$ and $i \in \{ 1, \ldots, g \}$.
Suppose that $e_{ii}^{(1)} \in O_K^*$.
Then there exists a finite extension $K'$ of $K$
such that,
upon replacing $K$ by $K'$,
we have $|T_{n, i}| = p^n$ and
\eqref{eq:map-tn-to-jac-tor2}
is bijective.
\end{proposition}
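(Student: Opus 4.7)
The plan is to verify that, after a suitable base extension, both the source and the target of \eqref{eq:map-tn-to-jac-tor2} have cardinality exactly $p^n$; since the map is already known to be injective, this will force bijectivity.

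The heart of the matter will be to upgrade the hypothesis $e_{ii}^{(1)} \in O_K^*$ to the statement that $e_{ii}^{(k)} \in O_K^*$ for every $k \in \Z_{\geq 0}$. The case $k=0$ is immediate from the definition \eqref{eq:decofpower}, which forces $\mathbf{e}^{(0)}$ to be the identity matrix. For general $k$, assumption \eqref{eq:s-preserved} combined with \eqref{eq:2g-1leqd} ensures that each $\mathbf{e}^{(k)}$ is diagonal, as already observed around \eqref{eq:det-of-diag-mat}. Reducing modulo $\pi_K$, Proposition \ref{thm:hassewitt} identifies $\bar{\mathbf{e}}^{(k)}$ with $(\bar{c}_{i, p^k \mu_j}) \in M_g(\F)$, and the Hasse-Witt composition identity \eqref{eq:hw-mat-prod}, read on the diagonal, yields
\[
\bar{e}_{ii}^{(k)} \;=\; \prod_{j=0}^{k-1} \bigl(\bar{e}_{ii}^{(1)}\bigr)^{p^j},
\]
which is a nonzero element of $\F$ since $\bar{e}_{ii}^{(1)} \neq 0$. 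Hence $e_{ii}^{(k)} \in O_K^*$ for all $k$, which is exactly the hypothesis required to invoke Proposition \ref{prop:existance-of-p-tor-el2}.

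With that in hand, Proposition \ref{prop:existance-of-p-tor-el2} produces, after replacing $K$ by a suitable finite extension, the equality $|T_{n,i}| = p^n$. Enlarging $K$ further if necessary so that it contains all $p^n$-torsion points of $\Jac(X)$, Proposition \ref{lem:jactorsion}(1) gives $|\widehat{\Jac}(X)[p^n]_{\mu_i}| = p^n$ (noting that $\mu_i \bmod d$ trivially lies in the index set $\{\mu_1, \dots, \mu_g\} \bmod d$). The map \eqref{eq:map-tn-to-jac-tor2} is injective by construction, being the composite of the injections \eqref{eq:map-tn-to-gamma-a-cyclic}, \eqref{eq:gamma-to-gr-mod-sim} and \eqref{eq:krichever-isom}; comparing the equal finite cardinalities then forces bijectivity. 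The only step requiring genuine argument is the propagation of unit-ness from $k=1$ to arbitrary $k$ via the cyclic-symmetry-induced diagonality together with the Hasse-Witt identity; everything else is a direct appeal to results already established.
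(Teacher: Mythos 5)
Your proposal is correct and follows essentially the same route as the paper's proof: diagonality of $\mathbf{e}^{(k)}$ plus Proposition \ref{thm:hassewitt} and \eqref{eq:hw-mat-prod} to propagate $e_{ii}^{(1)}\in O_K^*$ to all $k$, then Proposition \ref{prop:existance-of-p-tor-el2} for $|T_{n,i}|=p^n$, Proposition \ref{lem:jactorsion}(1) to bound the target, and injectivity of \eqref{eq:map-tn-to-jac-tor2} to conclude. The only cosmetic difference is that you enlarge $K$ to get the target's cardinality exactly $p^n$, whereas the paper only needs the bound $\leq p^n$; both are fine.
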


\begin{proof}
Recall that 
the matrix $\mathbf{e}^{(k)}=(e_{ij}^{(k)})$ 
introduced in \eqref{eq:decofpower} is diagonal
(see the remark before \eqref{eq:det-of-diag-mat}).
From Proposition \ref{thm:hassewitt} and \eqref{eq:hw-mat-prod},
we have $e_{ii}^{(k)} \in O_K^*$ for all $k$.
By Proposition \ref{prop:existance-of-p-tor-el2}
it holds that $|T_{n, i}|=p^n$ if we replace $K$ by
its finite extension.
On the other hand, we have $|\widehat{\Jac}(X)[p^n]_{\mu_i}| \leq p^n$ 
by Proposition \ref{lem:jactorsion} (1).
Since \eqref{eq:map-tn-to-jac-tor2} is injective,
it is surjective as well.
\end{proof}

\begin{corollary}\label{cor:torsion3}
Suppose $d \geq 2g+1$ and $e_{11}^{(1)} \in O_K^*$.
Let $n \in \Z_{>0}$.
Then there exists a finite extension $K'$ of $K$
such that,
upon replacing $K$ by $K'$,
we have $|T_{n, 1}| = p^n$ and
we have a bijection
\begin{equation}\label{eq:map-tn-to-jac-tor3}
T_{n, 1} \to \Jac(X)[p^n]_1 
\qquad
  \pi \mapsto [h(T; \pi)A]_A.
\end{equation}
\end{corollary}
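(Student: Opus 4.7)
The plan is to deduce this corollary directly from Proposition \ref{cor:torsion2} applied with $i=1$, by identifying $\widehat{\Jac}(X)[p^n]_1$ with $\Jac(X)[p^n]_1$ under the strengthened hypothesis $d \geq 2g+1$.

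First I would observe that $\mu_1 = 1$ by \eqref{eq:vecmu}, and the hypothesis $e_{11}^{(1)} \in O_K^*$ is precisely what is needed to apply Proposition \ref{cor:torsion2} with $i=1$. After a preliminary finite extension of $K$, this yields $|T_{n,1}|=p^n$ and a bijection
\[
T_{n,1} \longrightarrow \widehat{\Jac}(X)[p^n]_1, \qquad \pi \mapsto [h(T;\pi)A]_A.
\]
The remaining task is to show that, possibly after a further finite extension, the natural inclusion $\widehat{\Jac}(X)[p^n]_1 \hookrightarrow \Jac(X)[p^n]_1$ is an equality. This inclusion exists because the specialization map \eqref{eq:pic-reduction} preserves degree, so $\widehat{\Jac}(X) \subset \Jac(X)$, and it is $\delta$-equivariant.

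Next I would enlarge $K$ so that all $p^n$-torsion of $\Jac(X)$ becomes $K$-rational, making Proposition \ref{lem:jactorsion} applicable. Part (1) of that proposition, applied with $\mu = 1 = \mu_1$, gives that $\widehat{\Jac}(X)[p^n]_1$ is cyclic of order $p^n$. For the decisive comparison I would apply part (2) with $\mu = 1$: clearly $1 = \mu_1 \in M_+$, and $1 \in M_-$ if and only if $\mu_i \equiv -1 \pmod{d}$ for some $i$. The bound $\mu_i \leq 2g-1$ from \eqref{eq:vecmu} combined with the hypothesis $d \geq 2g+1$ gives $\mu_i \leq d-2$, so $\mu_i \not\equiv -1 \pmod{d}$. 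Hence $1 \notin M_-$, which forces $\rho = 1$, and therefore $\Jac(X)[p^n]_1$ is also cyclic of order $p^n$.

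Since the inclusion $\widehat{\Jac}(X)[p^n]_1 \hookrightarrow \Jac(X)[p^n]_1$ is then an injection between cyclic groups of the same finite order $p^n$, it must be a bijection; composing with the bijection from Proposition \ref{cor:torsion2} gives the required bijection \eqref{eq:map-tn-to-jac-tor3}. There is no substantive obstacle here: the whole point of the strengthened hypothesis $d \geq 2g+1$ (as opposed to $d \geq 2g-1$ which was sufficient for the earlier results) is precisely to exclude the pathological congruence $\mu_i \equiv -1 \pmod{d}$, which would make $\rho = 2$ and force $\Jac(X)[p^n]_1$ to be strictly larger than its subgroup $\widehat{\Jac}(X)[p^n]_1$.
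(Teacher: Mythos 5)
Your proposal is correct and follows essentially the same route as the paper: apply Proposition \ref{cor:torsion2} with $i=1$, then use Proposition \ref{lem:jactorsion} together with $\mu_g \leq 2g-1 \leq d-2$ to see that $1=\mu_1 \in M_+\setminus M_-$, so that $\Jac(X)[p^n]_1 = \widehat{\Jac}(X)[p^n]_1$. The paper's proof is just a terser version of your argument, and your explanation of why $d\geq 2g+1$ (rather than $d\geq 2g-1$) is the relevant hypothesis matches its intent.
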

\begin{proof}
Recall that we have 
$1=\mu_1 < \cdots < \mu_g \leq 2g-1$
(see \eqref{eq:vecmu}).
Hence the assumption $d \geq 2g+1$ implies
that $1=\mu_1 \in M_+ \setminus M_-$
in Proposition \ref{lem:jactorsion} (2).
It follows that
$\Jac(X)[p^n]_1=\widehat{\Jac}(X)[p^n]_1$.
The proposition applied to $i=1$ completes the proof.
\end{proof}

\begin{theorem}\label{thm:stronger-main}
Let $D$ be a divisor on $X$
such that 
$\delta^*(D)=D, ~\deg D = 0$
and 
$\Supp(D) \cap D_*= \emptyset$
(see \eqref{eq:ustar}).
Let $(\sL, \sigma) := 
(\sO_X(D), \sigma(D))$ be
the Krichever pair constructed in \S \ref{ex:kripair}.
Suppose 
$WG_{\infty}(X) = WG_{\overline{\infty}}(Y),$ 
$WG_{\infty}(\sL) = WG_{\overline{\infty}}(\bar{\sL}),$ 
$e_{11}^{(1)} \in O_K^*, ~
p \equiv 1 \mod d$ and $d \geq 2g+1$.
Then we have for any $n \in \Z_{>0}$
\[ \Jac(X)[p^n]_1  \cap (\Theta - \sL) \subset \{ 0\}. 
\]
\end{theorem}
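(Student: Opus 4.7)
The plan is to imitate the proof of Theorem \ref{thm:main2}, replacing the coordinate algebra $A$ by the Krichever space $V_\sL := V(\sL, \sigma)$ of the translating divisor. Suppose $\mathcal{P} \in \Jac(X)[p^n]_1 \cap (\Theta - \sL)$. Since the statement is unchanged by base change of $K$, Corollary \ref{cor:torsion3} applies after a suitable finite extension and produces a unique $\pi \in T_{n,1}$ with $\mathcal{P} = [h(T;\pi) A]_A$. The goal is to show $\pi = 0$.

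First I would verify that $V_\sL$ is strictly integral as an element of $\Gr_A^{\alg}(K)$. The hypotheses $\Supp(D) \cap D_* = \emptyset$ and $WG_\infty(\sL) = WG_{\overline\infty}(\bar\sL)$ (equivalently $\M(\sL) = \M(\bar\sL)$) are exactly what Proposition \ref{prop:trivialization}(2) requires. Since $V_\sL$ is an $A$-module we have $A \cdot V_\sL = V_\sL$, so by the multiplicativity of the Krichever correspondence (Theorem \ref{thm:krichever}(1)),
\[
\mathcal{P} + \sL
= [h(T;\pi) A \cdot V_\sL]_A
= [h(T;\pi) V_\sL]_A \qquad \text{in} \quad \Pic(X).
\]
As $\deg(\mathcal{P}+\sL) = 0$, Theorem \ref{thm:krichever}(2b)--(2d) translates the assumption $\mathcal{P} + \sL \in \Theta$ into the non-vanishing of the intersection of $h(T;\pi) V_\sL$ with $T^{g-1} K[[\tt]]$.

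Next I would apply Proposition \ref{prop:nonvanishing}(1) to $W := V_\sL^{\an} \in \Gr^{\an}(K)$, the loop $h(T;\pi)$, and the partition $\kappa := \kappa(V_\sL) = \kappa(\sL)$. Two hypotheses must be checked: $V_\sL$ is strictly integral (done above), and $\kappa_1 + l(\kappa) \leq p$. The latter follows from Clifford's theorem \eqref{eq:kappa-one-plus-length}, which gives $\kappa_1 + l(\kappa) \leq 2g$, combined with the fact that $p \equiv 1 \mod d$ together with $d \geq 2g+1$ forces $p \geq d+1 \geq 2g+2 > 2g$. If $\pi \neq 0$, then Proposition \ref{prop:nonvanishing}(1) yields $\tau_{V_\sL^{\an}}(h(T;\pi)) \neq 0$, which via Theorem \ref{thm:tau}(1) contradicts $\mathcal{P} + \sL \in \Theta$. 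Hence $\pi = 0$ and $\mathcal{P} = 0$.

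The main obstacle is the shift from the trivial line bundle, where $\kappa(A) = (g, 0, \ldots)$ is explicit (as in the proof of Theorem \ref{thm:main2}), to an arbitrary translating $\sL$: one needs a workable a priori bound on $\kappa(V_\sL) = \kappa(\sL)$ so that the Schur-function estimate underlying Proposition \ref{prop:nonvanishing}(1) can be invoked. Clifford's theorem supplies exactly such a bound, and together with $d \geq 2g+1$ the resulting inequality $\kappa_1 + l(\kappa) \leq p$ holds with room to spare. The extra hypothesis $\delta^*(D) = D$ does not enter the non-vanishing estimate itself, but it ensures that $\Theta - \sL$ is $\delta$-stable, hence that the intersection with the $\zeta_d$-eigenspace $\Jac(X)[p^n]_1$ is a natural object to study.
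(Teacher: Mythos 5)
Your proposal is correct and follows essentially the same route as the paper's own proof: Proposition \ref{prop:trivialization} for the strict integrality of $V(\sL,\sigma)$, Corollary \ref{cor:torsion3} to parametrize $\Jac(X)[p^n]_1$ by $T_{n,1}$, the Clifford bound \eqref{eq:kappa-one-plus-length} combined with $2g \leq d-1 < p$ to verify the hypothesis of Proposition \ref{prop:nonvanishing}(1), and Theorem \ref{thm:krichever}(2d) to conclude. The only difference is presentational (you make explicit the identity $[h(T;\pi)A\cdot V_\sL]_A = [h(T;\pi)V_\sL]_A$, which the paper leaves implicit).
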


\begin{proof}
By Proposition \ref{prop:trivialization},
$V:=V(\sL, \sigma) \in \Gr_A^{\alg}(K)$ is strictly integral.
By Proposition \ref{prop:existance-of-p-tor-el2},
we may assume $|T_{n, 1}|=p^n$.
Let $\pi \not= 0$ be an element of 
\eqref{eq:set-of-exact-order-pk} 
for some $s \in \{1, \ldots, n\}$
and set $h(T):=h(T; \pi)$.
In view of Corollary \ref{cor:torsion3},
it suffices to show
$[h(T)V]_A \not\in \Theta$.

From \eqref{eq:kappa-one-plus-length}
we see $\kappa_1(V) + l(\kappa(V)) \leq 2g \leq d-1 < p$.
By Proposition \ref{prop:nonvanishing},
we get $h(T)V \cap T^{g-1} K[[\tt]] = 0.$
Now Theorem \ref{thm:krichever} (2d) shows
$[h(T)V]_A \not\in \Theta$.
This completes the proof of Theorem \ref{thm:stronger-main}.
\end{proof}

\subsection{Proof of Theorem \ref{thm:main3}}
From \eqref{eq:det-of-diag-mat} and
Proposition \ref{cor:hassewitt},
we have $e_{11}^{(1)} \in O_K^*$
if $Y$ is ordinary.
Thus Theorem \ref{thm:main3} follows 
from Theorem \ref{thm:stronger-main}
applied to $D=0$.
\qed

\section{Examples}

\subsection{Cyclic quotient of a Fermat curve}\label{sect:fermat}

In this subsection, 
we work under the assumption and notations in
Thm. \ref{thm:main1}. We shall prove the following result,
which implies Thm. \ref{thm:main1} 
as a special case $\sL=0$.

\begin{theorem}\label{thm:main1+}
Let $\sL \in \Jac(X)$ be such that $\delta^*(\sL)=\sL$.
Then, for any  $n \in \Z_{>0}$,
we have 
\[ \Jac(X)[p^n]_1 \cap (\Theta - \sL) \subset \{ 0 \}.\]
\end{theorem}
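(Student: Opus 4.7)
The plan is to deduce Theorem \ref{thm:main1+} from Theorem \ref{thm:stronger-main} by verifying its hypotheses for the Fermat quotient $X$ of \eqref{eq:curve}, the automorphism $\delta(x,y) = (x, \zeta_d y)$, and a carefully chosen $\delta$-invariant divisor $D$ representing $\sL$. Note that $d = 2g+1$ and $p \equiv 1 \pmod d$ are built into the setup.

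The curve-level hypotheses are handled as follows. The equation \eqref{eq:curve} defines a smooth projective model $\sX/O_K$ because $p \nmid d$, and its special fibre $Y$ is again a cyclic quotient of a Fermat curve over $\F$. Ordinarity of $Y$ when $p \equiv 1 \pmod d$ is classical (Hasse--Witt computation via Gauss sums); since $d \geq \mu_g$ forces the matrix $\mathbf{e}^{(k)}$ to be diagonal (as observed before \eqref{eq:det-of-diag-mat}), ordinarity yields $e_{ii}^{(k)} \in O_K^*$ for all $i, k$, in particular $e_{11}^{(1)} \in O_K^*$. The equality $WG_{\infty}(X) = WG_{\overline{\infty}}(Y)$ follows because $H^0(X, \sO_X(n\infty))$ is spanned by integral monomials $x^i y^j$ whose reductions span $H^0(Y, \sO_Y(n\overline{\infty}))$.

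Next I would construct $D$. The quotient $X/\langle\delta\rangle$ is $\mathbb{P}^1$ with $K(X)^{\langle\delta\rangle} = K(x)$, so $K(X)/K(x)$ is cyclic Galois of order $d$. Hilbert 90 gives $H^1(\langle\delta\rangle, K(X)^*) = 0$, hence the natural map $\Div(X)^\delta \to \Pic(X)^\delta$ is surjective; pick $D_0 \in \Div(X)^\delta$ with $[D_0] = \sL$, which automatically has $\deg D_0 = 0$. To achieve $\Supp(D) \cap D_* = \emptyset$, translate $D_0$ by the principal divisor of a suitable function $f \in K(x)^*$, whose divisor is automatically $\delta$-invariant; this is possible since $D_*$ is a finite set of closed points.

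The remaining hypothesis $WG_{\infty}(\sL) = WG_{\overline{\infty}}(\bar{\sL})$ is the main obstacle. Proposition \ref{lem:jactorsion} applied with $\mu = 0$ shows, using $1 \leq \mu_i \leq 2g-1 = d-2$ (so $0 \notin M_+ \cup M_-$ and $\rho = 0$), that $\widehat{\Jac}(X)[p^n]_0 = 0$; hence $\Jac(X)^\delta$ has trivial $p$-primary part, and $\sL$ is a torsion class of order $m$ coprime to $p$, so the specialization map is bijective on the cyclic subgroup it generates. To upgrade this to the gap-sequence equality, I would refine Proposition \ref{prop:trivialization} in the $\delta$-equivariant setting: lift $\delta$ to an action on $\sL$ (possible since $\zeta_d \in K$ and the lifting obstruction has order dividing $d$, hence coprime to $p$), decompose $H^0(X, \sL(n\infty))$ and $H^0(Y, \bar\sL(n\overline{\infty}))$ into $\delta$- and $\bar\delta$-isotypic components, and show that each character component has a rigid dimension preserved under specialization; the rigidity reduces to a Riemann--Roch computation on $X/\langle\delta\rangle = \mathbb{P}^1$, which is insensitive to the reduction. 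Granted this, Theorem \ref{thm:stronger-main} applies and yields $\Jac(X)[p^n]_1 \cap (\Theta - \sL) \subset \{0\}$, as required.
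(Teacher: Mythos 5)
Your overall architecture coincides with the paper's: both proofs reduce Theorem \ref{thm:main1+} to Theorem \ref{thm:stronger-main}, and your treatment of the curve-level hypotheses (ordinarity of $Y$, the equality $WG_{\infty}(X)=WG_{\overline{\infty}}(Y)$, and $e_{11}^{(1)}\in O_K^*$) matches what the paper does. The gaps are in the two $\sL$-specific steps. For the construction of $D$: the paper needs no moving argument, because it invokes the explicit classification (Anderson, \S 4.2) that $\delta^*\sL=\sL$ forces $\sL\cong\sO_X(j(P_0-P_1))$ for some $j\in\{0,\dots,d-1\}$, where $P_i$ is the point with $x=i$, $y=0$; these points are $\delta$-fixed and visibly outside $D_*$. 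Your route has two concrete defects. First, $D_*$ is \emph{not} a finite set of closed points --- it is the punctured residue disk of $\overline{\infty}$, i.e.\ all closed points of $X$ specializing to $\overline{\infty}$ other than $\infty$ --- so ``translate off a finite set'' is not a valid justification. Second, Hilbert 90 alone does not give surjectivity of $\Div(X)^{\delta}\to\Pic(X)^{\delta}$: the obstruction lies in $H^1(\langle\delta\rangle, K(X)^*/K^*)$, which by Hilbert 90 injects into $H^2(\langle\delta\rangle,K^*)\cong K^*/(K^*)^d\neq 0$, so an additional argument (e.g.\ exploiting the $\delta$-fixed rational points) is required.

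More seriously, the hypothesis $WG_{\infty}(\sL)=WG_{\overline{\infty}}(\bar{\sL})$ --- which you correctly single out as the crux --- is not proved in your proposal but deferred (``I would refine\dots'', ``Granted this\dots''). Your proposed mechanism, decomposing $H^0(X,\sL(n\infty))$ into $\delta$-isotypic pieces and computing each piece by Riemann--Roch on $X/\langle\delta\rangle\cong\mathbb{P}^1$, is indeed the idea underlying the paper's argument; the paper carries it out and obtains the closed formula
\[ WG_{\infty}(\sL) = WG_{\overline{\infty}}(\bar{\sL})
= \Bigl\{\, i \in \{0,\dots,d-1\} \,\Bigm|\,
\bigl\langle \tfrac{ia+j}{d}\bigr\rangle
+\bigl\langle \tfrac{i(d+1-a)-j}{d}\bigr\rangle
-\bigl\langle \tfrac{i}{d}\bigr\rangle = 1 \,\Bigr\}, \]
which is insensitive to the characteristic and therefore yields the required equality in one stroke. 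Also note that your observation via Proposition \ref{lem:jactorsion} that $\sL$ has order prime to $p$ (so specialization is injective on $\langle\sL\rangle$) does not by itself control the gap sequence, as you acknowledge. In short: the plan is the right one and is likely salvageable, but as written the decisive verification is an announcement rather than an argument, and the divisor construction contains two errors.
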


\begin{remark}\label{rem:main1+}
Anderson \cite{Anderson} proved Thm. \ref{thm:main1+} for $n=1$.
\end{remark}

\begin{proof}
It follows from \cite[Proposition 5.1]{Gonzalez} that
the assumption $p \equiv 1 \mod d$ implies 
that $Y$ is ordinary.
(See also Remark \ref{rem:fermatordinary} below.)
For $i=0, 1$,
let $P_i$ be the closed points on $X$
characterized by $x(P_i)=i, ~y(P_i)=0$,
so that we have $\delta(P_i)=P_i$.

The assumption implies that
there is a $j \in \{ 0, 1, \cdots, d-1 \}$
such that
$\sL \cong \sO_X(j(P_0 - P_1))$ 
(cf. \cite[{\S 4.2}]{Anderson}).
An elementary computation shows
\[ WG_{\infty}(\sL) = WG_{\overline{\infty}}(\bar{\sL}) 
= \{ i \in \{ 0, \ldots, d-1 \} ~|~
\langle \frac{ia+j}{d} \rangle 
+\langle \frac{i(d+1-a)-j}{d} \rangle 
-\langle \frac{i}{d} \rangle = 1 \}
\]
where $\langle \cdot \rangle$ is the fractional part function.
Hence all the assumptions of
Theorem \ref{thm:stronger-main} are satisfied,
and Theorem \ref{thm:main1+} follows.
\end{proof}

In \cite{Anderson},
Anderson computed the decomposition
\eqref{eq:decofpower} explicitly for $k=1$.
His computation can easily be generalized to $k>1$.
Combined with Theorem \ref{thm:fgl},
this result gives rise to an explicit
formula for the formal group of 
the Jacobian variety of $X$.
In the rest of this subsection,
we work out such a formula.

Set $b := d+1-a$.
There exists a unique $u(T) \in 1+\tt \Z[[ T ]]$ such that
\[ u(T)^{b-1} = (u(T)-\frac{1}{T^d})^b. \]
We define 
$x(T) := T^d u(T), ~y(T) := T^{d+1}u(T) = T x(T) \in \Z[[\tt]][T]$
so that 
$x(T) \equiv T^d \mod T^{d-1} \Z[[\tt]]$
and
$y(T) \equiv T^{d+1} \mod T^{d} \Z[[\tt]]$.
One checks $y(T)^d = x(T)^a(x(T)-1)^b$.
%Indeed, we have 
%\[ x^a(x-1)^b = T^{la} u^a (T^l u-1)^b = T^{l(a+b)} u^a(u-\frac{1}{T^l})^b
% = T^{l(l+1)} u^{l} = y^l.\]
We get an embedding of $\Q_p$-algebras
\begin{equation}\label{eq:defofa}
 H^0(X \setminus \{ \infty \}, \sO_X) 
   \hookrightarrow \Q_p((\tt))
\end{equation}
which sends $x$ and $y$ to $x(T)$ and $y(T)$
respectively.
We define the trivialization $N_0$ 
to be the one induced by \eqref{eq:defofa}.

Let $k \in \Z_{\geq 0}$ and write $p^k-1 = dm ~(m \in \Z_{\geq 0})$.
We have
\[ T^{p^k-1}  
= (\frac{y(T)^d}{x(T)^d})^{m} = (\frac{x(T)^a(x(T)-1)^b}{x(T)^d})^m
= x(T)^m(1-\frac{1}{x(T)})^{bm}.
\]
It follows that
\[ T^{p^k} 
= \sum_{i=m-bm}^{m}
\gamma_i^{(k)} x(T)^i T,
\qquad
\gamma_i^{(k)} = 
(-1)^{i-m}
\binom{m b}{m-i} \in \Z.
\]
(Here we define $\binom{0}{0}=1$ by convention.)
If $i>0$, then 
$x(T)^i T = x(T)^{i-1} y(T) \in O(A).$
If $i<0$, then
$x(T)^i T \in \tt \Z_p[[\tt]]$.
This shows that
\begin{equation}\label{eq:explicitdecomp}
 b_1^{(k)}(T) = \sum_{i \not= 0} \gamma_i^{(k)} x(T)^i T,
\qquad
 e_{1, 1}^{(k)} = \gamma_0^{(k)} 
= \binom{\frac{p^k-1}{d} b}{\frac{p^k-1}{d}}
\end{equation}
in \eqref{eq:decofpower}.
Recall that we have a decomposition 
$\hat{J}_X \cong \oplus_{i=1}^g \hat{J}_{X, i}$
of the formal group $\hat{J}_X/\Z_p$ 
associated to the Jacobian variety of $X$
(see \eqref{eq:decomp-of-fgl}).
By Remark \ref{rem:log-of-fgl-diag},
we get the following result
(compare \cite{Ka}).

\begin{proposition}\label{prop:fermatformalgroup}
The formal power series
\[ l_1(X) = 
\sum_{k=0}^{\infty} 
\frac{1}{p^k} \binom{\frac{p^k-1}{d} b}{\frac{p^k-1}{d}} X^{p^k}
\]
is the logarithm function of 
a formal group over $\Z_p$, which is isomorphic  to $\hat{J}_{X, 1}/\Z_p$.
\end{proposition}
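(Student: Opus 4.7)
\textbf{Plan for Proposition \ref{prop:fermatformalgroup}.}

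The strategy is to reduce the claim to a direct application of Theorem \ref{thm:fgl} and Remark \ref{rem:log-of-fgl-diag}, once we verify their hypotheses in the present setting and read off the scalar $e_{11}^{(k)}$ from the explicit expansion already established in \eqref{eq:explicitdecomp}. Because the automorphism $\delta$ has order $d=2g+1$, the setup of \S \ref{sect:good-triv-with-auto} applies: assumptions \eqref{eq:s-preserved} and \eqref{eq:2g-1leqd} are satisfied, so the matrix $\mathbf{e}^{(k)}$ of \eqref{eq:decofpower} is diagonal and the vector logarithm $\vec{l}$ splits componentwise as in \eqref{eq:log-fgl-diagonal}. Consequently, proving the proposition amounts to checking that the general machinery outputs the asserted formula for the first coordinate $l_1(X)$.

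Next I would verify the three hypotheses of Theorem \ref{thm:fgl} in the situation of the proposition. The base field is $\Q_p$, so the absolute unramifiedness holds trivially. The inequality $p \geq 2g$ follows because $d \geq 3$ is an odd prime with $p \equiv 1 \pmod d$, so that $p \geq d+1 = 2g+2 > 2g$. The identity $WG_{\infty}(X) = WG_{\overline{\infty}}(Y)$ is precisely the $\sL = \sO_X$ special case established in the proof of Theorem \ref{thm:main1+}, which in turn uses Gonzalez's result \cite[Proposition 5.1]{Gonzalez} that $Y$ is ordinary whenever $p \equiv 1 \pmod d$. With these hypotheses in hand, Theorem \ref{thm:fgl} gives that $\vec{l}$ is the logarithm of a formal group over $\Z_p$ that is (strictly) isomorphic to $\hat{J}_X$, and Remark \ref{rem:log-of-fgl-diag} upgrades this to an identification of each component $l_i(X)$ as the logarithm of the summand $\hat{J}_{X,i}$.

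It remains only to compute $e_{11}^{(k)}$ explicitly and substitute into \eqref{eq:log-fgl-diagonal2}. This step has already been carried out in the excerpt: from the identity $T^{p^k-1} = x(T)^{m}(1 - 1/x(T))^{bm}$ with $m = (p^k-1)/d$ one obtains the decomposition \eqref{eq:explicitdecomp}, and in particular $e_{11}^{(k)} = \gamma_0^{(k)} = \binom{mb}{m}$. Plugging this into \eqref{eq:log-fgl-diagonal2} yields
\[
 l_1(X) = \sum_{k=0}^{\infty} \frac{1}{p^k} \binom{\tfrac{p^k-1}{d} b}{\tfrac{p^k-1}{d}} X^{p^k},
\]
which is the claimed formula, and $\hat{J}_{X,1}$ is the formal group over $\Z_p$ with this logarithm.

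I do not anticipate any serious obstacle: the deep analytic content (identifying $\vec{l}$ with the logarithm of a formal group strictly isomorphic to $\hat{J}_X$, via Honda's theory) was already packaged into Theorem \ref{thm:fgl}, and the diagonalization afforded by the $\delta$-action is immediate. What needs care is only the bookkeeping: ensuring that the matrix $\mathbf{e}^{(k)}$ and the combinatorial expansion are organized so that the index ``$i=1$'' in \eqref{eq:log-fgl-diagonal2} corresponds to the component of weight $\mu_1 = 1$ under the $\delta$-action, which is transparent from \eqref{eq:explicitdecomp} since each $x(T)^i T$ has weight $1$ modulo $d$.
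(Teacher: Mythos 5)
Your proposal follows the paper's own route exactly: compute $e_{11}^{(k)}=\binom{mb}{m}$ with $m=(p^k-1)/d$ from the explicit decomposition \eqref{eq:explicitdecomp}, and then invoke Theorem \ref{thm:fgl} together with Remark \ref{rem:log-of-fgl-diag} (whose hypotheses you correctly verify: $K=\Q_p$ is absolutely unramified, $p\geq d+1=2g+2>2g$, $Y$ is ordinary and $WG_{\infty}(X)=WG_{\overline{\infty}}(Y)$ as in the proof of Theorem \ref{thm:main1+}, and $\mathbf{e}^{(k)}$ is diagonal since $d=2g+1>\mu_g$). The only difference is that you spell out the hypothesis checks that the paper leaves implicit, which is harmless.
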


\begin{remark}\label{rem:fermatordinary}
In the proof of Theorem \ref{thm:main1},
we made a use of a result in \cite{Gonzalez}.
However, what we actually needed is
$e^{(1)}_{11} \in \Z_{(p)}^*$,
and this follows from \eqref{eq:explicitdecomp}
\end{remark}

\subsection{Other examples}\label{sect:other-ex}

We collect a few more examples for which
Theorem \ref{thm:stronger-main} can be applied.
All results can be proved by the same way
as Theorem \ref{thm:main1} and Proposition \ref{prop:fermatformalgroup},
hence the proofs are omitted.

\begin{example}
(Compare \cite{MY}.)
Let $g \geq 2$ be an integer such that $p \equiv 1 \mod d:=4g$.
Let $X$ be 
the hyperelliptic curve of genus $g$ over $\Q_p$ defined by 
$y^2 = x^{2g+1}+x$.
Let $\infty$ be the unique point on $X$ 
which does not lie above its affine part.
Fix a primitive $d$-th root of unity 
$\zeta_{d} \in \Q_p^*$,
and define an automorphism $\delta$ of $X$ by
$\delta(x, y) = (\zeta_d^{2} x, -\zeta_d y)$. 
The order of $\delta$ is $d$.
Then we have the following for any $n \in \Z_{>0}$:
\begin{enumerate}
\item
It holds
$\Jac(X)[p^n]_1 \cap \Theta  = \{ 0 \}$.
\item
The formal power series
\[ l_1(X) = 
\sum_{k=0}^{\infty} 
\frac{1}{p^k} 
\begin{binom}
{\frac{p^k-1}{2}}
{\frac{p^k-1}{4g}}
\end{binom}
\]
is the logarithm function of 
a formal group over $\Z_p$, which is isomorphic  to $\hat{J}_{X, 1}/\Z_p$.
\end{enumerate}
\end{example}

\begin{example}
Let $l$ be a prime such that $p \equiv 1 \mod d:=l(l+1)$.
Let $X$ be 
the smooth projective model of an affine curve 
over $\Q_p$ defined by 
$y^l = x^{l+1}+1$.
The genus of $X$ is $g=l(l-1)/2$.
Let $\infty$ be the unique point on $X$ 
which does not lie above its affine part.
Fix a primitive $d$-th root of unity 
$\zeta_d \in \Q_p^*$,
and define an automorphism $\delta$ of $X$ by
$\delta(x, y) = (\zeta_d^{l}x, \zeta_d^{l+1} y)$. 
The order of $\delta$ is $d$.
Then we have the following for any $n \in \Z_{>0}$:
\begin{enumerate}
\item
It holds
$\Jac(X)[p^n]_1 \cap \Theta  = \{ 0 \}$.
\item
The formal power series
\[ l_1(X) = 
\sum_{k=0}^{\infty} 
\frac{1}{p^k} 
\begin{binom}
{\frac{p^k-1}{l}}
{\frac{p^k-1}{l+1}}
\end{binom}
\]
is the logarithm function of 
a formal group over $\Z_p$, which is isomorphic  to $\hat{J}_{X, 1}/\Z_p$.
\end{enumerate}
\end{example}

\begin{example}
Let $l$ be an odd prime such that $p \equiv 1 \mod d:=2l$.
Let $X$ be 
the smooth projective model of an affine curve 
over $\Q_p$ defined by 
$y^l  = x^{2a}(x^2+1)^b$, where 
$a, b$ are positive integers
such that $2(a+b)=l+1$.
The genus of $X$ is $g=l-1$.
Let $\infty$ be the unique point on $X$ 
which does not lie above its affine part.
Fix a primitive $l$-th root of unity 
$\zeta_l \in \Q_p^*$,
and define an automorphism $\delta$ of $X$ by
$\delta(x, y) = (-x, \zeta_l y)$. 
The order of $\delta$ is $d$.
Then we have the following for any $n \in \Z_{>0}$:
\begin{enumerate}
\item
It holds
$\Jac(X)[p^n]_1 \cap \Theta  = \{ 0 \}$.
\item
The formal power series
\[l_1(X) = 
\sum_{k=0}^{\infty} 
\frac{1}{p^k} 
\binom
{\frac{p^k-1}{l} b}
{\frac{p^k-1}{l} (2b-1)}
\]
is the logarithm function of 
a formal group over $\Z_p$, which is isomorphic  to $\hat{J}_{X, 1}/\Z_p$.
\end{enumerate}
\end{example}

\vspace{5mm}

\noindent
{\it Acknowledgement.} 
Part of this work was done while 
the authors were visiting 
National Center for Theoretical Sciences 
in Hsinchu, Taiwan.
We would like to thank Professor Yifan Yang and
the institute for their hospitality.

%\subsection{The reduction map and the $p$-adic loop group}
%Recall that $A$ and $B$ are defined in 
%\eqref{eq:def-of-a-and-b}.
%Set $\Gr^{\int}_A(K) := \Gr_A^{\alg}(K) \cap \Gr^{\alg, \int}(K).$
%We have a commutative diagram
%\begin{equation}\label{eq:an-pic}
%\begin{matrix}
%\Gr^{\alg, \int}_A(K) & \overset{\red}{\to} & \Gr^{\alg}_B(\F) \\
% {}_{[ \cdot ]_A} \downarrow  & & \downarrow {}_{[ \cdot ]_B} \\
% \Pic(X) & \overset{\eqref{eq:pic-reduction}}{\to} & \Pic(Y).
%\end{matrix}
%\end{equation}
%
%
%By the action of
%$\Gamma$ on $\Gr_A^{\alg, \int}(K)/\sim$
%(see \eqref{eq:action1})
%and Krichever correspondence
%(see Theorem \ref{thm:krichever}),
%we get a well-defined injective homomorphism
%\begin{equation}\label{eq:gammaandjac}
%\Gamma_A \to \Jac(X), \qquad [h(T)] \mapsto [h(T)A]_A.
%\end{equation}
%which is injective by 
%\eqref{eq:kernel-of-lpgp-action}.
%
%
%\begin{problem}
%\begin{enumerate}
%\item 
%Characterize the image of
%the left vertical map in \eqref{eq:an-pic}.
%\item
%Characterize the image of \eqref{eq:gammaandjac}.
%A naive guess is that it is 
%related with the formal group $\hat{J}_X$ of $J_X$.
%(See Theorem \ref{thm:fgl}.)
%\end{enumerate}
%\end{problem}
%


\begin{thebibliography}{1}


\bibitem{Anderson}
G.~W. Anderson.
Torsion points on Jacobians of quotients of Fermat curves 
and $p$-adic soliton theory. 
Invent. Math. 118 (1994), no. 3, 475--492.

%\bibitem{Fisher}
%B.~Fisher. 
%A note on Hensel's lemma in several variables. 
%Proc. Amer. Math. Soc. 125 (1997), no. 11, 3185--3189.

%\bibitem{Fulton}
%W.~Fulton. 
%Intersection theory. Second edition. 
%Ergebnisse der Mathematik und ihrer Grenzgebiete. 3. Folge. 
%Springer-Verlag, Berlin, 1998.


\bibitem{C}
R.~F.  Coleman, 
Ramified torsion points on curves. Duke Math. J. 54 (1987), no. 2, 615--640. 

\bibitem{GS}
R.~ Gerritzen and M.~ van der Put. 
Schottky Groups and Mumford Curves.
Lecture Notes in Math. 817, Springer-Verlag, 1980.

\bibitem{Gonzalez}
J.~Gonz\'alez. 
Hasse-Witt matrices for the Fermat curves of prime degree. 
Tohoku Math. J. (2) 49 (1997), no. 2, 149--163.

\bibitem{Goss}
D.~Goss. 
Basic structures of function field arithmetic. 
Ergebnisse der Mathematik und ihrer Grenzgebiete, Volume 35.
Springer-Verlag, Berlin, 1996.

\bibitem{Grant}
D.~Grant.
Torsion on theta divisors of hyperelliptic Fermat Jacobians. 
Compos. Math. 140 (2004), no. 6, 1432--1438.


%\bibitem{Granville}
%A.~Granville.
%Arithmetic properties of binomial coefficients. I. 
%Binomial coefficients modulo prime powers. 
%Organic mathematics (Burnaby, BC, 1995), 253--276, 
%CMS Conf. Proc., 20, Amer. Math. Soc., Providence, RI, 1997.

\bibitem{Hartshorne} 
R. Hartshorne.
Algebraic geometry. 
Graduate Texts in Mathematics, No. 52. Springer-Verlag, 1977.


\bibitem{HW} 
H.~ Hasse, E.~Witt. 
Zyklische unverzweigte Erweiterungsk\"orper vom 
Primzahlgrad $p$ \"uber einem algebraischen Funktionenk\"orper der 
Charakteristik $p$. 
Monatshefte f. Math. und Phys. 43 (1936), 477-492

\bibitem{Homma} 
M.~Homma. 
Automorphisms of prime order of curves. 
Manuscripta Math. 33 (1980/81), no. 1, 99--109. 

\bibitem{Ho} T.~Honda. 
On the theory of commutative formal groups. J. Math. Soc. Japan 22 (1970), 213--246. 


\bibitem{Ichikawa} 
T.~ Ichikawa, 
$p$-adic theta functions and solutions of the KP hierarchy. 
Commun. Math. Phys. 176, 383--399 (1996).

\bibitem{Ka} 
N.~Katz. 
Crystalline cohomology, Dieudonn\'e modules, and Jacobi sums. 
Automorphic forms, representation theory and arithmetic (Bombay, 1979), pp. 165--246, Tata Inst. Fund. Res. Studies in Math., 10, Tata Inst. Fundamental Res., Bombay, 1981.


\bibitem{Lang}
S.~ Lang.
Elliptic functions. 
Graduate Texts in Mathematics, 112. Springer-Verlag, 1987.



\bibitem{Lewittes}
J.~ Lewittes.
Automorphisms of compact Riemann surfaces. 
Amer. J. Math. 85 1963 734--752

%\bibitem{Liu}
%Q.~Liu, Qing. 
%Algebraic geometry and arithmetic curves. 
%Oxford Graduate Texts in Mathematics, 6. 
%Oxford University Press, Oxford, 2002.


\bibitem{MY}
Y.~Miyasaka and T.~Yamazaki.
Torsion points on hyperelliptic Jacobians 
via Anderson's $p$-adic soliton theory.
Tokyo J. Math., 36 (2013), 387-403.

\bibitem{Mumford}
D.~Mumford.
An algebro-geometric construction of
commuting operators and solutions to the Toda
lattice equation, KdV equation and related nonlinear equations. 
International Symposium on Algebraic Geometry (Kyoto, 1977) 
(M. Nagata, ed.), Kinokuniya, Tokyo, 1978, pp. 115--53.

\bibitem{Mumford2}
D.~ Mumford. 
An analytic construction of degenerating curves 
over complete local rings. 
Compositio Math. 24 (1972) 129--174.


\bibitem{norman}
P.~ Norman, $p$-adic theta functions, Amer. J. Math. 107 (1985). 
no. 3, 617--661.


\bibitem{Raynaud}
M.~Raynaud. 
Sous-vari\'et\'es d'une vari\'et\'e ab\'elienne et points de torsion.
Arithmetic and geometry, Vol. I, 327--352, 
Progr. Math., 35, Birkh\"auser Boston, Boston, MA, 1983.

\bibitem{rego}
C.~J.~Rego,
The compactified Jacobian.
Ann. Sci. \'Ecole Norm. Sup. 13 (4), no. 2, 211--223 (1980). 


\bibitem{sato}
M.~ Sato and Y.~ Sato,
Soliton equations as dynamical systems 
on infinite-dimensional Grassmann manifold. 
Nonlinear partial differential equations in applied science (Tokyo, 1982), 
259--271, 
North-Holland Math. Stud., 81, North-Holland, Amsterdam, 1983.

\bibitem{SW} 
G.~Segal and G.~Wilson,
Loop groups and equations of KdV type.
Inst. Hautes \'Etudes Sci. Publ. Math. 61, 5--65 (1985). 


\bibitem{Stohr-Viana}
K.-O. ~St\"ohr and P.~ Viana.
A study of Hasse-Witt matrices by local methods. 
Math. Z. 200 (1989), no. 3, 397--407. 

\bibitem{Tamagawa} A.~Tamagawa, 
Ramification of torsion points on curves with ordinary semistable Jacobian varieties. Duke Math. J. 106 (2001), no. 2, 281--319. 



\bibitem{Tate}
J.~T. Tate. 
$p$-divisible groups. 
Proc. Conf. Local Fields (1967), 158--183. 
Springer, Berlin


%\bibitem{Tzermias}
%P.~Tzermias. 
%The Manin-Mumford conjecture: a brief survey. 
%Bull. London Math. Soc. 32 (2000), no. 6, 641--652.

%\bibitem{VillaSalvador}
%G.~D.~ Villa Salvador.
%Topics in the theory of algebraic function fields. 
%Mathematics: Theory \& Applications. 
%Birkh\"auser Boston, Inc., Boston, MA, 2006.


\end{thebibliography}
\end{document}